\newtheorem{theorem}{Theorem}[section]
\newtheorem{proposition}[theorem]{Proposition}
\newtheorem{lemma}[theorem]{Lemma}
\newtheorem{corollary}[theorem]{Corollary}
\newtheorem{theo}{Theorem}
\newtheorem{cor}{Corollary}
\theoremstyle{definition}
\newtheorem{definition}[theorem]{Definition}
\newtheorem{remark}[theorem]{Remark}
\newtheorem{assumption}[theorem]{Assumption}
\newcommand{\nbiga}{\mathcal{A}}
\newcommand{\nbigb}{\mathcal{B}}
\newcommand{\nbigc}{\mathcal{C}}
\newcommand{\nbigd}{\mathcal{D}}
\newcommand{\nbigp}{\mathcal{P}}
\newcommand{\nbigctilde}{\widetilde{\nbigc}}
\newcommand{\alphatilde}{\widetilde{\alpha}}
\newcommand{\betatilde}{\widetilde{\beta}}
\newcommand{\gammatilde}{\widetilde{\gamma}}
\newcommand{\delbar}{\overline{\del}}
\newcommand{\cnum}{{\mathbb C}}
\newcommand{\vecv}{{\boldsymbol v}}
\newcommand{\vecu}{{\boldsymbol u}}
\newcommand{\lrarr}{\longrightarrow}
\def\tr{\mathop{\rm tr}\nolimits}
\def\Hom{\mathop{\rm Hom}\nolimits}
\def\Re{\mathop{\rm Re}\nolimits}
\numberwithin{equation}{section}
\newcommand{\wt}[1]{\widetilde{#1}}
\DeclareMathOperator{\SL}{SL}
\DeclareMathOperator{\Tr}{tr}
\newcommand{\dvector}[1]{{\left(\begin{matrix}#1\end{matrix}\right)}}
\DeclareMathOperator{\del}{\partial}
\newcommand{\R}{\mathbb{R}}
\newcommand{\C}{\mathbb{C}}
\newcommand{\CP}{\C P}
\newcommand{\N}{\mathbb{N}}
\newcommand{\Z}{\mathbb{Z}}
\begin{document}

\title[Holomorphic $\mathfrak{sl}(2,\C)$-systems with Fuchsian monodromy]{Holomorphic $\mathfrak{sl}(2,\C)$-systems with Fuchsian monodromy (with an appendix by Takuro Mochizuki)}

\author[I. Biswas]{Indranil Biswas}

\address{School of Mathematics, Tata Institute of Fundamental Research,
Homi Bhabha Road, Mumbai 400005, India}

\email{indranil@math.tifr.res.in}

\author[S. Dumitrescu]{Sorin Dumitrescu}

\address{Universit\'e C\^ote d'Azur, CNRS, LJAD, France}

\email{dumitres@unice.fr}

\author[L. Heller]{Lynn Heller}
\address{Institute of Differential Geometry,
Leibniz Universit\"at Hannover,
Welfengarten 1, 30167 Hannover}
\email{lynn.heller@math.uni-hannover.de}

\author[S. Heller]{Sebastian Heller}

\address{Institute of Differential Geometry,
Leibniz Universit\"at Hannover,
Welfengarten 1, 30167 Hannover}

\email{seb.heller@gmail.com}

\address{Research Institute for Mathematical Sciences,
Kyoto University, Kyoto 606-8502, Japan}

\email{takuro@kurims.kyoto-u.ac.jp}

\subjclass[2020]{34M03, 34M56, 14H15, 53A55}

\keywords{Fuchsian representation, holomorphic connection,
parabolic bundle, abelianization, WKB analysis}

\date{\today}

\begin{abstract}
For every integer $g \,\geq\, 2$ we show the existence of a compact Riemann surface $\Sigma$ of genus $g$
such that the rank two trivial holomorphic vector bundle ${\mathcal O}^{\oplus 2}_{\Sigma}$ admits
holomorphic connections with  $\text{SL}(2,{\mathbb R})$  monodromy and maximal Euler class. Such a monodromy representation
is known to coincide with the Fuchsian uniformizing representation for some Riemann surface of genus $g$. The construction carries over to all very stable and compatible real holomorphic structures for the topologically trivial rank two bundle over $\Sigma$ and gives the existence of  holomorphic connections with Fuchsian monodromy in these cases as well.
\end{abstract}
\maketitle

\tableofcontents

\section*{Introduction}\label{se1}

For a compact  Riemann surface $\Sigma$ the holomorphic Riemann-Hilbert correspondence associates to every pair 
$(V, \, \nabla)$, consisting of a (flat) holomorphic connection $\nabla$  on a holomorphic vector bundle $V$ over $\Sigma$, 
its monodromy homomorphism. This is an equivalence of categories (see for instance \cite{De} or 
\cite[p. 544]{Ka}). For surfaces with nonabelian fundamental group finding holomorphic connections with prescribed monodromy
behavior is notoriously 
difficult and an obstacle to a deeper understanding of various mathematical problems
ranging from 
algebraic geometry and number theory, over
geometric structures on manifolds  \cite{AQ, DM, Th},  to constructions
in quantum field theories and mirror symmetry \cite{AGM,W,GMN,FGuTe} and to the  theory of  harmonic maps and minimal surfaces \cite{hitchin, Wo, HHSch,Tr}.

In this paper we restrict to the case of $\text{SL}(2, {\mathbb C})$--connections over compact Riemann surfaces of
genus $g\geq2$. This case is of particular interest as it is deeply linked to the geometry of the underlying
 surface. Starting from the XIXth century mathematicians have investigated group representations appearing as 
monodromy of solutions to algebraic differential equations on the complex domain.  
The relationship to geometry stems from the fact that the inverse of solutions to certain linear differential equations parametrize the Riemann surface. As discovered by Poincar\'e and Klein 
(see \cite{StG} for a historical survey of the subject), every Riemann surface  can be realized as a quotient of the hyperbolic plane $\mathcal H^2$ by a Fuchsian group 
(a torsion-free, discrete, and cocompact subgroup of $\text{PSL}(2, \R)$) identifying 
  the space of Fuchsian representations with the Teichm\"uller space. Lifting Fuchsian representations from PSL$(2, \R)$ to SL$(2, \R)$,  they 
can be considered as   monodromy representations of holomorphic
 SL$(2, \C)$-connections on a fixed Riemann surface $\Sigma$ via Riemann-Hilbert correspondence.
  The holomorphic structure on the rank two vector bundle given by the uniformization of $\Sigma$ is the unique nontrivial extension of $L^{-1}$ by $L$, where $L$ is a theta characteristic on
$\Sigma$  \cite{Gu}. This bundle will be referred to as the uniformization bundle.
Note that Fuchsian representations  are    SL$(2, \R)$-representations with maximal Euler class  $g-1$. 
 This gives $2^{2g}$ connected components with maximal Euler class in the space of SL$(2, \R)$-representations 
corresponding to the different choices of the  theta characteristic \cite{Mi,Hi1,Gold88}.

In this context it is natural  to ask which holomorphic rank two bundles over a given Riemann surface $\Sigma$ admit holomorphic connections with Fuchsian monodromy representations. Indeed, this question was first raised by Katz in \cite[p.~555--556]{Ka} (where the question is attributed to Bers) in 1978 and is still unsolved. 
On the other hand, the analogue to Bers' question for the compact group ${\rm SU}(2)$ is fully understood. 
The celebrated 
Narasimhan-Seshadri Theorem shows that every stable holomorphic structure admits a unique compatible flat 
connection with irreducible unitary monodromy and vice versa.
Motivated by problems in algebraic geometry and number theory, e.g., Weil conjecture,
a related question 
of realizing Fuchsian representations as the monodromy homomorphism of  regular 
singular $\text{SL}(2, {\mathbb C})$-connections on the uniformization bundle over (marked) Riemann surfaces was addressed by Faltings 
\cite{Fa}. 
Remarkably, even when restricting to the trivial rank two holomorphic bundle, it was previously unknown  whether a holomorphic  connection $\nabla$   with Fuchsian monodromy representation exists. 
 This is the main question to be addressed in the present article. We prove

\begin{theo}[Main Theorem]\label{thi}
For  every  $k\,\in\,\N^{\geq3}$ there exists a (hyperelliptic) Riemann surface $\Sigma_k$ of genus $k-1$ such that
the trivial holomorphic rank two  bundle 
admits infinitely many holomorphic connections
with Fuchsian monodromy representation.
\end{theo}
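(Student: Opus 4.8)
\noindent\emph{Strategy of proof.}
The plan is to produce the connection in the form $\nabla \,=\, d + A$, where $d$ is the trivial connection on $\mathcal{O}^{\oplus 2}_{\Sigma_k}$ and $A \,\in\, H^0(\Sigma_k,\, K\otimes\mathfrak{sl}(2,\C))$ is a traceless holomorphic $\mathrm{End}$-valued one-form, $K$ being the canonical bundle. On a curve every such $\nabla$ is automatically flat, so the holomorphic connections on the trivial bundle form the finite-dimensional affine space $H^0(\Sigma_k,\, K\otimes\mathfrak{sl}(2,\C))$, and the whole problem is to steer the monodromy map into the Fuchsian locus of the $\SL(2,\C)$-character variety. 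This is genuinely delicate, since the de Rham representative of a Fuchsian representation lives a priori on the (nontrivial) uniformization bundle rather than on $\mathcal{O}^{\oplus 2}_{\Sigma_k}$: realizing it on the trivial bundle amounts to finding, inside the complex gauge orbit of the uniformization connection, a member whose $(0,1)$-part is the trivial holomorphic structure. To make this tractable I would fix $\Sigma_k$ to be a hyperelliptic curve of genus $k-1$ with a large automorphism group, for instance $y^2 \,=\, x^{2k}-1$, carrying the hyperelliptic involution, a cyclic symmetry of order $k$, and an anti-holomorphic involution $\tau$, and then require $A$ to be equivariant for these symmetries. Equivariance collapses the parameter space to a small explicit family, while $\tau$-compatibility imposes a reality structure forcing the monodromy into a real form of $\SL(2,\C)$, whose signature is arranged to be $\SL(2,\R)$.

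To compute and control the monodromy I would abelianize. The spectral curve $\hat\Sigma \,=\, \{\lambda^2 + \det A \,=\, 0\}$, inside the total space of $K$, is a double cover of $\Sigma_k$ branched over the zeros of the quadratic differential $\det A \,\in\, H^0(\Sigma_k,\, K^2)$, and over $\hat\Sigma$ the system diagonalizes into rank one data away from the turning points. I would then introduce the scaling family $\nabla_\hbar \,=\, d + \hbar^{-1}A$ and run (exact) WKB as $\hbar \to 0$: to leading order the holonomy along a loop $\gamma$ is the abelian period $\exp(\hbar^{-1}\oint_\gamma \lambda)$, corrected by Stokes matrices at the turning points, so the rank two monodromy is reconstructed from the period map of $(\hat\Sigma,\lambda)$ together with the combinatorics of the associated spectral network. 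The symmetry of $\Sigma_k$ should render these periods explicitly computable through the quotient $\Sigma_k/(\Z/k)$, on which the equation degenerates to hypergeometric type.

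A representation is Fuchsian precisely when it is discrete, faithful, $\SL(2,\R)$-valued and of maximal Euler class $k-2$. Under abelianization these conditions become, respectively, reality of the periods (guaranteed by $\tau$-equivariance), a quantization condition fixing the Euler number, and the requirement that the reconstructed rank two bundle be holomorphically trivial (a constraint on the eigenline bundle over $\hat\Sigma$). I expect the quantization condition to be solvable along a discrete sequence $\hbar_n \to 0$; each $\hbar_n$ then yields a distinct traceless holomorphic field $A_n \,=\, \hbar_n^{-1}A$, and hence a distinct holomorphic connection $d + A_n$ on the same trivial bundle, which accounts for the infinitely many connections claimed.

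The main obstacle, which I expect to be the genuinely hard part, is twofold and intertwined. First, upgrading the leading-order WKB monodromy to the exact global representation requires uniform analytic control of the subleading corrections and of the Stokes structure along the whole family; this is exactly where the hypothesis of a very stable (and compatible real) holomorphic structure enters, making the spectral correspondence proper and the relevant harmonic and Stokes data exist and vary well, and where the estimates of Mochizuki's appendix are indispensable, the trivial bundle being handled as the distinguished symmetric case to which the construction specializes. Second, one must check that the connection satisfying the reality and quantization conditions is simultaneously the one whose $(0,1)$-part trivializes the bundle; reconciling the abelian period conditions that pin down the $\SL(2,\R)$ monodromy and its Euler class with the triviality of the rank two bundle is the crux, and I would attack it by a transversality count after the symmetry reduction, verifying that the Fuchsian locus is met transversally so that the discrete family of solutions genuinely persists.
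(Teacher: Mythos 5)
Your overall frame---seek the connection in the form $d+A$ on the trivial bundle, exploit symmetries of a special hyperelliptic curve, scale the Higgs field and run WKB along a ray, and use an anti-holomorphic involution to force real monodromy---is genuinely the same skeleton as the paper's argument. But three of the steps you flag as ``the hard part'' are not merely hard: as you have set them up they have no proof mechanism, and the paper resolves each one by a device that is absent from your proposal. First, the triviality of the holomorphic structure is not something to be ``reconciled a posteriori by a transversality count.'' The paper builds it in from the start: the base family lives on $\mathcal S_4$ as $D+\tau\Phi$, where $D$ is the explicit \emph{reducible} logarithmic connection \eqref{eq:D} whose pullback under the totally branched $k$-fold cover $f_k$ is gauge equivalent to the de Rham differential $d$ (Proposition \ref{red}), and $\Phi$ is its strongly parabolic Higgs field, whose pullback extends to a holomorphic Higgs field on $\mathcal O^{\oplus 2}_{\Sigma_k}$ (Lemma \ref{strong-pb}). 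So every member of the pulled-back family is of the form $d+\tau A$ on the trivial bundle by construction; no transversality statement is needed, and without such a construction your transversality count has nothing to bite on (the Fuchsian locus and the locus of connections trivializing the bundle are not known to meet transversally --- that is essentially Bers' open question).

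Second, your reality mechanism is inverted. Equivariance under the anti-holomorphic involution does \emph{not} force the monodromy into $\SL(2,\R)$: after descending to the one-punctured square torus via Theorem \ref{thm:4:1}, the involution $\eta$ only makes two of the three Fricke coordinates $z_1=\Tr(YX)$ and $z_2=\Tr(Y^{-1}X)$ real (Lemma \ref{tausymcon2}); the third trace $x(t)$ is genuinely complex for generic $t$, and it is precisely the WKB asymptotics $x(t)\sim C\exp(t\pi\tfrac{1+\sqrt{-1}}{4})$ with $C\neq 0$ (Corollary \ref{cor;21.3.7.10}) whose rotating phase produces the discrete sequence $t_n$ along which $x(t_n)\in\R\setminus[-2,2]$, whence reality and hence the infinitude of connections (Corollary \ref{tn}). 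You attribute the discrete sequence to an ``Euler class quantization condition,'' which brings me to the third and most serious gap: you have no argument that the resulting $\SL(2,\R)$ representations have maximal Euler class. The Euler class is locally constant, so the paper proves maximality by a connectedness argument: Goldman's classification of the five real components of the one-punctured-torus character variety (Theorem \ref{goldman}), the fact that all four noncompact components map under the degree-4 correspondence of Theorem \ref{thm:4:1} into the single real component of $\mathcal M^{\wt\rho}_{0,4}$ containing the uniformizing connection $\wt\nabla$ (Lemma \ref{lem:FFuchs}, Remark \ref{4realcomponents}), and the compatibility of connected components with pullback by $f_k$ (Propositions \ref{FRS} and \ref{component-pull-back}). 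Maximality of the Euler class then gives discreteness and faithfulness for free by Goldman's theorem; these are not independent conditions to verify as your proposal suggests. Without a substitute for this component-tracking argument, your construction could just as well land in one of the non-maximal real components, and indeed the paper notes that related constructions in \cite{BDH} produce real monodromy that is never of maximal Euler class.
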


 A major difference to Narasimhan-Seshadri Theorem when considering the split real group 
${\rm SL}(2,\R)$  is that uniqueness fails, e.g., our Main Theorem 
shows the existence of infinitely many holomorphic 
connections with Fuchsian monodromy on the trivial holomorphic bundle. Likewise, for the holomorphic structure given by the 
uniformization bundle, the infinitely many holomorphic connections with Fuchsian monodromy correspond to integral graftings, see  \cite{Mas,Hej, SuT,Fa,Gold87}.
Although other holomorphic bundles with holomorphic connections with Fuchsian monodromy do exist, no explicit example other than 
the uniformization bundle itself were found.

Our Main Theorem \ref{thi} is in fact a consequence of an additional real symmetry of the considered Riemann surface $\Sigma_k$. Therefore, the proof carries over verbatim to  all very stable holomorphic structures -- i.e., their (non-zero) Higgs fields are not nilpotent-- on the topologically trivial rank two bundle compatible with the construction and with the real symmetry of the Riemann surface (as specified in Lemma \ref{tausymcon}). The space of these real holomorphic structures can be identified with 
a circle with a single point removed in a projective line.
An immediate corollary is

\begin{cor}\label{cor1}
For  every  $k\,\in\,\N^{\geq3}$ there exists a (hyperelliptic) Riemann surface $\Sigma_k$ of genus $k-1$ such that
all very stable and compatible real holomorphic structures of the topologically trivial rank two bundle over $\Sigma_k$ 
admit  infinitely many holomorphic connections
with Fuchsian monodromy representation.
\end{cor}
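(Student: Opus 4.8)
The plan is to derive Corollary~\ref{cor1} from Theorem~\ref{thi} by observing that nowhere in the construction of the Fuchsian connections is the \emph{triviality} of ${\mathcal O}^{\oplus 2}_{\Sigma_k}$ as a holomorphic bundle actually used; what is used are only two structural features of the holomorphic structure. First, it must be \emph{real}, i.e.\ compatible with the anti-holomorphic involution $\tau$ of $\Sigma_k$ supplied by the extra symmetry of the hyperelliptic surface, so that the $\tau$-equivariance of the whole family of connections forces the monodromy to land in $\text{SL}(2,\R)$. Second, it must be \emph{very stable}, so that the relevant Higgs field is not nilpotent and the abelianization underlying the construction is available. Since the maximality of the Euler class is a topological, hence locally constant, invariant along a continuous family of flat connections, the resulting $\text{SL}(2,\R)$-representations remain of maximal Euler class, i.e.\ Fuchsian, throughout.

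Next I would identify the parameter space and run the argument fibrewise. By Lemma~\ref{tausymcon}, the holomorphic structures on the topologically trivial rank two bundle that are both $\tau$-real and compatible with the construction form a one-parameter real family: the $\tau$-reality constraint cuts out a circle inside a $\CP^1$, and deleting the single structure at which the Higgs field degenerates to a nilpotent one -- the unique point where very stability fails -- leaves a circle with one point removed, exactly the space named in the statement. For each point of this punctured circle the holomorphic structure is very stable and $\tau$-real, so the abelianization and WKB analysis of the Main Theorem apply verbatim: the spectral data deform continuously, the flat connections are produced as before, and the $\tau$-fixed-point argument again pins the monodromy inside $\text{SL}(2,\R)$ with maximal Euler class. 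The infinitude of such connections is inherited from the same mechanism that produces infinitely many of them in Theorem~\ref{thi}, which operates within each very stable fibre; hence every admissible real holomorphic structure again supports infinitely many holomorphic connections with Fuchsian monodromy.

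The step I expect to be the main obstacle is the uniformity invoked in the second paragraph: one must check that very stability and the WKB estimates that confirm Fuchsian (as opposed to merely $\text{SL}(2,\R)$) monodromy persist \emph{uniformly} along the whole punctured circle, rather than only at the trivial structure treated in the Main Theorem. Concretely this requires verifying that the nilpotency locus of the Higgs field meets the $\tau$-real circle in precisely one point, so that the remaining arc is open, connected and everywhere very stable, and that the WKB control is not lost as the parameter approaches the deleted point. This is where the explicit hyperelliptic geometry of $\Sigma_k$ and the precise compatibility condition of Lemma~\ref{tausymcon} carry the weight; no new idea beyond the proof of Theorem~\ref{thi} should be needed, but the estimates must be organized so as to hold with constants independent of the family parameter.
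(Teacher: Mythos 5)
Your proposal matches the paper's own proof: the paper likewise parametrizes the compatible real holomorphic structures by the two real lines of $\chi$ from Lemma \ref{tausymcon} (forming a circle minus the single wobbly point in a projective line) and reruns the proof of Theorem \ref{thi} verbatim with $\nabla^t_\chi$ in place of $\nabla^t$. The uniformity along the punctured circle that you flag as the main obstacle is not actually required, since the assertion is pointwise in the holomorphic structure and the WKB limit $t\to\infty$ together with the component argument is carried out for each fixed $\chi$ separately.
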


In a similar vein, Ghys raised the question about whether there is a pair $(\Sigma,\, 
\nabla)$ consisting of a compact Riemann surface of genus $g \,\geq\, 2$ and an irreducible holomorphic 
connection $\nabla$ on the rank two trivial holomorphic vector bundle 
such that the image of the monodromy homomorphism of $\nabla$ lies in a cocompact lattice 
of $\text{SL}(2, {\mathbb C})$. Such a pair would give rise to a nontrivial holomorphic map from the Riemann 
surface $\Sigma$ to the compact quotient of $\text{SL}(2, {\mathbb C})$ by that cocompact lattice. Constructing 
such holomorphic maps is also known as the Margulis problem (see \cite{CDHL} for the discussion about Ghys question and 
Margulis problem).

Motivated by the above question of Ghys, the authors of \cite{CDHL} initiated  a study of the Riemann-Hilbert 
correspondence for genus two surfaces and $\text{SL}(2, {\mathbb C})$--connections. Their main result  asserts that the 
Riemann-Hilbert monodromy mapping,  which associates to an irreducible holomorphic differential system its monodromy representation, 
is a local biholomorphism.
Then Theorem \ref{thi} and the result of \cite{CDHL} gives

\begin{cor}\label{cori}
There exists a nonempty open subset $\mathcal U$ of the Teichm\"uller space of
compact curves of genus $g\,=\,2$ such that every $\Sigma
\,\in\, {\mathcal U}$ possesses a holomorphic connection $\nabla({\Sigma})$ on ${\mathcal O}^{\oplus 2}_{\Sigma}$
with quasi-Fuchsian \footnote{A representation of a surface group is called quasi-Fuchsian,
if the  monodromy homomorphism is faithful and has discrete image in ${\rm SL}(2, {\mathbb C})$ admitting
a Jordan curve as limit set for its action on $\CP^1$. }
 monodromy representation.

Every curve $\Sigma\,\in\, \mathcal U$ therefore admits a nontrivial holomorphic map into the quotient of ${\rm SL}(2, {\mathbb 
C})$ by a quasi-Fuchsian group as the image of the monodromy homomorphism of $\nabla({\Sigma})$.
\end{cor}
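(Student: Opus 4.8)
The plan is to combine Theorem~\ref{thi} with the openness result of \cite{CDHL}, using a continuity/deformation argument to pass from genuinely Fuchsian monodromy to nearby quasi-Fuchsian representations. First I would specialize to genus $g=2$, i.e. take $k=3$ in Theorem~\ref{thi}, obtaining a single Riemann surface $\Sigma_3$ together with a holomorphic connection $\nabla_0$ on ${\mathcal O}^{\oplus 2}_{\Sigma_3}$ whose monodromy is Fuchsian, hence faithful, discrete, and $\SL(2,\R)$-valued. Such a Fuchsian representation is the boundary case of the quasi-Fuchsian locus inside the character variety $\mathrm{Hom}(\pi_1(\Sigma_3),\SL(2,\C))/\!/\SL(2,\C)$, and by the classical theory of Kleinian groups (Bers simultaneous uniformization, or simply the openness of the quasi-Fuchsian space) it admits an open neighborhood $\mathcal V$ consisting entirely of quasi-Fuchsian representations.

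The key step is to invoke the main theorem of \cite{CDHL}: the Riemann--Hilbert monodromy map, sending an irreducible holomorphic differential system on ${\mathcal O}^{\oplus 2}$ over a varying genus-two curve to its conjugacy class of monodromy representation, is a local biholomorphism. I would therefore take the point $(\Sigma_3,\nabla_0)$ in the domain of this monodromy map, note that its image is the Fuchsian (boundary) point, and use the local biholomorphism to produce an open neighborhood $\mathcal W$ of $(\Sigma_3,\nabla_0)$ that maps biholomorphically onto an open neighborhood of the Fuchsian point in the character variety. Shrinking so that the image lands inside the quasi-Fuchsian neighborhood $\mathcal V$, every pair $(\Sigma,\nabla)$ in $\mathcal W$ then has quasi-Fuchsian monodromy.

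Finally I would define $\mathcal U$ to be the image of $\mathcal W$ under projection to Teichm\"uller space, namely the set of underlying curves $\Sigma$ appearing in $\mathcal W$; this is open because the projection from the space of $(\Sigma,\nabla)$ to Teichm\"uller space is a submersion (the connections form fibers over each curve) and $\mathcal W$ is open. For each $\Sigma\in\mathcal U$ one selects the corresponding $\nabla(\Sigma)$, giving a holomorphic connection on the trivial bundle with quasi-Fuchsian monodromy. The last sentence is then immediate: a faithful representation with image in a quasi-Fuchsian group $\Gamma\subset\SL(2,\C)$ is, via the associated flat section into the homogeneous space, equivalent to an $\Gamma$-equivariant developing map $\widetilde\Sigma\to\SL(2,\C)$, which descends to the desired nonconstant holomorphic map $\Sigma\to\SL(2,\C)/\Gamma$; nontriviality follows from faithfulness of the monodromy.

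The main obstacle I anticipate is verifying that the irreducibility hypothesis of \cite{CDHL} genuinely applies at the Fuchsian point and persists in a neighborhood, and — more delicately — confirming that the monodromy map of \cite{CDHL} is set up over a family in which both the curve $\Sigma$ and the connection $\nabla$ are allowed to vary, so that its image is genuinely open in the full character variety rather than in a slice at fixed $\Sigma$. One must ensure that the Fuchsian representation produced by Theorem~\ref{thi} is an \emph{interior} boundary point of the quasi-Fuchsian space whose every neighborhood meets the quasi-Fuchsian open set; this is standard for surface groups but must be matched precisely with the dimension count and transversality underlying the local biholomorphism of \cite{CDHL}.
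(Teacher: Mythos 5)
Your proposal follows essentially the same route as the paper: specialize Theorem~\ref{thi} to $k=3$, take a quasi-Fuchsian open neighborhood $\mathcal V$ of the Fuchsian monodromy point, pull it back through the local biholomorphism of \cite{CDHL} to get $\mathcal W$, project to Teichm\"uller space, and obtain the holomorphic map to $\mathrm{SL}(2,\C)/\Gamma$ from the equivariant parallel frame on the universal cover. The only quibble is terminological: a Fuchsian representation is itself quasi-Fuchsian (its limit set is a round circle, hence a Jordan curve), so it is an interior point of the quasi-Fuchsian locus rather than a ``boundary case,'' which makes the existence of $\mathcal V$ immediate.
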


Theorem \ref{thi} and Corollary  \ref{cori} are geometrization results through 
holomorphic $\text{SL}(2, {\mathbb C})$--connections on the trivial bundle instead of the usual hyperbolic or 
Bers simultaneous uniformization for quasi-Fuchsian representations. It should be mentioned that, in higher 
Teichm\"uller spaces, geometrizations results for representations of fundamental group of surfaces into Lie groups 
is currently a very lively and dynamic field of research (see for instance \cite{BIW,GW,La} and references therein).

\section*{Strategy}
We show the existence of holomorphic connections with Fuchsian monodromy representation for particular hyperelliptic surfaces $\Sigma_k$ of genus $(k-1)$ given by a   totally branched   $k$-fold covering $f_k$ of ${\mathcal S}_4$ -- the complex projective line with four 
marked points $\pm1,\, \pm \sqrt{-1}$. On $\Sigma_k$  there are two connections of particular interest; the trivial de Rham differential $d$ and the uniformizing connection $\nabla^U$ of $\Sigma_k$. Both connections can be realized, modulo singular gauge transformations, as the pull-back  of 
the logarithmic connections $D$ (Proposition 
\ref{red}) and $\wt\nabla$ (Proposition \ref{FRS})  by $f_k$. 
Our aim is to deform $D$ by a parabolic Higgs field such that the new connection has real monodromy, lies in the connected component of $\wt \nabla$ and pulls back to $\Sigma_k$ as a holomorphic connection (without singularities). 
 
The moduli space of logarithmic connections on $\mathcal S_4$ has a natural set 
of coordinates given by the abelianization procedure \cite{HeHe}. These coordinates determine logarithmic 
connections on $\mathcal S_4$ as a twisted push forward of flat line bundle connections on the torus $\Sigma_2$ obtained by the 
branched double cover $f_2$ of $\mathcal S_4$. The twist is given by some 
meromorphic off-diagonal 1-forms determined by the flat line bundle and the eigenvalues of the 
residues. We restrict to the most symmetric case, where the behavior of the logarithmic connection at every 
marked point of $\Sigma_2$ is the same. More precisely, we consider connections on the torus $\Sigma_2$ with four marked points 
that descend to connections on the torus $T^2$ with only one marked point by taking the quotient with respect to its
half-lattice. In this way Theorem \ref{thm:4:1} identifies the moduli space of logarithmic connections on $T^2$ with the moduli space of logarithmic connections on $\mathcal S_4$. Moreover, $D$ 
is identified with a  connection $\wt D$ on the torus $T^2$ with one marked point in Lemma \ref{abelred}.

The crucial idea is to consider the asymptotic behavior of the family of connections 
$$ \wt D + t \wt\Phi\,,$$
where $t \,\in \,\R$ and $\wt\Phi$ is a specific parabolic Higgs field of $\wt D.$
By Theorem \ref{thm:4:1}, this family
corresponds to $\nabla^t = D+t\Phi$ on $\mathcal S_4,$ where $\Phi$ is the corresponding parabolic Higgs field of $D$.
By construction all connections $f_k^*\nabla^t$ have the same underlying holomorphic structure, namely the trivial one induced by the de Rham differential $d.$ For $t$ large we then use 
WKB analysis and an additional real involution of the torus (Lemma \ref{tausymcon2})
to ensure the existence of 
a sequence $(t_n)_{n\in \N} \,\subset\, \R$ such that $\nabla^{t_n}$ has real monodromy (Corollary \ref{tn}). 
The necessary WKB analysis result is proved by Takuro Mochizuki in the Appendix.

Since the pull-back under $f_k$ preserves the connected components of real representations, it remains to show that $\nabla^{t_n}$ lies in the same connected component as $\wt\nabla$ 
on $\mathcal S_4$. To do so, we compute that $\wt \nabla$ is also induced by 
a singular connection $\nabla^F$ on the one-punctured torus in Lemma \ref{lem:FFuchs}. The claim then follows 
from the fact that the four components of logarithmic connections with SL$(2,\R)$-monodromy on the one-punctured torus 
are mapped into the same real component of the moduli space on $\mathcal S_4$ via Theorem 
\ref{thm:4:1}. Therefore, the pull-back $f_k^*\nabla^{t_n}$ to $\Sigma_k$ is Fuchsian and has trivial holomorphic 
structure.

In fact, it is necessary to consider singular connections 
on the one-punctured torus, since there exists 3 other components of irreducible 
SL$(2,\R)$-representations on the four-punctured sphere, whose boundary contain reducible connections and do not lift to the Fuchsian 
component on $\Sigma_k$.
Related examples of irreducible holomorphic $\text{SL}(2, {\mathbb 
C})$--connections with real monodromy on the trivial holomorphic rank two bundle
were constructed in \cite{BDH}. However, these connections  are never of 
maximal Euler class. 

\section{Preliminaries: Logarithmic connections and parabolic bundles}\label{ss:parabolic}

Let $\Sigma$ be a compact connected Riemann surface; its holomorphic cotangent bundle is
denoted by $K_\Sigma$. An $\rm{SL}(2, \C)$--bundle on $\Sigma$ is a holomorphic rank two
vector bundle $V$ over $\Sigma$ with trivial determinant, i.e., the
line bundle $\det V\,=\, \bigwedge^2 V$ is holomorphically trivial.

Let ${\mathbf D}\,=\, p_1+\ldots +p_n$ be a divisor on $\Sigma$ with pairwise distinct points
$p_i \,\in\, \Sigma$. Consider a holomorphic $\rm{SL}(2, \C)$--bundle $V$ on $\Sigma$ together with its sheaf $\mathcal V$ of holomorphic sections and its Dolbeault operator  $\bar\partial.$
A
\textit{logarithmic} $\rm{SL}(2, \C)$--connection  $\nabla=\bar\partial+\partial^\nabla$ on $V$ with polar part
contained in $\mathbf D$ is given by a holomorphic differential operator
$$
\partial^\nabla\, :\, \mathcal V\, \longrightarrow\, \mathcal V\otimes K_\Sigma\otimes {\mathcal O}_\Sigma({\mathbf D})
$$
satisfying the Leibniz rule 
$$ \partial^\nabla(fs)\,=\, f\nabla(s)+ s\otimes df\,  $$
for all locally defined holomorphic sections $s$ of $V$ and locally defined holomorphic
functions $f$  on $\Sigma$, such that
the induced differential operator on $\det V$ coincides with the de Rham differential $d$ on ${\mathcal O}_\Sigma$.

Since $\Sigma$ is of complex dimension one, all logarithmic connections over $\Sigma$ are flat.
Moreover,  at every singular point $p_j$, $1\, \leq\, j\, \leq\, n$, of a logarithmic $\rm{SL}(2, \C)$--connection $\nabla$
on $V$ 
the residue \[\text{Res}_{p_j}(\nabla)\,\in\,\text{End}(V_{p_j})\]
is tracefree.

If the two eigenvalues $\lambda_{j,1},\, \lambda_{j,2}$ of the residue $\text{Res}_{p_j}(\nabla)$ do not differ
by an integer (this is known as the non-resonancy condition), then the local monodromy of $\nabla$ around $p_j$ 
is conjugate to the diagonal matrix with entries $\exp(-2\pi\sqrt{-1}\lambda_{j,1})$ and
$\exp(-2\pi\sqrt{-1}\lambda_{j,2})$ (see \cite[p.~53, Th\'eor\`eme 1.17]{De}).
If $\frac{1}{n_j}$ is an eigenvalue of the residue, with $n_j \,\geq\, 2$ an integer, the local
monodromy of $\nabla$ at $p_j$ is a rational rotation on the eigenlines.

Let $V$ be a holomorphic $\rm{SL}(2, \C)$--bundle on $\Sigma$. A parabolic structure $\mathcal P$ on $V$ with parabolic
divisor ${\mathbf D}\,=\, p_1+\ldots +p_n$ consists of quasiparabolic lines $L_j\, \subset\, V_{p_j}$
together with weights $\rho_j\, \in\, ]0,\, \tfrac{1}{2}[$ for
every $1\, \leq\, j\, \leq\, n$. For a holomorphic line subbundle $W\, \subset\, V$ the parabolic degree is given by
\[\text{par-deg}(W)\,:=\, {\rm degree}(W)+\sum_{j=1}^n \rho^W_j\, ,\]
where $\rho^W_j\,= \, \rho_j$ if $W_{p_j}\,=\,L_j$ and $\rho^W_j\,=\,
-\rho_j$ if $W_{p_j}\,\neq\, L_j$; see \cite{MS}, \cite{MY}.

\begin{definition}
A parabolic bundle $(V,\,\mathcal P)$ is called {\it stable} (respectively, {\it semistable}) if
$\text{par-deg}(W)\, <\, 0$ (respectively, $\text{par-deg}(W)\, \leq\, 0$)
for every holomorphic line subbundle $W\,\subset \, V$. A parabolic bundle will be called \textit{unstable} if it is not semistable.
\end{definition}

Take a non-resonant logarithmic $\rm{SL}(2, \C)$--connection $\nabla$ such that the eigenvalues of the residues
lie in $]-\tfrac{1}{2},\, \tfrac{1}{2}[$. It induces a parabolic structure on the underlying holomorphic
vector bundle $V$. The parabolic divisor is $\mathbf D\,=\, p_1 +\ldots + p_n$, where $p_j$ are the 
singular points of the connection. The parabolic weight $\rho_j$ at $p_j$ is the positive eigenvalue of
$\text{Res}_{p_j}(\nabla)$, and the quasiparabolic line at $p_j$ is the eigenline of $\text{Res}_{p_j}(\nabla)$
for the eigenvalue $\rho_j$.

Two non-resonant $\rm{SL}(2, \C)$--connection on $V$ with same  weights $\rho_j \in ]0, \tfrac{1}{2}[$ induce the same parabolic structure $\mathcal P$ if and only if they differ by a a {\it strongly parabolic Higgs field} on $(V,\, {\mathcal P})$.  
Recall that
a strongly parabolic Higgs field on $(V,\,{\mathcal P})$ is a trace free holomorphic section
$$
\Theta\, \in\, H^0(\Sigma,\, \text{End}(V)\otimes K_{\Sigma}\otimes {\mathcal O}_{\Sigma}({\mathbf D}))
$$
such that $$\Theta(p_j)(V_{p_j})\, \subset\, L_j\otimes (K_{\Sigma}\otimes {\mathcal O}_{\Sigma}({\mathbf 
D}))_{p_j}$$ for all $1\, \leq\, j\, \leq\, n$. These conditions imply that $\Theta(p_j)$ is nilpotent  and  the quasiparabolic line $L_j$ lies in the kernel of $\Theta(p_j)$, for all 
$1\, \leq\, j\, \leq\, n$.

\section{Logarithmic connections on $\mathcal S_4$}

Consider the Riemann sphere $\C P^1$ with three unordered marked points $\{0,\,1,\,\infty \}$
$${\mathcal S}_3\,=\, (\CP^1,\, \{0,\,1,\,\infty \})$$
and let 
\[S_3\, :=\,\CP^1\setminus\{0,\,1,\,\infty\}\]
be the three-punctured sphere. Fix a base point $p\, \in\, S_3$ and elements
$$
\gamma_0,\, \gamma_1\, \in\, \pi_1(S_3,\, p)
$$
such that $\gamma_0$ (respectively, $\gamma_1$) is the free homotopy class of the oriented loop around
the puncture $0$ (respectively, $1$). Then
$$\gamma_\infty\, :=\, (\gamma_1\gamma_0)^{-1}$$
is the free homotopy class of the oriented loop around the puncture $\infty$.

\subsection{Hyperbolic triangle and uniformization of the orbifold sphere}\label{triangle group}
$\,$\\
Consider ${\mathcal S}_3$ equipped with an orbifold structure, i.e., we assign to each marked point an angle 
$\alpha_i\,=\,\frac{2 \pi} {k_i}$, $i \,\in\, \{0,\,1,\,\infty \}$, where $k_i\, >\, 1$ are integers. Assume 
that $\frac{1}{k_0} + \frac{1}{k_1} + \frac{1} {k_{\infty} }\,<\, 1$. A hyperbolic uniformization of ${\mathcal 
S}_3$ equipped with the above orbifold structure is given by the following construction which goes back to the 
work of Schwarz, Klein and Poincar\'e (see \cite[Chapter~VI]{StG}).

The group $ \mathrm{PSL}(2,{\mathbb R})\,\subset\,\mathrm{PSL}(2,{\mathbb C})$ acts by M\"obius 
transformations on the upper half plane ${\mathcal H}^2\,:=\, \{z\,\in\, {\mathbb C}\,\mid\, {\rm Im}\, z\, 
>\, 0\}$. By viewing the upper half plane as the hyperbolic plane, $ \mathrm{PSL}(2,{\mathbb R})$ is in fact the group of orientation preserving isometries of ${\mathcal H}^2 $. Up to orientation preserving isometries, there exists a unique hyperbolic triangle $T$ in ${\mathcal H}^2$ with prescribed 
angles $(\frac{ \pi }{k_0},\,\frac{ \pi }{k_1}, \,\frac{ \pi } {k_{\infty} })$ \cite[Proposition IX.2.6]{StG}. Denote 
by $p_0,\,p_1 ,\, p_{\infty}\, \in\, {\mathcal H}^2$ the corresponding (ordered) vertices of $T$.

Denote by $\sigma_0,\,\sigma_1,\,\sigma_{\infty}$ the hyperbolic reflections across the geodesic arcs $(p_1,\,
p_{\infty})$, $(p_0,\, p_{\infty})$ and $(p_0,\, p_1)$ respectively. They generate a discrete subgroup of isometries of 
$ {\mathcal H}^2$. Consider its index two subgroup $\Gamma$ generated by 
$m_0\,=\,\sigma_{\infty} \circ \sigma_1$, $m_1\,=\,\sigma_0\circ\sigma_{\infty}$ and $m_{\infty}\,=\,\sigma_1\circ\sigma_0$. Geometrically, $\Gamma$ is generated by an even number of reflections across every geodesic edge of a hyperbolic 
geodesic triangle $T$; it is called a hyperbolic triangle group.
It is classical that such $\Gamma\,\subset\,\mathrm{PSL}(2,{\mathbb R})$ is a Fuchsian subgroup with a 
fundamental quadrilateral in $\mathcal{H}^2$ given by $P\,=\,T \cup \sigma_1(T)$. The vertices of $P$ are the points 
$p_0,\, p_1,\, p_{\infty},\, p_2$ with $p_2\,: =\, \sigma_1(p_1)$ (see \cite[Theorem VI.1.10 and Section VI.2.1]{StG}).
 
The oriented geodesic edges of $P$ satisfy  
$$m_0 \cdot (p_0,\,p_1)\,=\, (p_0,\, p_2) \quad \text{ and }\quad 
m_{\infty} \cdot (p_{\infty},\, p_2)\,= \,(p_{\infty}, p_1).$$
The maps $m_0$, $m_1$ and $m_{\infty}$ are of order $k_0,\, k_1$ and $k_{\infty},$ respectively, and $m_\infty \circ m_1 \circ m_0\,= 
\,\text{Id}$ by construction. 
Therefore, the hyperbolic triangle group $\Gamma$ generated by $m_0$, $m_1$ and $m_{\infty}$ satisfies
$$m_\infty \circ  m_1 \circ m_0\,=\, \text{Id}\ \quad \text{ and }\ \quad m_0^{k_0} = m_1^{k_1}= m_\infty^{k_\infty} \,=\,\text{Id}.$$

The quotient of ${\mathcal H}^2$ by the above Fuchsian hyperbolic triangle group $\Gamma$ endows ${\mathcal 
S}_3$, equipped with the orbifold structure $(\frac{ 2\pi }{k_0},\, \frac{ 2\pi }{k_1},\, \frac{ 2 \pi } {k_{\infty} 
})$ at the points $\{0,\,1,\,\infty \}$ respectively, with a compatible hyperbolic structure \cite[Chapter VI and Section 
VI.2.1]{StG}. In particular, the monodromy around the punctures $p_0,\, p_1,\, p_{\infty}$ of this uniformizing 
hyperbolic structure coincides with the rotations by the angles $\frac{ 2\pi }{k_0},\, \frac{ 2\pi }{k_1},\, \frac{ 2 
\pi } {k_{\infty} },$ respectively.

\subsection{Logarithmic connection on trivial bundle with Fuchsian monodromy}$\,$\\
For $\wt{\rho}\,\in\, ]0,\, \tfrac{1}{2}[$ fixed,
consider the logarithmic connection 
on the trivial holomorphic vector bundle ${\mathcal O}^{\oplus 2}_{\CP^1}$
\begin{equation}\label{c1}
\nabla\,=\,d+\begin{pmatrix}\frac{1}{8}&0\\0&-\frac{1}{8}\end{pmatrix}\frac{dz}{z}+
\begin{pmatrix}-4\wt\rho^2&1\\
\wt\rho^2-16\wt\rho^4&4\wt\rho^2\end{pmatrix}\frac{dz}{z-1}. 
\end{equation}
Since the singular locus of $\nabla$ is $\{0,\,1,\,\infty \}$, we consider $\nabla$ as a
logarithmic connection on ${\mathcal S}_3$. Throughout the paper we will use the convention that the marked points of a Riemann surface are the singular points of a logarithmic connection and branch  points of coverings. Further, let
$$
M_{\wt{\rho}}\, :\, \pi_1(S_3,\, p)\, \longrightarrow\, \mathrm{SL}(2,{\mathbb C})
$$
be the monodromy representation of the flat connection $\nabla$ in \eqref{c1}.

\begin{lemma}\label{lem:realSU3}
With the above notation the monodromy representation $M_{\wt{\rho}}$
of $\nabla$ in \eqref{c1} is conjugate to an irreducible $\mathrm{SU}(2)$ representation for
$\wt\rho\,<\,\tfrac{1}{4}$, and  to
an irreducible $\mathrm{SL}(2,\R)$ representation for $\tfrac{1}{4}\,<\, \wt\rho\,<\,\tfrac{1}{2}$.

If $\wt\rho\,=\, \frac{k-1}{2k}\,\in\,(\tfrac{1}{4},\,\tfrac{1}{2}),$ with $k\,\in\, \N^{>2}$,  the 
monodromy representation $M_{\wt{\rho}}$
is conjugated to the monodromy of a hyperbolic structure uniformizing ${\mathcal S}_3$ 
equipped with the orbifold structure $(\frac{\pi}{2},\,\frac{2\pi}{k},\,\frac{\pi}{2})$ at
points $\{0,\,1,\,\infty \}$ respectively. In particular, the image 
of the monodromy representation is the Fuchsian group generated by an even number of reflections across the 
geodesic edges of the hyperbolic geodesic triangle with angles $(\frac{\pi}{4},\,\frac{\pi}{k},\,\frac{\pi}{4}).$
\end{lemma}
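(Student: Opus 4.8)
The plan is to reduce everything to the local data of $\nabla$ at $\{0,1,\infty\}$ together with Fricke rigidity for the free group $\pi_1(S_3)\cong F_2=\langle\gamma_0,\gamma_1\rangle$. First I would compute the residues $R_0,R_1,R_\infty$. The residue at $0$ is the displayed diagonal matrix with eigenvalues $\pm\tfrac18$; the residue at $1$ is trace-free with $\det R_1=-\wt\rho^2$, hence has eigenvalues $\pm\wt\rho$; and since $\nabla$ has no other poles on $\CP^1$, the residue at $\infty$ is $R_\infty=-(R_0+R_1)$, which is trace-free with $\det R_\infty=-\tfrac{1}{64}$, so its eigenvalues are $\pm\tfrac18$. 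The exponent differences $\tfrac14,\,2\wt\rho,\,\tfrac14$ all lie in $(0,1)$, so $\nabla$ is non-resonant, and by \cite{De} the local monodromies $A:=M_{\wt\rho}(\gamma_0)$, $B:=M_{\wt\rho}(\gamma_1)$, $C:=M_{\wt\rho}(\gamma_\infty)=(BA)^{-1}$ are elliptic with
$$\tr A=\tr C=2\cos\tfrac{\pi}{4}=\sqrt2,\qquad \tr B=2\cos(2\pi\wt\rho).$$

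Next I would settle irreducibility. Writing $x=\tr A$, $y=\tr B$, $z=\tr(AB)=\tr C$, the irreducible $\mathrm{SL}(2,\C)$-characters of $F_2$ are determined by the triple $(x,y,z)$, and reducibility is detected by the commutator trace $\tr[A,B]=x^2+y^2+z^2-xyz-2$. With $x=z=\sqrt2$ this collapses to $\tr[A,B]=y^2-2y+2$, so $\tr[A,B]=2$ exactly when $y\in\{0,2\}$, i.e.\ within our range only at $\wt\rho=\tfrac14$. Hence $M_{\wt\rho}$ is irreducible for every $\wt\rho\in(0,\tfrac12)\setminus\{\tfrac14\}$.

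To decide the real form I would use that $M_{\wt\rho}$ has real character: by the classification of real irreducible $\mathrm{SL}(2,\C)$-characters (the invariant Hermitian form is definite, respectively indefinite; cf.\ \cite{Gold88}) such a representation is conjugate into $\mathrm{SU}(2)$ when $\tr[A,B]<2$ and into $\mathrm{SL}(2,\R)$ when $\tr[A,B]>2$. Here $\tr[A,B]-2=y(y-2)$ with $y=2\cos(2\pi\wt\rho)\in(-2,2)$, so $y-2<0$ and the sign is that of $y$: for $\wt\rho<\tfrac14$ one has $y>0$, whence $\tr[A,B]<2$ and $M_{\wt\rho}$ is $\mathrm{SU}(2)$, while for $\wt\rho>\tfrac14$ one has $y<0$, whence $\tr[A,B]>2$ and $M_{\wt\rho}$ is $\mathrm{SL}(2,\R)$. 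This is the crux of the lemma and the step requiring the most care: the threshold is exactly the reducibility value $\wt\rho=\tfrac14$, and one must \emph{not} read the real form off the naive hypergeometric angle sum $\tfrac14+2\wt\rho+\tfrac14$, which is $>1$ precisely when $\wt\rho>\tfrac14$ and would suggest the opposite, spherical, conclusion. The reason the naive test fails is that the exponent difference $2\wt\rho$ exceeds $\tfrac12$ there, so the geometric monodromy rotation at $1$ wraps past $\pi$ and the effective cone angle becomes $2\pi(1-2\wt\rho)$, making the true uniformizing triangle hyperbolic.

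Finally, for $\wt\rho=\tfrac{k-1}{2k}$ with $k\in\N^{>2}$ one has $\wt\rho\in(\tfrac14,\tfrac12)$, so $M_{\wt\rho}$ is an irreducible $\mathrm{SL}(2,\R)$-representation by the previous paragraph. Projectivizing, $A$ and $C$ are rotations of angle $2\cdot\tfrac{\pi}{4}=\tfrac{\pi}{2}$ (order $4$), while $B$ has eigenvalues $e^{\mp2\pi\sqrt{-1}\wt\rho}$ and hence projective rotation angle $4\pi\wt\rho=2\pi-\tfrac{2\pi}{k}\equiv-\tfrac{2\pi}{k}$ (order $k$). Thus $\overline{M_{\wt\rho}}$ sends $\gamma_0,\gamma_1,\gamma_\infty$ to elliptic isometries of exactly the cone angles $(\tfrac{2\pi}{4},\tfrac{2\pi}{k},\tfrac{2\pi}{4})$ of the orbifold structure $(\tfrac{\pi}{2},\tfrac{2\pi}{k},\tfrac{\pi}{2})$. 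By the uniqueness up to isometry of the hyperbolic triangle with angles $(\tfrac{\pi}{4},\tfrac{\pi}{k},\tfrac{\pi}{4})$ (\cite[Proposition IX.2.6]{StG}, applicable since $\tfrac12+\tfrac1k<1$) together with Fricke rigidity, $\overline{M_{\wt\rho}}$ is conjugate in $\mathrm{PGL}(2,\R)$ to the triangle-group representation $\gamma_i\mapsto m_i$ of Section~\ref{triangle group}. Its image is therefore the Fuchsian group generated by the even products of reflections across the edges of that triangle, which is the monodromy of the uniformizing hyperbolic structure; this yields the last assertion. The only genuinely delicate point beyond bookkeeping is the sign analysis of $\tr[A,B]-2$ in the third paragraph, so I would present the commutator-trace computation carefully and flag the angle-sum trap explicitly.
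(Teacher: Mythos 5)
Your argument reaches the right conclusions but by a genuinely different route from the paper. The paper first uses the residue/trace computation together with Goldman's rigidity (\cite[Proposition 4.1]{Gold88}) to reduce everything to the trace triple $(\sqrt2,\,2\cos(2\pi\wt\rho),\,\sqrt2)$, and then simply \emph{exhibits} explicit matrices $\wt X_0,\wt X_1,\wt X_\infty$ realizing these traces that visibly lie in $\mathrm{SU}(2)$ or $\mathrm{SL}(2,\R)$; for the second part it computes the fixed points and the derivatives of the corresponding M\"obius transformations and matches them with the generators $m_0,m_1,m_\infty$ of Section \ref{triangle group}. You replace the explicit matrices by the commutator-trace criterion and the triangle-group identification by an abstract rigidity argument. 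Both replacements can be made to work, and yours is arguably cleaner, but each needs a correction or an extra verification.

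First, your stated dichotomy ``$\Tr[A,B]<2\Rightarrow \mathrm{SU}(2)$, $\Tr[A,B]>2\Rightarrow \mathrm{SL}(2,\R)$'' for irreducible real characters is false as a general principle: a Fuchsian once-punctured-torus group has $\Tr[A,B]<-2<2$ and is certainly not unitarizable. The correct statement (Goldman) is that the $\mathrm{SU}(2)$-characters are exactly the triples $(x,y,z)\in[-2,2]^3$ with $x^2+y^2+z^2-xyz-2\le 2$. Your application is saved because here $x=z=\sqrt2$ and $y=2\cos(2\pi\wt\rho)$ all lie in $(-2,2)$, so $\kappa<2$ does force $\mathrm{SU}(2)$, while $\kappa>2$ excludes $\mathrm{SU}(2)$ and hence (real irreducible character) forces $\mathrm{SL}(2,\R)$; but you must add the hypothesis $x,y,z\in[-2,2]$ to the step you yourself call the crux. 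Second, in the last paragraph ``uniqueness of the triangle plus Fricke rigidity'' is too quick as written: matching the unsigned rotation angles of $A$, $B$, $AB$ only determines the traces up to signs, and the sign choices split into two even-sign-change orbits, one of which is an $\mathrm{SU}(2)$-character (namely $(\sqrt2,\,2\cos\tfrac{\pi}{k},\,\sqrt2)$, for which $xyz>0$) and one of which is the $\mathrm{SL}(2,\R)$-character at hand ($xyz<0$). You therefore need to observe that among irreducible $\mathrm{PSL}(2,\R)$-representations with these elliptic rotation data there is exactly one $\mathrm{PGL}(2,\R)$-conjugacy class, or else check directly (as the paper does via the explicit matrices and the derivatives \eqref{rot}) that the lifted triangle-group generators realize the trace triple $(\sqrt2,\,-2\cos\tfrac{\pi}{k},\,\sqrt2)$ with $\wt m_\infty\wt m_1\wt m_0=I$. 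With those two points supplied, your proof is complete.
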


\begin{proof}
Let $X_0,\,X_1$ and $X_\infty$  denote
the elements $M_{\wt{\rho}}(\gamma_0),\, M_{\wt{\rho}}(\gamma_1)$ and $M_{\wt{\rho}}(\gamma_\infty)$ of
$\mathrm{SL}(2,{\mathbb C})$ respectively. Moreover, let $R_0,\, R_1,\, R_\infty$ denote the respective residues of $\nabla$ at $0,\,1,\,\infty$.
Note that for $\wt \rho \,\in\, ]0,\, \tfrac{1}{2}[$ none of the eigenvalues of $R_0,\, R_1,\, R_\infty$ lies in $\tfrac{1}{2}\Z$; in other words,
$\nabla$ is non-resonant. Consequently, 
the conjugacy class of $X_i$ is given by
\begin{equation}\label{r1}
\exp(-2\pi\sqrt{-1}R_i)
\end{equation}
for $i\,=\, 0,\,1,\,\infty$ (see \cite[p.~53, Th\'eor\`eme 1.17]{De}). For $\nabla$ in \eqref{c1} we therefore compute 
\begin{equation}\label{e1}
\Tr(X_0)\,=\,\sqrt{2}\,=\,\Tr(X_\infty),\ \ \ \Tr(X_1)\,=\,2\cos(2\pi\wt\rho).
\end{equation}
This gives that the representation $M_{\wt{\rho}}$ is irreducible for
\[0\,<\,\wt\rho\,<\,\tfrac{1}{4}\quad\text{and}\quad \tfrac{1}{4}\,<\,\wt\rho\,<\,\tfrac{1}{2},\]
see \cite[p.~574, Proposition 4.1 (iii)]{Gold88}. It also follows that the three equations in \eqref{e1} determine
$M_{\wt{\rho}}$ uniquely up to the conjugation by an element of $\mathrm{SL}(2,{\mathbb C})$
\cite[p.~574, Proposition 4.1 (iv\,\&\,v)]{Gold88}.

Consider the matrices
\[
\wt{X}_0\,=\,\begin{pmatrix}\frac{1}{\sqrt{2}}& -\frac{1}{\sqrt{2}}\\ \frac{1}{\sqrt{2}}
& \frac{1}{\sqrt{2}}
\end{pmatrix}\]
\[\wt{X}_1=\begin{pmatrix}
\cos{2\pi\widetilde\rho}-\sqrt{(-1+\cos{2\pi\widetilde\rho})\cos{2\pi\widetilde\rho}}&
1-\cos{2\pi\widetilde\rho}+\sqrt{(-1+\cos{2\pi\widetilde \rho})\cos{2\pi\widetilde\rho}}\\
-1+\cos{2\pi\widetilde\rho}+\sqrt{(-1+\cos{2\pi\widetilde\rho})\cos{2\pi\widetilde\rho}}&
\cos{2\pi\widetilde\rho}+\sqrt{(-1+\cos{2\pi\widetilde\rho})\cos{2\pi\widetilde\rho}}
\end{pmatrix}\]
\[\wt{X}_\infty=\begin{pmatrix}\frac{1}{\sqrt{2}}
&\frac{-1+2\cos{2\pi\widetilde \rho}-2\sqrt{(-1+\cos{2\pi\widetilde \rho})\cos{2\pi\widetilde\rho}}}{\sqrt{2}}\\
\frac{1-2\cos{2\pi\widetilde\rho}-2\sqrt{(-1+\cos{2\pi\widetilde \rho})\cos{2\pi\widetilde \rho}}}{\sqrt{2}}&\frac{1}{\sqrt{2}} 
\end{pmatrix}.\]
These determine a monodromy homomorphism
$$
M'\, :\, \pi_1(S_3,\, p)\, \longrightarrow\, \mathrm{SL}(2,{\mathbb C})
$$
that takes $\gamma_0,\,\gamma_1$ and $\gamma_\infty$ to
$\wt{X}_0$, $\wt{X}_1$ and $\wt{X}_\infty$ respectively. Since the three equations in \eqref{e1} determine $M_{\wt{\rho}}$ uniquely up to conjugation
by some element of $\mathrm{SL}(2,{\mathbb C})$, we conclude that $M_{\wt{\rho}}$ and $M'$ are conjugate to each other. Evidently, the image of $M'$ lies in $\mathrm{SU}(2)$ if $0\,<\,\wt\rho\,<\,\tfrac{1}{4}$,
and it lies in $\mathrm{SL}(2,\R)$ if $\tfrac{1}{4}\,<\, \wt\rho\, <\,\tfrac{1}{2}$, proving the first part of the lemma.

To prove the second part, fix $k\,\in\,\N^{\geq3}$ and consider the special case of $\wt\rho\,=\, 
\frac{k-1}{2k}\,\in\,(\tfrac{1}{4},\,\tfrac{1}{2}).$ In this case the corresponding ${\rm SL}(2,\R)$ matrices generating the monodromy group $\Lambda$ for $\nabla$ specialize to
\[
\widetilde{X}_0\,=\,\begin{pmatrix}\frac{1}{\sqrt{2}}& -\frac{1}{\sqrt{2}}\\ \frac{1}{\sqrt{2}}
& \frac{1}{\sqrt{2}}
\end{pmatrix}\]
\[\widetilde{X}_1\,=\,\begin{pmatrix}
-\cos{\frac{\pi}{k}}-\sqrt{(1+\cos{\frac{\pi}{k}}) \cos\frac{\pi}{k}}&
1 + \cos{\frac{\pi}{k}}+\sqrt{(1+\cos{\frac{\pi}{k}})\cos{\frac{\pi}{k}}}\\
-1 -\cos{\frac{\pi}{k}}+\sqrt{(1+\cos{\frac{\pi}{k}})\cos{\frac{\pi}{k}}}&
-\cos{\frac{\pi}{k}}+\sqrt{(1+\cos{\frac{\pi}{k}}) \cos\frac{\pi}{k}}
\end{pmatrix}\]
\[\widetilde{X}_\infty\,=\,\begin{pmatrix}
\frac{1}{\sqrt{2}}&\frac{-1- 2\cos{\frac{\pi}{k}}-2 \sqrt{(1+\cos{\frac{\pi}{k}})\cos{\frac{\pi}{k}}}}{\sqrt{2}}\\
\frac{1+ 2\cos{\frac{\pi}{k}}-2\sqrt{(1+\cos{\frac{\pi}{k}})\cos{\frac{\pi}{k}}}}{\sqrt{2}}& \frac{1}{\sqrt{2}}
\end{pmatrix}.\]

Let $m(\widetilde{X}_0)$, $m(\widetilde{X}_1)$ and $m(\widetilde{X}_\infty)$ be the
automorphisms of the upper half plane
${\mathcal H}^2$
given by $\widetilde{X}_0$, $\widetilde{X}_1$ and $\widetilde{X}_\infty$ respectively.
The points of ${\mathcal H}^2$
\[
\begin{split}
p_0&\,=\,\sqrt{-1}\\
p_1&\,=\,\frac{1+\cos\tfrac{\pi}{k}+\sqrt{\cos\tfrac{\pi}{k}(1+\cos\tfrac{\pi}{k})}}{\sqrt{\cos\tfrac{\pi}{k}(1+\cos\tfrac{\pi}{k})}- \sqrt{-1}\sin\tfrac{\pi}{k}}\\
p_\infty&\,=\,\sqrt{-1}(\,1+2\cos\tfrac{\pi}{k}+2\sqrt{\cos\tfrac{\pi}{k}(1+\cos\tfrac{\pi}{k})}\,)
\end{split}
\]
are fixed by $m(\widetilde{X}_0)$, $m(\widetilde{X}_1)$ and $m(\widetilde{X}_\infty)$ respectively.
Recall that an element of $\text{PSL}(2, \R)$ is completely determined by a fixed point in ${\mathcal H}^2$
together with the differential at the fixed point. The differentials of
$m(\widetilde{X}_0)$, $m(\widetilde{X}_1)$ and $m(\widetilde{X}_\infty)$ at $p_0$, $p_1$ and
$p_\infty,$ respectively, are rotations and a short computation shows that these are given by
\begin{equation}\label{rot}
\begin{split}
D_{p_0} m(\widetilde{X}_0)&\,=\,-\sqrt{-1}\\
D_{p_1} m(\widetilde{X}_1) &\,=\,e^{-\tfrac{2\pi \sqrt{-1}}{k}}\\
D_{p_\infty} m(\widetilde{X}_\infty) &\,=\,-\sqrt{-1}.\\
\end{split}
\end{equation}
Therefore $\Lambda$ is conjugated in $\text{PSL}(2, \R)$ to the Fuchsian hyperbolic triangle group
associated to the hyperbolic triangle $(p_0,\, p_1,\, p_{\infty})$. The transformations 
$m(\widetilde{X}_0), m(\widetilde{X}_1)$ and $m(\widetilde{X}_\infty )$ coincide with $m_0$, $m_1$ and 
$m_{\infty}$ defined in Section \ref{triangle group},respectively (see also \cite[Chapter VI]{StG}).

{}From \eqref{rot}, the internal angles of the hyperbolic triangle are
\[
\alpha_0\,=\,\tfrac{\pi}{4}\quad 
\alpha_1\,=\,\tfrac{\pi}{k}\quad \text{ and } \quad 
\alpha_\infty\,=\,\tfrac{\pi}{4}.
\]
It follows from Section \ref{triangle group} that the monodromy homomorphism of $\nabla$ is conjugated in 
$\text{PSL}(2, \R)$ to the monodromy homomorphism of the uniformizing hyperbolic structure of the orbifold 
${\mathcal S}_3$ with angles $(\frac{\pi}{2},\,\frac{2\pi}{k},\,\frac{\pi}{2})$ at the points $\{0,\,1,\,\infty \}$
respectively.
\end{proof}

Let ${\mathcal S}_4$ denote the Riemann sphere $\CP^1$ with unordered four marked points
$$\{1,\,\sqrt{-1},\,-1,\,-\sqrt{-1}\}$$ and let
\begin{equation}\label{s4}
S_4\, :=\, \CP^1\setminus\{1,\,\sqrt{-1},\,-1,\,-\sqrt{-1}\}
\end{equation}
be the four-punctured sphere. Similarly, denote by ${\mathcal S}_6$ the Riemann sphere $\CP^1$ with six unordered
marked points
$\{0, 1,\,\sqrt{-1},\,-1,\,-\sqrt{-1}, \infty\}$, and define $S_6\, :=\, S_4 \setminus \{0, \infty \}.$ Consider the map
\[f\,\colon\, {\mathcal S}_6 \, \longrightarrow\, {\mathcal S}_3;\ \ z\,\longmapsto\, z^4 .\]
For the logarithmic connection $\nabla$ in \eqref{c1}, let
\begin{equation}\label{eq:wtnabla}
\nabla^1\, :=\, f^*\nabla
\end{equation}
be the logarithmic connection on the trivial holomorphic bundle
${\mathcal O}^{\oplus 2}_{{\mathcal S}_6}$ whose singular points coincide with the marked points.
We will construct a logarithmic connection on ${\mathcal O}^{\oplus 2}_{{\mathcal S}_4}$ using
$\nabla^1$.

Let $X$ denote $\CP^1$ with the ten unordered marked point $a_1,\, \cdots,\, a_{10}$
such that $$a^2_i \, \in\, \{0, 1,\,\sqrt{-1},\,-1,\,-\sqrt{-1}, \infty\}.$$ Let
\begin{equation}\label{vp}
\varpi\, :\, X\,\longrightarrow\, {\mathcal S}_6\, ,\ \ \ w\, \longmapsto\, w^2
\end{equation}
be the ramified covering map.
We have the logarithmic connection $\varpi^*\nabla^1$
on $\varpi^*{\mathcal O}^{\oplus 2}_{{\mathcal S}_6}\,=\, {\mathcal O}^{\oplus 2}_X$, where $\nabla^1$
is defined in \eqref{eq:wtnabla}. 
The Galois group of the ramified covering map $\text{Gal}(\varpi) \,=\,{\mathbb Z}/2{\mathbb Z}$
in \eqref{vp} acts on the vector bundle ${\mathcal O}^{\oplus 2}_X$; this action of ${\mathbb Z}/
2{\mathbb Z}$ on ${\mathcal O}^{\oplus 2}_X$ evidently preserves the logarithmic connection $\varpi^*\nabla^1.$

Let $z$ denote the
standard holomorphic coordinate on $S_6\, \subset\, \CP^1\setminus\{0,\, \infty\}$, so $\sqrt{z}\,:=\,
z\circ\varpi$ is a nowhere
vanishing holomorphic function on $\varpi^{-1}(S_6)\, \subset\, X$. For notational convenience,
we denote the subset $\varpi^{-1}(S_6)\, \subset\, X$ by $X'$.
Consider the holomorphic automorphism (= gauge transformation)
\[G\,:=\,\begin{pmatrix}\frac{1}{\sqrt{z}}&\frac{4}{-1+16\wt\rho^2}\sqrt{z}\\0&\sqrt{z}\end{pmatrix}
\begin{pmatrix}\frac{1}{\sqrt{\wt\rho(1-16\wt\rho^2)}}&0\\0&\sqrt{\wt\rho(1-16\wt\rho^2)}\end{pmatrix}\]
of ${\mathcal O}^{\oplus 2}_{X'}$ and let
\begin{equation}\label{n2}
\nabla^2\, :=\, ((\varpi^*\nabla^1)\vert_{X'}) . G \,=\, ((\varpi^*f^*\nabla)\vert_{X'}). G
\end{equation}
be the holomorphic connection on ${\mathcal O}^{\oplus 2}_{X'}$ given by the action of the automorphism $G$ on
the connection $\varpi^*\nabla^1\vert_{X'}$ (the connection $\nabla^1$ is defined in \eqref{eq:wtnabla}).

Although the above mentioned action $\text{Gal}(\varpi)\,=\, {\mathbb Z}/2{\mathbb Z}$
on $\varpi^*{\mathcal O}^{\oplus 2}_{S_6}\,=\, {\mathcal O}^{\oplus 2}_{X'}$ does not preserve $G$,
it is straightforward to check that the action of ${\mathbb Z}/2{\mathbb Z}$
on ${\mathcal O}^{\oplus 2}_{X'}$ actually preserves the connection $\nabla^2$ defined in \eqref{n2}.
Indeed, the action of the nontrivial element of ${\mathbb Z}/2{\mathbb Z}$ takes $G$ to $-G$. On the
other hand, the action of $-I\, \in\, \text{SL}(2,{\mathbb C})$ fixes every connection
on the trivial bundle ${\mathcal O}^{\oplus 2}_{X'}$. These imply that
the action of ${\mathbb Z}/2{\mathbb Z}$ preserves the connection $\nabla^2$. Hence there is a unique
holomorphic connection on ${\mathcal O}^{\oplus 2}_{S_6}$ whose pullback, by $\varpi$, is the
connection $\nabla^2$ on ${\mathcal O}^{\oplus 2}_{X'}\,=\, \varpi^*{\mathcal O}^{\oplus 2}_{S_6}$. Let
$\wt\nabla$ be the unique
holomorphic connection on ${\mathcal O}^{\oplus 2}_{S_6}$ such that
$$
\varpi^*\wt\nabla\, =\, \nabla^2\,=\, ((\varpi^*f^*\nabla)\vert_{X'}). G\,.
$$
A computation shows that
\begin{equation}\label{wtnabla}
\wt\nabla
\,=\,d+\begin{pmatrix}0&4\wt\rho \\ 4\wt\rho z^2&0 \end{pmatrix}\frac{dz}{z^4-1}
\end{equation}
on ${\mathcal O}^{\oplus 2}_{{\mathcal S}_6}\,=\, {\mathcal O}^{\oplus 2}_{\CP^1}$. In particular,
$\wt\nabla$ is a logarithmic connection on ${\mathcal O}^{\oplus 2}_{{\mathcal S}_4}$,
because $0,\,\infty$ are regular points of $\wt\nabla$.

\begin{lemma}\label{lem:realSU4}
The monodromy representation of the flat connection $\wt\nabla$ on ${\mathcal O}^{\oplus 2}_{{\mathcal S}_4}$
in \eqref{wtnabla}
is conjugate to a $\mathrm{SU}(2)$ representation if
$0\,<\,\wt\rho\,<\,\tfrac{1}{4}$, and it is conjugate to a $\mathrm{SL}(2,\R)$ representation if
$\tfrac{1}{4}\,<\, \wt\rho\, <\,\tfrac{1}{2}$.

Moreover, if $\wt\rho\,=\, \frac{k-1}{2k}$ with $k\,\in\, \N^{>2}$, then
the monodromy representation for $\wt\nabla$
is conjugated to the monodromy of a hyperbolic structure uniformizing ${\mathcal S}_4$ 
equipped with the orbifold structure $(\frac{2\pi}{k},\,\frac{2\pi}{k},\,\frac{2\pi}{k}, \frac{2\pi}{k})$
at the four marked points. In particular, the image 
of the monodromy representation is an index 4 subgroup in the Fuchsian group generated by an even number of reflections across the 
geodesic edges of the hyperbolic geodesic triangle with angles $(\frac{\pi}{4},\,\frac{\pi}{k},\,\frac{\pi}{4}).$
\end{lemma}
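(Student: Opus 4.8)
The plan is to reduce both assertions to Lemma~\ref{lem:realSU3} by tracking how the monodromy $M_{\wt\rho}$ of $\nabla$ in \eqref{c1} propagates through the construction of $\wt\nabla$. Recall that $\wt\nabla$ was obtained from $\nabla^1=f^*\nabla$ by pulling back along the double cover $\varpi$ of \eqref{vp}, applying the gauge transformation $G$, and descending the result $\nabla^2$ of \eqref{n2}. Since $f\colon S_6\to S_3$ is an unramified $4$-sheeted covering of the punctured surfaces (the ramification of $z\mapsto z^4$ sits over the deleted points $0,\infty$), the monodromy of $\nabla^1$ is $M_{\wt\rho}\circ f_*$, the restriction of $M_{\wt\rho}$ to the index-four subgroup $f_*\pi_1(S_6)\subset\pi_1(S_3)$. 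First I would record the effect of $f$ on the peripheral loops: a small loop around a fourth root of unity maps to $\gamma_1$, whereas the loops $\ell_0,\ell_\infty$ around $0,\infty$ map to $\gamma_0^{4},\gamma_\infty^{4}$. As $X_0,X_\infty$ have eigenvalues $e^{\pm\pi\sqrt{-1}/4}$ by \eqref{e1}, this gives $X_0^{4}=X_\infty^{4}=-I$; equivalently the residues $4R_0,4R_\infty$ of $\nabla^1$ have eigenvalues $\pm\tfrac12$, so $\nabla^1$ has monodromy $-I$ about $0$ and about $\infty$.

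Next I would identify $\wt\nabla$ with a sign twist of $\nabla^1$. The gauge $G$ is single valued on $X'=\varpi^{-1}(S_6)$, being built from $\sqrt z$, and $\varpi\colon X'\to S_6$ is the connected unramified double cover classified by the character $\chi\colon\pi_1(S_6)\to\{\pm1\}$ with $\chi(\ell_0)=\chi(\ell_\infty)=-1$ and $\chi=+1$ on the loops around $1,\sqrt{-1},-1,-\sqrt{-1}$. From $\varpi^*\wt\nabla=\nabla^2$ and \eqref{n2} it follows that, on the index-two subgroup $\ker\chi=\varpi_*\pi_1(X')$, the monodromy of $\wt\nabla$ is conjugate, by a fixed value of $G$, to that of $\nabla^1$; and this restriction is irreducible because the image of $M_{\wt\rho}$ is Zariski dense in $\mathrm{SL}(2,\C)$, so a finite-index subgroup stays irreducible. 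Two representations of $\pi_1(S_6)$ agreeing on $\ker\chi$ with irreducible restriction differ by tensoring with a character of $\pi_1(S_6)/\ker\chi\cong\Z/2$, i.e. by $1$ or $\chi$. Evaluating on $\ell_0\notin\ker\chi$, where $\wt\nabla$ is regular ($\rho(\ell_0)=I$) while $M_{\wt\rho}(f_*\ell_0)=X_0^4=-I$, forces the twist to be $\chi$: the monodromy of $\wt\nabla$ is conjugate to $\chi\otimes(M_{\wt\rho}\circ f_*)$. Since twisting by a $\{\pm1\}$-valued character only multiplies matrices by $\pm1$, it preserves membership in $\mathrm{SU}(2)$ and in $\mathrm{SL}(2,\R)$; combined with Lemma~\ref{lem:realSU3} this yields the first assertion, with the transition occurring at $\wt\rho=\tfrac14$ exactly as for $\nabla$.

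For the orbifold statement with $\wt\rho=\frac{k-1}{2k}$ I would pass to $\mathrm{PSL}(2,\R)$, where the twist by $\chi$ is invisible, so the projective monodromy group of $\wt\nabla$ coincides with the image of $M_{\wt\rho}\circ f_*$, a finite-index subgroup of the triangle group $\Gamma$ of Section~\ref{triangle group} (the image of $M_{\wt\rho}$ in Lemma~\ref{lem:realSU3}). The local monodromy of $\wt\nabla$ about each of the four marked points is conjugate to $X_1$, whose eigenvalues $-e^{\pm\pi\sqrt{-1}/k}$ have order $k$ in $\mathrm{PSL}(2,\R)$, so the representation factors through the orbifold fundamental group of $\mathcal S_4$ carrying four cone points of order $k$, that is of angle $\tfrac{2\pi}{k}$. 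Finally I would verify, via the orbifold Riemann--Hurwitz relation $\chi^{\mathrm{orb}}(\mathcal S_4(k,k,k,k))=4\,\chi^{\mathrm{orb}}(\mathcal S_3(4,k,4))$, that $f$ realises $\mathcal S_4$ with these four order-$k$ cone points as a degree-four orbifold covering of the orbifold $\mathcal S_3$ with angles $(\tfrac{\pi}{2},\tfrac{2\pi}{k},\tfrac{\pi}{2})$ uniformized in Lemma~\ref{lem:realSU3}; the preimage of the order-four points $0,\infty$ becomes smooth, which is why $\wt\nabla$ is regular there. Hence $\wt\nabla$ is the $f$-pullback of the uniformizing representation and its image is precisely the index-four subgroup of $\Gamma$, as claimed.

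The main obstacle is the middle step: correctly accounting for the half-integer residues $\pm\tfrac12$ of $\nabla^1$ at $0,\infty$, which is exactly why the double cover $\varpi$ and the gauge $G$ are needed, since no single-valued gauge on $S_6$ can shift a residue by $\tfrac12$. One must confirm that the descent of $\nabla^2$ introduces no monodromy beyond the controlled sign twist by $\chi$---in particular that the residual $\Z/2$ ambiguity in extending the representation off $\ker\chi$ is pinned down by regularity at $0,\infty$ rather than by some further conjugation---and that the resulting index is four and not eight. Everything else is a formal consequence of Lemma~\ref{lem:realSU3}, because restricting to a finite-index subgroup and twisting by a $\{\pm1\}$-valued character both respect the real forms $\mathrm{SU}(2)$ and $\mathrm{SL}(2,\R)$.
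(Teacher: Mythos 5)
Your argument follows essentially the same route as the paper: pull back along $f$, control the sign ambiguity created by the double cover $\varpi$ and the odd gauge $G$, reduce to Lemma \ref{lem:realSU3}, and then read off the orbifold covering $\mathcal S_4(k,k,k,k)\to\mathcal S_3(4,k,4)$ for the second assertion. The one step you do differently is the identification of the sign twist: the paper simply observes that the nontrivial deck transformation of $\varpi$ sends $G$ to $-G$, so by construction every parallel transport of $\wt\nabla$ equals $\pm$ a $G(\tilde p)$-conjugate of a parallel transport of $\nabla^1$, whence the monodromy image lies in the group generated by the image of $M_{\wt\rho}\circ f_*$ and $-I$ --- which already suffices for the first assertion since $-I$ belongs to both $\mathrm{SU}(2)$ and $\mathrm{SL}(2,\R)$. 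You instead recover the twist as exactly $\chi$ via Schur's lemma applied to the restriction to $\ker\chi$, pinned down by regularity of $\wt\nabla$ at $0,\infty$ against $X_0^4=X_\infty^4=-I$; this is a nice sharpening (it identifies the monodromy precisely as $\chi\otimes(M_{\wt\rho}\circ f_*)$, which the paper only gets implicitly), but it costs you an irreducibility hypothesis.

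The only point I would flag is your justification of that hypothesis: the claim that the image of $M_{\wt\rho}$ is Zariski dense in $\mathrm{SL}(2,\C)$ is correct for $\tfrac14<\wt\rho<\tfrac12$ (an irreducible subgroup of $\mathrm{SL}(2,\R)$ with the traces \eqref{e1} is infinite and not conjugate into the normalizer of a torus, since $X_0$ and $X_1$ have nonzero trace), but it can fail in the $\mathrm{SU}(2)$ range: for instance at $\wt\rho=\tfrac16$ the trace triple $(\sqrt2,1,\sqrt2)$ is realized in the binary octahedral group, so by the rigidity statement \cite[Proposition 4.1]{Gold88} invoked in Lemma \ref{lem:realSU3} the image of $M_{1/6}$ is finite and in particular not Zariski dense. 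One can check that the restriction to the relevant finite-index subgroup is still irreducible in these exceptional cases (its image contains the binary tetrahedral group), so your conclusion survives, but as written the justification has a hole for finitely many $\wt\rho\in\,]0,\tfrac14[$. The paper's direct ``$G\mapsto -G$'' argument avoids irreducibility altogether and is the cleaner way to close this. Your treatment of the second assertion, including the Riemann--Hurwitz consistency check and the passage to $\mathrm{PSL}(2,\R)$ where the $\chi$-twist disappears, matches the paper's reasoning and correctly yields the index-$4$ statement.
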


\begin{proof}
The monodromy representation of a pulled-back flat connection is the pull-back of the monodromy representation.
Further, a gauge transformation of a flat connection does not change the conjugacy class of the
monodromy representation. Let ${\mathcal I}\, \subset\, \mathrm{SL}(2,\C)$ be the image of the monodromy
homomorphism for the connection $\nabla^1$ in \eqref{eq:wtnabla} (the conjugacy class of this subgroup is unique).
Let \[\widetilde{\mathcal I}\, \subset\, \mathrm{SL}(2,\C)\] be the subgroup generated by $I$ and $-I$.
Since the action of the nontrivial element of the structure group
${\mathbb Z}/2{\mathbb Z}$ of the principal bundle in \eqref{vp} takes $G$ to $-G$,
the image of the monodromy homomorphism of $\wt\nabla$ is contained in
$\widetilde{\mathcal I}$ by construction. Then the first assertion of the  lemma follows from Lemma \ref{lem:realSU3}.

For the second statement, let $\wt\rho\,=\, \frac{k-1}{2k}\,\in\,(\tfrac{1}{4},\,\tfrac{1}{2}),$ with $k\,\in\, \N^{>2}$. 
It was shown in Lemma \ref{lem:realSU3} that the monodromy homomorphism for $\nabla$ is 
conjugated to the monodromy homomorphism of the uniformizing hyperbolic structure of the orbifold ${\mathcal S}_3$, 
with angles $(\frac{\pi}{2},\,\frac{2\pi}{k},\,\frac{\pi}{2})$ at points $\{0,\,1,\,\infty \}$ respectively.

The monodromy homomorphism for $\wt\nabla$ is the pull-back of the monodromy homomorphism for $\nabla$ through a 
4-fold covering totally branched over the marked points $0,\, \infty\,\in\, {\mathcal S}_3$. 
Therefore, the monodromy homomorphism for $\wt\nabla$ is conjugated to the monodromy homomorphism of the uniformizing 
hyperbolic structure of the orbifold ${\mathcal S}_4$ with angles 
$(\frac{2\pi}{k},\,\frac{2\pi}{k},\,\frac{2\pi}{k},\, \frac{2\pi}{k})$ (i.e., the orbifold structures at the four
preimages of $1$ are same). The image of the monodromy homomorphism of $\wt\nabla$ is an index 4 subgroup in
the Fuchsian triangle group $\Lambda$ defined in the proof of Lemma \ref{lem:realSU3}.
\end{proof}

\section{Pullback to hyperelliptic Riemann surfaces}

Let $\Sigma_{k}$ be the compact Riemann surface of genus $k-1$ defined by the algebraic equation
\begin{equation}\label{Sigma_k}
Y^{k}\,=\, \frac{Z^2-1}{Z^2+1}\,.
\end{equation}
It has the projection of degree $k$
\begin{equation}\label{fk}
f_k\,\,\colon\,\,\Sigma_{k}\,\longrightarrow\, \CP^1\, ,\ \ \ (Y,\,Z)\,\longmapsto\, Z.
\end{equation} The  hyperelliptic involution is given by $(Y,\,Z)\,\longmapsto\, (Y,\,-Z).$ 
Note that for $k=2$, the elliptic curve $\Sigma_2$ is of square conformal type and we identify $\Sigma_2\,=\,\C/(2\Z+2\sqrt{-1}\Z)$.

For $k\,\in\,\N^{\geq3}$ let $\wt\rho\,=\frac{k-1}{2k}\,\in\,(\tfrac{1}{4},\,\tfrac{1}{2})$ and consider the logarithmic connection
\begin{equation}\label{eq:D}
D\,=\,d+\begin{pmatrix} \widetilde\rho&0\\0&-\widetilde\rho\end{pmatrix}
\left(\frac{dz}{z-1}-\frac{dz}{z+\sqrt{-1}}+\frac{dz}{z+1}-\frac{dz}{z-\sqrt{-1}}\right)
\end{equation}
on ${\mathcal O}^{\oplus 2}_{{\mathcal S}_4}$ over ${\mathcal S}_4$. Then $D$ can be pulled back to the 
logarithmic connection
\begin{equation}\label{fkd}
(f^*_k{\mathcal O}^{\oplus 2}_{{\mathcal S}_4},\, f^*_k D) \,=\, ({\mathcal O}^{\oplus 2}_{\Sigma_{k}},\,
f^*_k D)
\end{equation}
by the map $f_k$ in \eqref{fk}. The singular points of $f^*_k D$ are
\begin{equation}\label{p4}
p_1\,=\, (0,\, 1),\ p_2\,=\, (\infty,\, \sqrt{-1}),\ p_3\,=\, (0,\, -1),\ p_4\,=\, (\infty,\, - \sqrt{-1})
\end{equation}
in terms of the above pair of coordinate functions  $(Y,\, Z)$ on $\Sigma_{k}$. Let
\begin{equation}\label{sc}
\Sigma'_{k}\, :=\, \Sigma_{k}\setminus \{p_1,\, p_2,\, p_3,\, p_4\}
\end{equation}
be the complement of the points in \eqref{p4}. Then the following proposition holds.

\begin{proposition}\label{red}
Let $k\,\in\,\N^{\geq3}$ and $\wt\rho\,=\,\frac{k-1}{2k}\,\in\,(\tfrac{1}{4},\,\tfrac{1}{2})$.
\begin{enumerate}
\item If $k$ is odd, then there is a meromorphic automorphism $G$ of ${\mathcal O}^{\oplus 2}_{\Sigma_{k}}$
such that
\begin{itemize}
\item $G$ is nonsingular on $\Sigma'_{k}$,
\item $G$ gauges the holomorphic connection $(f_k^*D)\vert_{\Sigma'_{k}}$ 
 to the trivial
holomorphic connection on
${\mathcal O}^{\oplus 2}_{\Sigma'_{k}}.$ In particular, $f_k^*D$ has trivial monodromy.
\end{itemize}

\item If $k$ is even, then there is a holomorphic line bundle ${\mathcal L}$ over $\Sigma_{k}$ with a logarithmic
connection $\nabla^{\mathcal L}$ with polar part contained in $\mathbf D=p_1+p_2+p_3+p_4$ such that
\begin{itemize}
\item the image of the monodromy homomorphism of $\nabla^{\mathcal L}$ is $\{\pm 1\}\, \subset\, {\mathbb C}^*$, 
\item there is a meromorphic isomorphism
$$
G\, :\, {\mathcal O}^{\oplus 2}_{\Sigma_{k}} \, \longrightarrow\, {\mathcal O}^{\oplus 2}_{\Sigma_{k}}\otimes {\mathcal L}\, =\,
{\mathcal L}^{\oplus 2}, 
$$
singular at $\mathbf D$, which gauges 
the
holomorphic connection
$(f_k^*D)\otimes \nabla^{\mathcal L}$ on ${\mathcal L}^{\oplus2}\vert_{\Sigma'_{k}}$ to
the trivial holomorphic connection on ${\mathcal O}^{\oplus 2}_{\Sigma'_{k}}$.
In particular, the
monodromy of 
$(f_k^*D)\otimes \nabla^{\mathcal L}$  is trivial.
\end{itemize}
\end{enumerate}
\end{proposition}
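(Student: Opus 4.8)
The plan is to exploit the fact that $D$ is \emph{diagonal}, so that $f_k^*D$ is an abelian connection and its gauge--triviality reduces to a one--variable computation in the function $Y$. First I would rewrite the residue term of $D$ as a logarithmic derivative: since $\frac{dz}{z-1}+\frac{dz}{z+1}=\frac{2z\,dz}{z^2-1}$ and $-\frac{dz}{z+\sqrt{-1}}-\frac{dz}{z-\sqrt{-1}}=-\frac{2z\,dz}{z^2+1}$, the $1$--form multiplying $\mathrm{diag}(\wt\rho,-\wt\rho)$ equals $d\log\frac{z^2-1}{z^2+1}$. Pulling back by $f_k$ and using the defining equation $Y^k=\frac{Z^2-1}{Z^2+1}$ from \eqref{Sigma_k}, this form becomes $d\log(Y^k)=k\,\frac{dY}{Y}$, whence
\begin{equation*}
f_k^*D\,=\,d+\frac{k-1}{2}\,\mathrm{diag}(1,-1)\,\frac{dY}{Y}.
\end{equation*}

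Next I would record $\mathrm{div}(Y)$. Because $f_k$ is totally branched exactly over $\pm1,\pm\sqrt{-1}$ (where $\frac{Z^2-1}{Z^2+1}$ is $0$ or $\infty$), the function $Y$ has simple zeros at $p_1,p_3$ and simple poles at $p_2,p_4$; hence $\frac{dY}{Y}$ is holomorphic on $\Sigma'_k$ with simple poles exactly along $\mathbf D$, confirming that $f_k^*D|_{\Sigma'_k}$ is a genuine holomorphic connection. The two cases now differ only by the parity of $\frac{k-1}{2}$.

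For $k$ odd, $\frac{k-1}{2}\in\Z$, so $G:=\mathrm{diag}\big(Y^{-(k-1)/2},\,Y^{(k-1)/2}\big)$ is a single--valued meromorphic automorphism of ${\mathcal O}^{\oplus2}_{\Sigma_k}$ of determinant $1$, holomorphic and invertible on $\Sigma'_k$. As $f_k^*D$ is diagonal, $G$ acts on it only through $G^{-1}dG=-\frac{k-1}{2}\,\mathrm{diag}(1,-1)\,\frac{dY}{Y}$, which exactly cancels the connection form; thus $f_k^*D$ is gauged to $d$ and has trivial monodromy. For $k$ even, $\frac{k-1}{2}\notin\Z$ and this $G$ is multivalued with ${\mathbb Z}/2$ monodromy. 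To repair it I would set $\mathcal L={\mathcal O}_{\Sigma_k}$ with $\nabla^{\mathcal L}:=d+\tfrac12\frac{dY}{Y}$, a logarithmic connection with residues $\pm\tfrac12$ along $\mathbf D$ and monodromy $\gamma\mapsto(-1)^{n_\gamma}$, where $n_\gamma$ is the winding number of $Y$ along $\gamma$; its image is exactly $\{\pm1\}$. Tensoring shifts both diagonal entries of $f_k^*D$ by $\tfrac12\frac{dY}{Y}$, turning the exponents into the \emph{integers} $\tfrac{k}{2}$ and $-\tfrac{k-2}{2}$, so $G:=\mathrm{diag}\big(Y^{-k/2},\,Y^{(k-2)/2}\big)$ is now single--valued, holomorphic and invertible on $\Sigma'_k$, and gauges $(f_k^*D)\otimes\nabla^{\mathcal L}$ to $d$; hence this tensor product has trivial monodromy.

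The hard part is the even case, and the delicate points to verify there are: that the twist $\nabla^{\mathcal L}$ is a bona fide logarithmic connection on the trivial bundle (its residues sum to $0=-\deg\mathcal L$, consistent with the residue theorem for $\frac{dY}{Y}$), that tensoring renders both exponents integral so that $G$ is genuinely single--valued, and that the monodromy image of $\nabla^{\mathcal L}$ is exactly $\{\pm1\}$ rather than trivial. Everything else is the routine check that a diagonal gauge whose logarithmic derivatives match the connection form cancels it.
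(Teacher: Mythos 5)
Your proof is correct. In the odd case your gauge $G=\mathrm{diag}\bigl(Y^{-(k-1)/2},Y^{(k-1)/2}\bigr)$ is exactly the parallel frame $\Psi$ used in the paper, and the identity $f_k^*\bigl(d\log\tfrac{z^2-1}{z^2+1}\bigr)=k\,d\log Y$ is the same computation the paper invokes, so part (1) is essentially identical. For part (2) you take a genuinely different route: the paper constructs $(\mathcal L,\nabla^{\mathcal L})$ by passing to the double cover $\delta\colon\widetilde\Sigma\to\Sigma_k$ determined by the two values of $Y^{(k-1)/2}$, twisting $\mathcal O_{\widetilde\Sigma}$ by a sign of the Galois action, and taking the invariant direct image; this yields a line bundle of degree $-2$ (with $\delta^*\mathcal L=\mathcal O_{\widetilde\Sigma}(-q_1-\cdots-q_4)$) whose logarithmic connection has residue $+\tfrac12$ at every $p_i$. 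You instead put the connection $d+\tfrac12\,d\log Y$ on the \emph{trivial} line bundle, which has residues $+\tfrac12$ at $p_1,p_3$ and $-\tfrac12$ at $p_2,p_4$. Both choices have monodromy exactly $\{\pm1\}$ (for yours, a small loop around $p_1$ already gives $-1$, which you should state explicitly) and both render the tensored connection diagonal with integer exponents, so either proves the proposition as stated; your version is more elementary and entirely explicit, at the cost of producing a pair $(\mathcal L,\nabla^{\mathcal L})$ that is \emph{not} the same object as the paper's (different degree, different residues), only the same flat line bundle on $\Sigma'_k$. Since the same $(\mathcal L,\nabla^{\mathcal L})$ is reused in Propositions \ref{FRS} and \ref{component-pull-back}, whose conclusions depend only on the restriction to $\Sigma'_k$, and since your gauge $\mathrm{diag}\bigl(Y^{-k/2},Y^{(k-2)/2}\bigr)$ conjugates off-diagonal Higgs fields by the same factors $Y^{\pm(k-1)}$ as in Lemma \ref{strong-pb}, the substitution is harmless downstream, but that compatibility is something your write-up should acknowledge rather than leave implicit.
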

\begin{remark}
Throughout the paper we use the convention that the tensor product 
of two connections $\nabla^1$ on $V^1$ and $\nabla^2$ on $V^2$ is the connection on $V^1\otimes V^2$ given by the operator
\[\nabla^1\otimes\nabla^2:=\nabla^1\otimes{\rm Id}+{\rm Id}\otimes\nabla^2.\]
\end{remark}
\begin{proof}
Equation \eqref{Sigma_k} gives that
\[d\log Y\,=\, \tfrac{1}{k}d\log\frac{Z^2-1}{Z^2+1}\, .\]
Thus, for $k$ odd,
\begin{equation}\label{ps}
\Psi\,=\,\begin{pmatrix} Y^{-\tfrac{k-1}{2}}&0\\0 &Y^{\tfrac{k-1}{2}}\end{pmatrix}
\end{equation}
is a well-defined global meromorphic frame of ${\mathcal O}^{\oplus 2}_{\Sigma_{k}}$ that satisfies the following:
\begin{enumerate}
\item the restriction $\Psi\vert_{\Sigma'_{k}}$ is a holomorphic frame of ${\mathcal O}^{\oplus 2}_{\Sigma'_{k}}$,

\item $\Psi\vert_{\Sigma'_{k}}$ is a parallel frame for the holomorphic connection
$(f_k^*D)\vert_{\Sigma'_{k}}$.
\end{enumerate}
To show that $\Psi\vert_{\Sigma'_{k}}$ is indeed parallel note that
$\frac{4z dz}{z^4-1}\,=\,\frac{dz}{z-1}-\frac{dz}{z+\sqrt{-1}}+\frac{dz}{z+1}-\frac{dz}{z-\sqrt{-1}}$. 
Let $G$ be the automorphism of
${\mathcal O}^{\oplus 2}_{\Sigma'_{k}}$ that takes the standard frame to the frame $\Psi\vert_{\Sigma'_{k}}$.
Then $G$ gauges $(f_k^*D)\vert_{\Sigma'_{k}}$ to
the trivial connection on ${\mathcal O}^{\oplus 2}_{\Sigma'_{k}}$, because $\Psi\vert_{\Sigma'_{k}}$ is a parallel frame for
$(f_k^*D)\vert_{\Sigma'_{k}}$. This proves the proposition for odd $k$. 

If $k$ is even, $\Psi$ in \eqref{ps} is no longer single valued. Nevertheless, we can still recover the
trivial connection on ${\mathcal O}^{\oplus 2}_{\Sigma'_{k}}$ by twisting the pull-back of $D$ to $\Sigma_k$
by an appropriate line bundle connection. The construction goes as follows. 
The values of $Y^{\tfrac{k-1}{2}}$ produce a nontrivial double covering
$$
\delta\, :\, \widetilde{\Sigma}\, \longrightarrow\, \Sigma_k
$$
branched over the subset $\{p_1,\, p_2,\, p_3,\, p_4\}$ in \eqref{p4}. Let
$$
\widetilde{\Sigma}'\,:=\, \delta^{-1}(\Sigma'_{k})\, \subset\, \widetilde{\Sigma}.
$$
So $\delta\vert_{\widetilde{\Sigma}'}\, :\, \widetilde{\Sigma}'
\, \longrightarrow\, \Sigma'_{k}$ is an unramified double covering. Now $\Psi$ produces a meromorphic frame $\widetilde{\Psi}$ of $\delta^*{\mathcal O}^{\oplus 2}_{\Sigma_{k}}\,=\,
{\mathcal O}^{\oplus 2}_{\widetilde{\Sigma}}$ such that
the restriction of $\widetilde{\Psi}$ to $\widetilde{\Sigma}'$ is a holomorphic frame of
${\mathcal O}^{\oplus 2}_{\widetilde{\Sigma}'}$. This frame $\widetilde{\Psi}\vert_{\widetilde{\Sigma}'}$
is parallel
for the flat connection $(\delta^*f^*_kD)\vert_{\widetilde{\Sigma}'}$
on ${\mathcal O}^{\oplus 2}_{\widetilde{\Sigma}'}$.

The Galois group $\text{Gal}(\delta)\,=\, {\mathbb Z}/2{\mathbb Z}$ for $\delta$ has a natural action on
$\delta^*{\mathcal O}^{\oplus 2}_{\Sigma_{k}}\,=\,
{\mathcal O}^{\oplus 2}_{\widetilde{\Sigma}}$. The action of the nontrivial element of
$\text{Gal}(\delta)\,=\, {\mathbb Z}/2{\mathbb Z}$ evidently takes the frame $\widetilde{\Psi}$
to $-\widetilde{\Psi}$. Therefore, the  holomorphic frame $\widetilde{\Psi}$ of ${\mathcal O}^{\oplus 2}_{\widetilde{\Sigma}'}$ does not
descend to a holomorphic frame of ${\mathcal O}^{\oplus 2}_{\Sigma'_{k}}.$
Note that the action of $\text{Gal}(\delta)$ on
${\mathcal O}^{\oplus 2}_{\widetilde{\Sigma}}$ preserves the logarithmic connection $\delta^*f^*_kD$.
We will now construct a suitable twist of $\wt \Psi$ that descends.

Consider the holomorphic line bundle
$$
\widetilde{\mathcal L}\, :=\, {\mathcal O}_{\widetilde{\Sigma}}
$$
equipped with the following action of $\text{Gal}(\delta)$: the nontrivial element $\alpha
\, \in\, \text{Gal}(\delta)$ acts as multiplication
by $-1$ over the involution $\alpha$, meaning $f\, \longmapsto\, -f\circ\alpha$,  for any locally
defined holomorphic function $f$ on $\widetilde{\Sigma}$.
(The notation $\widetilde{\mathcal L}$ is used for emphasizing the nontrivial action of $\text{Gal}(\delta)$.)
It has a holomorphic connection defined by the de Rham
differential; this connection, which  will be denoted by $\nabla^{\widetilde{\mathcal L}}$, is
preserved by the action of $\text{Gal}(\delta)$ on $\widetilde{\mathcal L}.$

Now consider the holomorphic vector bundle
\begin{equation}\label{cf}
{\mathcal F}\, :=\,
(\delta^*{\mathcal O}^{\oplus 2}_{\Sigma_{k}}) \otimes \widetilde{\mathcal L}\,=\,
{\mathcal O}^{\oplus 2}_{\widetilde{\Sigma}}\otimes \widetilde{\mathcal L}
\end{equation}
on $\widetilde{\Sigma}$. It has the meromorphic frame $\widetilde{\Psi}\otimes {\mathbf 1}$, where
$\mathbf 1$ denotes the constant function $1$. This frame $\widetilde{\Psi}\otimes {\mathbf 1}$
is holomorphic over $\widetilde{\Sigma}'$ and it is preserved by the action of $\text{Gal}(\delta)$ on ${\mathcal F}$ (recall that $\alpha\, \in
\text{Gal}(\delta)$ acts
as multiplication by $-1$ on both $\widetilde{\Psi}$ and ${\mathbf 1}$). With respect to the product connection
\begin{equation}\label{nf}
\nabla^{\mathcal F}\, :=\, (\delta^*f^*_kD)\otimes\nabla^{\widetilde{\mathcal L}}
\end{equation}
on ${\mathcal F}$, the holomorphic frame $(\widetilde{\Psi}\otimes {\mathbf 1})\vert_{\widetilde{\Sigma}'}$
is in fact parallel
on ${\mathcal F}\vert_{\widetilde{\Sigma}'}$.
The actions of $\text{Gal}(\delta)$ on $\widetilde{\mathcal L}$ and
$\delta^*{\mathcal O}^{\oplus 2}_{\Sigma_{k}}$ together produce an action of $\text{Gal}(\delta)$ on
the vector bundle $\mathcal F$ in \eqref{cf}. The logarithmic connection $\nabla^{\mathcal F}$
in \eqref{nf} is evidently invariant under this action of $\text{Gal}(\delta)$ on $\mathcal F$.

Define the invariant direct image
$$
{\mathcal L}\, :=\, (\delta_*\widetilde{\mathcal L})^{\text{Gal}(\delta)}\, \subset\,
\delta_*\widetilde{\mathcal L}
$$
for the action of $\text{Gal}(\delta)$ on $\delta_*\widetilde{\mathcal L}$.
It is a holomorphic line bundle on $\Sigma_k$ such that $\delta^*{\mathcal L}\,=\,
{\mathcal O}_{\widetilde{\Sigma}}(-q_1-q_2-q_3-q_4)$, where $q_i\,\in\, \widetilde{\Sigma}$ satisfies
$\delta(q_i)\,=\, p_i$. The connection $\nabla^{\widetilde{\mathcal L}}$
on $\widetilde{\mathcal L}$, being preserved by the action of $\text{Gal}(\delta)$ on
$\widetilde{\mathcal L}$, produces a logarithmic connection $\nabla^{\mathcal L}$ on ${\mathcal L}$; its
residue is $\tfrac{1}{2}$ at each marked point $p_i$.
Since the logarithmic connection $\nabla^{\widetilde{\mathcal L}}$ has trivial monodromy representation, and
the residues of $\nabla^{\mathcal L}$ are $\tfrac{1}{2}$, it follows that
the image of the monodromy homomorphism for the above logarithmic connection
$\nabla^{\mathcal L}$ on ${\mathcal L}$ is exactly $\{\pm 1\}\, \subset\, {\mathbb C}^*$.

The above construction of ${\mathcal L}$ from $\widetilde{\mathcal L}$ shows that
the pull-back bundle $\delta^*({\mathcal O}^{\oplus 2}_{\Sigma_{k}}\otimes {\mathcal L})\, =\,
\delta^*{\mathcal L}^{\oplus 2}$ is holomorphically isomorphic to ${\mathcal F}
\otimes{\mathcal O}_{\widetilde{\Sigma}}(-q_1-q_2-q_3-q_4)$ (see \eqref{cf})
by a $\text{Gal}(\delta)$--equivariant holomorphic isomorphism.

The logarithmic connection $\nabla^{\mathcal F}$ in \eqref{nf} descends to a logarithmic
connection on ${\mathcal O}^{\oplus 2}_{\Sigma_{k}}\otimes {\mathcal L}$, because $\nabla^{\mathcal F}$
is preserved by the action $\text{Gal}(\delta)$ on ${\mathcal F}$. This descended logarithmic
connection on ${\mathcal O}^{\oplus 2}_{\Sigma_{k}}\otimes {\mathcal L}$ clearly coincides with
$f^*_kD \otimes\nabla^{\mathcal L}.$

The meromorphic frame $\widetilde{\Psi}\otimes {\mathbf 1}$ of $\mathcal F$ descends to
a holomorphic frame of $({\mathcal O}^{\oplus 2}_{\Sigma_{k}}\otimes {\mathcal L})\vert_{\Sigma'_{k}}$
because $\widetilde{\Psi}\otimes {\mathbf 1}$ is preserved by the action of
$\text{Gal}(\delta)$. It was observed above that the holomorphic frame
$(\widetilde{\Psi}\otimes {\mathbf 1})\vert_{\widetilde{\Sigma}'}$ of
$({\mathcal O}^{\oplus 2}_{\Sigma_{k}}\otimes {\mathcal L})\vert_{\Sigma'_{k}}$ is parallel
with respect to the holomorphic connection $\nabla^{\mathcal F}\vert_{\widetilde{\Sigma}'}$ in \eqref{nf}.
Consequently, the holomorphic frame of
$({\mathcal O}^{\oplus 2}_{\Sigma_{k}}\otimes {\mathcal L})\vert_{\Sigma'_{k}}$ given by
$\widetilde{\Psi}\otimes {\mathbf 1}$
is parallel with respect to the holomorphic connection $f^*_kD\otimes\nabla^{\mathcal L}$
on $({\mathcal O}^{\oplus 2}_{\Sigma_{k}}\otimes {\mathcal L})\vert_{\Sigma'_{k}}$, completing the proof.
\end{proof}

The parabolic structure on ${\mathcal O}^{\oplus 2}_{{\mathcal S}_4}$ induced by the logarithmic connection $D$
in \eqref{eq:D} admits the strongly parabolic Higgs field
\begin{equation}\label{Phi}
\Phi\,=\,\begin{pmatrix}0& \frac{dz}{z-1}- \frac{dz}{z+1}\\ \frac{dz}{z-\sqrt{-1}}-
\frac{dz}{z+\sqrt{z-1}}&0\end{pmatrix}.
\end{equation}
The following lemma states that the singularities of $\Phi$ have the same behavior
under pull-back and gauge transformation as the connection $D$ itself.

\begin{lemma}\label{strong-pb}
Let $\Phi$ be the strongly parabolic Higgs field defined in \eqref{Phi}, $f_k$ the projection from $\Sigma_k$ to
$\mathcal S_4$ in \eqref{fk} and $G$ the gauge transformation in 
Proposition \ref{red}. Then
$$G^{-1}\circ f_k^*\Phi\circ G$$ extends to a holomorphic Higgs field on the trivial
holomorphic bundle ${\mathcal O}^{\oplus 2}_{\Sigma_{k}}$ over $\Sigma_k$.
\end{lemma}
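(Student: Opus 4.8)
The plan is to compute $G^{-1}\circ f_k^*\Phi\circ G$ explicitly and to read off that both of its off-diagonal entries are holomorphic $1$-forms on all of $\Sigma_k$. In the frame $\Psi$ of \eqref{ps} the gauge $G$ is represented by the diagonal matrix $\mathrm{diag}\big(Y^{-(k-1)/2},\,Y^{(k-1)/2}\big)$, while $f_k^*\Phi$ is off-diagonal; hence the conjugation simply multiplies the $(1,2)$- and $(2,1)$-entries by $Y^{k-1}$ and $Y^{1-k}$, respectively. The crucial bookkeeping point is that, even though $(k-1)/2$ is a half-integer for even $k$ (so that $\Psi$ itself lives only on the double cover $\widetilde\Sigma$ of Proposition \ref{red}), the exponents $\pm(k-1)$ are integers, so $Y^{\pm(k-1)}$ are genuine meromorphic functions on $\Sigma_k$ and the computation is identical in both parities. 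Writing $\omega_1:=\frac{dZ}{Z-1}-\frac{dZ}{Z+1}=\frac{2\,dZ}{Z^2-1}$ and $\omega_2:=\frac{dZ}{Z-\sqrt{-1}}-\frac{dZ}{Z+\sqrt{-1}}=\frac{2\sqrt{-1}\,dZ}{Z^2+1}$, this gives
\[
G^{-1}\circ f_k^*\Phi\circ G\,=\,\begin{pmatrix}0 & Y^{k-1}\omega_1\\ Y^{1-k}\omega_2 & 0\end{pmatrix}.
\]

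Next I would feed in the defining equation \eqref{Sigma_k}, i.e.\ $Z^2-1=Y^k(Z^2+1)$, to rewrite each entry in two equivalent forms,
\[
Y^{k-1}\omega_1\,=\,\frac{2\,dZ}{Y(Z^2+1)},\qquad Y^{1-k}\omega_2\,=\,\frac{2\sqrt{-1}\,Y\,dZ}{Z^2-1}.
\]
On $\Sigma'_k$ both $f_k^*\Phi$ and $G$ are holomorphic and $G$ is invertible (over the unramified fibre $f_k^{-1}(\infty)$ the value of $Y$ is a $k$-th root of unity), so the only points where holomorphicity must be checked are the four totally ramified branch points $p_1,\dots,p_4$. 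At $p_1=(0,1)$ and $p_3=(0,-1)$ the function $Y$ is a local coordinate with $Z\mp 1=Y^{k}\cdot(\text{unit})$, so $dZ$ vanishes to order $k-1$ while $Z^2+1$ is a unit; the first form then shows the $(1,2)$-entry vanishes to order $k-2\ge 1$, and $\frac{2\sqrt{-1}\,Y\,dZ}{Z^2-1}$ shows the $(2,1)$-entry is a unit times $dY$. At $p_2=(\infty,\sqrt{-1})$ and $p_4=(\infty,-\sqrt{-1})$ the function $W=1/Y$ is a local coordinate with $Z\mp\sqrt{-1}=W^{k}\cdot(\text{unit})$, and a symmetric computation (with the roles of the two entries exchanged) shows the $(1,2)$-entry is a unit times $dW$ and the $(2,1)$-entry vanishes to order $k-2$. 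In every case the entry is a holomorphic $1$-form, so $G^{-1}\circ f_k^*\Phi\circ G$ extends holomorphically across $p_1,\dots,p_4$; being off-diagonal it is automatically trace-free, hence a holomorphic Higgs field on $\mathcal{O}^{\oplus 2}_{\Sigma_k}$.

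For even $k$ one extra remark closes the argument: the descent of $\widetilde\Psi\otimes\mathbf 1$ through the line bundle $\mathcal L$ of Proposition \ref{red} does not alter the conjugated Higgs field, because $\mathrm{End}_0$ is insensitive to tensoring by a line bundle and, as noted, the resulting matrix only involves the $\mathrm{Gal}(\delta)$-invariant integer powers $Y^{\pm(k-1)}$, which already descend to $\Sigma_k$. I expect the one step demanding genuine care to be the local coordinate analysis at the ramification points: one must verify that the order $k-1$ vanishing of $dZ$, together with the gauge factor $Y^{\pm(k-1)}$, exactly compensates the simple pole that $f_k^*\Phi$ acquires at each $p_i$, so that a holomorphic $1$-form survives rather than a genuine pole.
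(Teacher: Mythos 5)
Your proof is correct and follows essentially the same route as the paper: conjugation by $G$ multiplies the off-diagonal entries of $f_k^*\Phi$ by $Y^{\pm(k-1)}$, and a local order count at the four totally ramified points shows the simple pole and the order-$(k-1)$ vanishing of $dZ$ are exactly compensated, with the even-$k$ case reducing to the same computation because only integer powers of $Y$ appear. Your use of the curve equation $Z^2-1=Y^k(Z^2+1)$ to rewrite the entries is a pleasant streamlining of the paper's direct pole/zero bookkeeping, but not a different argument.
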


\begin{proof}
As before we  have to distinguish between even $k$ and odd $k.$ For odd $k$ it is evident that
$G^{-1}\circ f_k^*\Phi\circ G$ is a holomorphic Higgs field on $\Sigma'_k$ with respect to the trivial holomorphic structure induced by
$d\,=\,(f_k^*D).G$. We have to show that $G^{-1}\circ f_k^*\Phi\circ G$ is holomorphic
at  the branch points of $f_k.$
Consider $p_1\,=\,f_k^{-1}(1).$ Then the pull-back of $\Phi$, considered as an endomorphism-valued 1-form, is
meromorphic and  of the form 
\[f_k^*\Phi\,=\,\begin{pmatrix}0&b\\c&0\end{pmatrix}.\] 
The diagonal entries of the pull-back vanish identically, while the lower left entry $c$ has a zero of order
$k-1$ at $p_1$ as $f_k$ is totally branched. The upper right entry $b$ has a pole of order 1  at $p_1$.
Since the meromorphic function $Y$ (of degree 2) on $\Sigma_k$ (see (\ref{Sigma_k})) has a zero of order 1 at
$p_1$, and $k\,>\,1,$
\[G^{-1}\circ f_k^*\Phi\circ G\,=\,\begin{pmatrix} 0& b Y^{k-1}\\ cY^{-k+1}&0\end{pmatrix}\]
is holomorphic  at $p_1$.
The same argument works for the other branch points $p_2,\,p_3,\,p_4$ of $f_k$ showing that
$G^{-1}\circ f_k^*\Phi\circ G$ is a holomorphic Higgs field on the trivial holomorphic bundle.

When $k$ is even, we consider 
\[f_k^*\Phi \,\cong \, f_k^*\Phi \otimes \mathbf{1}\]
as an  endomorphism-valued 1-form on the vector bundle
${\mathcal O}^{\oplus 2}_{\Sigma_{k}}\otimes {\mathcal L}$
over $\Sigma'_k.$ It is holomorphic with respect to the holomorphic structure induced by the connection
 $(f_k^*D)\otimes\nabla^{\mathcal L}.$
The same arguments as for $k$ odd then show that
$G^{-1}\circ f_k^*\Phi\circ G$ extends to a holomorphic endomorphism-valued 1-form on the
trivial bundle ${\mathcal O}^{\oplus 2}_{\Sigma_{k}}$ over $\Sigma_k.$
\end{proof}

The  following proposition and its proof are similar to some results about symmetric minimal
surfaces in the 3-sphere \cite[Section 3.3]{HHSch}.

\begin{proposition}\label{FRS}
Let $k\,\in\, \N^{>2}$ and $\wt\rho\,=\, \frac{k-1}{2k}.$ Consider the 
logarithmic connection $\wt{\nabla}$ on ${\mathcal O}^{\oplus 2}_{{\mathcal S}_4}$
given in \eqref{wtnabla} and its pull-back $f_k^*\wt\nabla$ on ${\mathcal O}^{\oplus 2}_{\Sigma_k}$
with polar part in $\mathbf D=p_1+p_2+p_3+p_4$.
Then the parabolic structure associated to $\wt \nabla$ is unstable. Furthermore,

\begin{enumerate}
\item if  $k$ is odd 
\begin{itemize}
\item there exists a flat $C^\infty$ connection on ${\mathcal O}^{\oplus 2}_{\Sigma_{k}}\, 
\longrightarrow\,\Sigma_k$ which is $C^\infty$ gauge equivalent to
$(f_k^*\wt\nabla)\vert_{\Sigma'_{k}}$ over $\Sigma'_{k}$. In particular, $f_k^*\wt\nabla$ has trivial local 
monodromy around the singular points $\{ p_1,\, p_2,\, p_3,\, p_4\}$;
\item the monodromy homomorphism of  $f_k^*\wt\nabla$ is the one of the uniformizing hyperbolic structure of $\Sigma_{k}$, in particular it is Fuchsian.  
\end{itemize}

\item If $k$ is even, then there is a holomorphic line bundle ${\mathcal L}$ over $\Sigma_{k}$ with a logarithmic
connection $\nabla^{\mathcal L}$ with polar part in $\mathbf D=p_1+p_2+p_3+p_4$ such that
\begin{itemize}

\item the image of the monodromy homomorphism for $\nabla^{\mathcal L}$ is
$\{\pm 1\}\, \subset\, {\mathbb C}^*$;

\item
 there exists  a
 $C^\infty$ vector bundle isomorphism 
 $$
G\, :\, 
{\mathcal O}^{\oplus 2}_{\Sigma'_{k}}\, \longrightarrow\,
{\mathcal O}^{\oplus 2}_{\Sigma'_{k}}\otimes {\mathcal L}\, =\,
{\mathcal L}^{\oplus 2} 
$$
 over $\Sigma'_{k}\, 
\subset\, \Sigma_{k}$ 
which gauges $((f_k^*\wt\nabla)\otimes\nabla^{\mathcal 
L})\vert_{\Sigma'_{k}}$
to
 a $C^\infty$ flat connection $\widehat{\nabla}$ with Fuchsian monodromy on the trivial bundle over
${\Sigma_{k}}$;

\item   $f_k^*\wt\nabla\otimes \nabla^{\mathcal 
L}$ on ${\mathcal L}^{\oplus 2}$ has trivial local monodromy around the 
singular points $\{ p_1,\,p_2,\,p_3,\, p_4\}$, and its monodromy representation
coincides with the monodromy homomorphism of the uniformizing hyperbolic structure of $\Sigma_{k}$.
\end{itemize}
\end{enumerate}
\end{proposition}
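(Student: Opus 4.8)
```latex
The plan is to reduce everything about $f_k^*\wt\nabla$ to the already-proved facts about $\wt\nabla$ on ${\mathcal S}_4$ from Lemma \ref{lem:realSU4}, together with the same singular-gauge mechanism used for $D$ in Proposition \ref{red}. First I would verify that the parabolic structure associated to $\wt\nabla$ is unstable. The connection $\wt\nabla$ in \eqref{wtnabla} has off-diagonal residue matrices at the four marked points, so its residues have eigenvalues $\pm\wt\rho$ with $\wt\rho=\frac{k-1}{2k}\in(\tfrac14,\tfrac12)$; one computes the quasiparabolic lines from the residue eigenvectors. Because $\wt\nabla$ arises from a uniformization bundle (the nontrivial extension of $L^{-1}$ by $L$ for a theta characteristic, as recalled in the Introduction), there is a holomorphic line subbundle $W\subset{\mathcal O}^{\oplus 2}_{{\mathcal S}_4}$ of degree $0$ whose parabolic degree is strictly positive once the weights $\wt\rho>\tfrac14$ are summed; exhibiting this destabilizing $W$ explicitly from the shape of \eqref{wtnabla} gives instability.

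Next I would treat the monodromy. By Lemma \ref{lem:realSU4}, for $\wt\rho=\frac{k-1}{2k}$ the monodromy of $\wt\nabla$ on ${\mathcal S}_4$ is the uniformizing orbifold monodromy with angle $\tfrac{2\pi}{k}$ at each of the four marked points, i.e.\ the local monodromy around each marked point is a rotation of order $k$. The covering $f_k$ is totally branched of order $k$ over exactly these four points, so the pulled-back loop around a branch point is the $k$-th power of the original loop; hence the local monodromy of $f_k^*\wt\nabla$ around each $p_i$ is trivial. This simultaneously establishes the ``trivial local monodromy'' bullets and, by the orbifold covering correspondence, identifies the global monodromy of $f_k^*\wt\nabla$ with the uniformizing \emph{Fuchsian} representation of the smooth genus $k-1$ surface $\Sigma_k$ — exactly the statement that the monodromy is Fuchsian.

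The remaining work is the gauge/descent bookkeeping, which is parallel to Proposition \ref{red} but now in the $C^\infty$ category because $\wt\nabla$ is not gauge-equivalent to $d$ algebraically (its monodromy is nontrivial). For odd $k$ I would produce a global $C^\infty$ frame, singular only in its meromorphic algebraic avatar at the $p_i$ but smoothable because the local monodromy there is trivial; conjugating by the resulting $C^\infty$ gauge transformation $G$ turns $(f_k^*\wt\nabla)\vert_{\Sigma_k'}$ into a flat $C^\infty$ connection that extends across the (now regular) punctures to a flat $C^\infty$ connection on ${\mathcal O}^{\oplus 2}_{\Sigma_k}$, with the same Fuchsian monodromy. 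For even $k$ the frame $Y^{(k-1)/2}$ is again multivalued, so I would introduce the same auxiliary line bundle ${\mathcal L}$ with connection $\nabla^{\mathcal L}$ of residue $\tfrac12$ and monodromy $\{\pm1\}$ constructed via the double cover $\delta$ in Proposition \ref{red}; tensoring absorbs the sign ambiguity, and the product $f_k^*\wt\nabla\otimes\nabla^{\mathcal L}$ descends to a flat $C^\infty$ connection $\widehat\nabla$ on the trivial bundle with Fuchsian monodromy.

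I expect the main obstacle to be the instability claim and, more subtly, controlling the \emph{smoothness} (rather than mere flatness) of the extension across the branch points in the $C^\infty$ gauge. The local monodromy being trivial guarantees a single-valued flat frame near each $p_i$, but one must check that the singular algebraic gauge $G$ — whose entries involve $Y^{\pm(k-1)/2}$, genuinely singular at the $p_i$ — combines with the vanishing/pole orders of $\wt\nabla$ (as tracked for $\Phi$ in Lemma \ref{strong-pb}) so that the conjugated connection has \emph{bounded, smooth} coefficients there, yielding a $C^\infty$ (not merely meromorphic) connection on the trivial bundle. This order-of-vanishing matching at the totally branched points, together with the $\text{Gal}(\delta)$-equivariance in the even case, is where the real care is needed.
```
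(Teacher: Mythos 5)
Your overall strategy matches the paper's: destabilizing subbundle for instability, Lemma \ref{lem:realSU4} plus the orbifold covering correspondence for the Fuchsian monodromy, a singular diagonal gauge as in Proposition \ref{red} for the desingularization, and the line bundle twist for even $k$. However, there are two concrete errors and one unexecuted central step. First, your destabilizing subbundle cannot have degree $0$: a degree-$0$ line subbundle of ${\mathcal O}^{\oplus 2}_{\CP^1}$ is a constant line, and the four quasiparabolic lines of $\wt\nabla$ are the pairwise distinct lines $\C\cdot(x,1)$ for $x\in\{1,\sqrt{-1},-1,-\sqrt{-1}\}$, so a constant line meets at most one of them and its parabolic degree is at most $\wt\rho-3\wt\rho<0$. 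The paper's destabilizer is the tautological subbundle $\mathbb L$ of degree $-1$, which passes through all four quasiparabolic lines and has $\text{par-deg}(\mathbb L)=-1+4\wt\rho>0$ precisely because $\wt\rho>\tfrac14$. Second, your claim that the local monodromy of $f_k^*\wt\nabla$ around each $p_i$ is trivial is false for even $k$: the local monodromy of $\wt\nabla$ is conjugate to $\mathrm{diag}(e^{-2\pi\sqrt{-1}\wt\rho},e^{2\pi\sqrt{-1}\wt\rho})$ with $\wt\rho=\tfrac{k-1}{2k}$, whose $k$-th power is $(-1)^{k-1}I$, hence $-I$ for even $k$. This is exactly why the even case requires tensoring with $\nabla^{\mathcal L}$ (whose local monodromy is $-1$) before the local monodromy becomes trivial — the twist is not merely bookkeeping for a multivalued frame, and the statement you are proving carefully attributes trivial local monodromy to $f_k^*\wt\nabla\otimes\nabla^{\mathcal L}$, not to $f_k^*\wt\nabla$, when $k$ is even.

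Third, the step you flag as ``where the real care is needed'' is the actual content of the paper's proof, and you do not carry it out. The paper works in the $C^\infty$ splitting ${\mathcal O}^{\oplus 2}_{\CP^1}=\mathbb L\oplus\mathbb L^\perp$ and applies the diagonal gauge $h_l=\mathrm{diag}(y^{-(k-1)/2},y^{(k-1)/2})$ \emph{with respect to this splitting}, not the standard frame. Smooth extension across $p_l$ then hinges on the fact that the second fundamental form $\varphi$ of $\mathbb L$ is holomorphic on all of $\CP^1$ (because $\mathbb L_{q}$ is a residue eigenline at each singular point), is nowhere vanishing (else $\mathbb L$ would be $\wt\nabla$-invariant, contradicting $\text{par-deg}(\mathbb L)\neq 0$), and can be normalized to $1$; then $f_k^*\varphi$ vanishes to order exactly $k-1$ at $p_l$, so $y^{1-k}f_k^*\varphi$ extends smoothly and non-vanishingly while the opposite entry is multiplied by $y^{k-1}$. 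This also yields the degree computation $\deg(h(f_k^*\mathbb L))=k-2=g-1$, which is how maximality of the Euler class is verified. Without identifying the correct splitting and the non-vanishing of $\varphi$, the smoothness of the gauged connection does not follow from trivial local monodromy alone.
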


\begin{remark} The holomorphic line bundle $\mathcal L$ and the logarithmic connection $\nabla^{\mathcal L}$ in the statement of Proposition \ref{FRS} (2) are the
same as in the statement of Proposition \ref{red} (2).
\end{remark}

\begin{proof}[{Proof of Proposition \ref{FRS}}]
The logarithmic connection $\wt\nabla$ on ${\mathcal S}_4$ in \eqref{wtnabla} is 
\begin{equation}\label{wtnabla2}
\wt\nabla\,=\,d+\begin{pmatrix}0&\wt\rho z^{-1} \\\wt\rho z&0 \end{pmatrix} 
\left( \frac{dz}{z-1}-\frac{dz}{z+\sqrt{-1}}+\frac{dz}{z+1}-\frac{dz}{z-\sqrt{-1}} \right).
\end{equation}
At each point of the singular locus $\{1,\,-1,\, \sqrt{-1},\, -\sqrt{-1}\}$ the eigenvalues of the residue
of $\wt\nabla$ are $\wt\rho$ and $-\wt\rho$. Using \eqref{wtnabla2} we compute the eigenlines for the positive
eigenvalue $\wt\rho$ of the residues of $\wt\nabla$ at $x\in\{1,\,-1,\, \sqrt{-1},\, -\sqrt{-1}\}$ to be: 
\[l_x= {\mathbb C}\cdot (x,\, 1)\, \subset\, {\mathbb C}^2.\]
Recall from Section \ref{ss:parabolic}
that the quasiparabolic structures at $\{1,\,-1,\, \sqrt{-1},\, -\sqrt{-1}\}$ are given by the
eigenlines for the eigenvalue $\wt\rho$.
Let
\begin{equation}\label{bl}
{\mathcal O}^{\oplus 2}_{\CP^1}\, \supset\,{\mathbb L} \, \longrightarrow\, \CP^1
\end{equation}
be the tautological subbundle whose fiber over any $z\, \in\, \mathbb C$ is ${\mathbb C}\cdot (z,\, 1)$ and the
fiber over $\infty$ is ${\mathbb C}\cdot(1,\, 0)$.
Therefore, at each point $x$ of the singular locus $\{1,\,-1,\, \sqrt{-1},\, -\sqrt{-1}\}$
the subspace ${\mathbb L}_x\, \subset\, ({\mathcal O}^{\oplus 2}_{\CP^1})_x\,=\, {\mathbb C}^2$ coincides with
the eigenline of $\text{Res}_{x}(\wt\nabla)$ with respect to the eigenvalue $\wt\rho$.
Consequently, the parabolic degree of the line subbundle ${\mathbb L}\, \subset\, {\mathcal O}^{\oplus 2}_{\CP^1}$
in \eqref{bl}, with respect to the parabolic structure induced by $\wt\nabla$ is
\begin{equation}\label{spd}
\text{par-deg}({\mathbb L}) \,=\, \text{degree}({\mathbb L})+4\wt\rho\,=\, 4\wt\rho-1 \, >\, 0\,=\,
\text{par-deg}({\mathcal O}^{\oplus 2}_{\CP^1})\, .
\end{equation}
Therefore, ${\mathcal O}^{\oplus 2}_{\CP^1}$ equipped with the parabolic structure given by $\wt\nabla$ is unstable.

Consider the standard inner product on ${\mathbb C}^2$. It produces a constant Hermitian structure
on ${\mathcal O}^{\oplus 2}_{\CP^1}$ which is flat with respect to the
trivial holomorphic connection on ${\mathcal O}^{\oplus 2}_{\CP^1}$. Let $\mathbb L^\perp$ denote the orthogonal
complement of the line subbundle ${\mathbb L}$ in \eqref{bl}, so we have the $C^\infty$ decomposition
\begin{equation}\label{decomposeC2L}
{\mathcal O}^{\oplus 2}_{\CP^1}\,=\,{\mathbb L}\oplus {\mathbb L}^\perp.
\end{equation}
Note that ${\mathbb L}^\perp$ is identified with ${\mathcal O}^{\oplus 2}_{\CP^1}/{\mathbb L} =\,
{\mathbb L}^{^*}$, because $\bigwedge^2{\mathcal O}^{\oplus 2}_{\CP^1}\,=\,
{\mathcal O}_{\CP^1}$. With respect to the decomposition in \eqref{decomposeC2L},
the holomorphic structure of ${\mathcal O}^{\oplus 2}_{\CP^1}$,
which is the same as the $(0,\,1)$-part of the flat connection for $\wt \nabla$, is
\[
\overline{\partial}^{\wt\nabla} \,=\,\begin{pmatrix} \overline{\partial}^{\mathbb L}& \psi\\0& \overline{\partial}^{{\mathbb L}^*}\end{pmatrix}
\]
for some non-trivial $C^\infty$ section $\psi$ of $\overline{K}_{\CP^1}\otimes {\mathbb L}^{\otimes 2}$
over $\CP^1$, where
$\overline{\partial}^{\mathbb L}$ and $\overline{\partial}^{{\mathbb L}^{^*}}$ are the Dolbeault operators for
${\mathbb L}$ and ${\mathbb L}^{^*}$ respectively. The $(1,\, 0)$-part $\partial^{\wt\nabla}$ of $\wt\nabla$ is
\begin{equation}\label{wn10}
\partial^{\wt\nabla}\,=\,\begin{pmatrix} \partial^{\mathbb L}& \alpha\\ \varphi& \partial^{{\mathbb L}^*}\end{pmatrix},
\end{equation}
where
$\partial^{\mathbb L}$ is a $C^\infty$ $(1,\,0)$--connection on the holomorphic line bundle ${\mathbb L}$
over $S_4$ (defined in \eqref{s4}), and $\partial^{{\mathbb L}^{^*}}$ is the dual
$(1,\,0)$--connection on ${\mathbb L}^{^*}\vert_{S_4}$.
Furthermore, in \eqref{wn10} $\alpha$ is a $C^\infty$ section of $K_{\CP^1}\otimes {\mathbb L}^{\otimes 2}$ over $S_4$,
and $\varphi$ is a holomorphic
section of $K_{\CP^1}\otimes ({\mathbb L}^*)^{\otimes 2}$. In fact $\varphi$ is the second fundamental form of
the holomorphic subbundle
${\mathbb L}\, \subset\, {\mathcal O}^{\oplus 2}_{\CP^1}$ for the logarithmic connection $\wt\nabla$.
We note that $\varphi$ is holomorphic over the entire $\CP^1$ because at every singular point $q_l$ of
$\wt\nabla$, the fiber ${\mathbb L}_{q_l}\, \subset\, ({\mathcal O}^{\oplus 2}_{\CP^1})_{q_l}$ is an
eigenline of the residue of $\wt\nabla$.

If $\varphi\,=\, 0$, then the line subbundle $\mathbb L$ 
is preserved by $\wt\nabla$ which gives a contradiction since 
the parabolic degree $\mathbb L$
with respect to the induced parabolic structure is nonzero (see \eqref{spd} and \cite{Oh}). Hence we conclude
that $\varphi\,\not=\, 0$ and, by choosing a suitable holomorphic isomorphism
between $K_{\CP^1}$ and ${\mathbb L}^{\otimes 2}$,  we can normalize $\varphi$ to be the constant function $1$.

Consider the pulled back logarithmic connection $f_k^*\wt\nabla$ on the trivial holomorphic vector bundle 
${\mathcal O}^{\oplus 2}_{\Sigma_{k}}$ over $\Sigma_k$, where $f_k$ is the map in \eqref{fk}.
It is singular over the four branched points $p_1,\,\cdots,\,p_4$ in \eqref{p4}.
\subsection*{Case 1: $k$ is odd}$\;$\\
We desingularize $f_k^*\wt\nabla$ at $p_l$, $1\,\leq\,l\, \leq\, 4$, as follows. Take a holomorphic
coordinate function $z$ defined on an open neighborhood of $f_k(p_l)\,\in\,\CP^1$ with $z(f_k(p_l))\,=\,0$.
Let $y$ be a holomorphic coordinate function defined on an open subset $U_l\, \subset\, \Sigma_k$ 
containing $p_l$ such that $y^k\,=\,z\circ f_k$.
 Consider the meromorphic endomorphism
\begin{equation}\label{hl}
h_l\,=\, \begin{pmatrix} y^{-\frac{k-1}{2}}&0\\0& y^{\frac{k-1}{2}}\end{pmatrix}
\end{equation}
 of ${\mathcal O}^{\oplus 2}_{U_l}\,=\, {\mathcal O}^{\oplus 2}_{\Sigma_k}\Big\vert_{U_l}$.
It is a holomorphic automorphism over
$U'_l:=U_l\setminus\{p_l\}$.
Let $(f_k^*\wt\nabla)\vert_{U'_l}. (h_l\vert_{U'_l})$ be the holomorphic connection on ${\mathcal O}^{\oplus 
2}_{U'_l}$ produced by the action of the gauge transformation $h_l\vert_{U'_l}$ on the connection 
$(f_k^*\wt\nabla)\vert_{U'_l}$.

We claim that $(f_k^*\wt\nabla)\vert_{U'_l}. (h_l\vert_{U'_l})$ extends to a
$C^\infty$ connection on ${\mathcal O}^{\oplus 2}_{U_l}$. To prove the above claim, first note that the upper right entry of the connection
$(f_k^*\wt\nabla)\vert_{U'_l}. (h_l\vert_{U'_l})$ (with respect to the splitting 
$\mathbb L \oplus \mathbb L^\perp$) is multiplied with the function $y^{k-1}$ and is therefore smooth at $p_l$ 
(it vanishes at $p_l$ with some higher order). Moreover, the pull-back $f^*\varphi$ of the non-vanishing 1-form $\varphi$ 
with values in ${\mathcal O}_{\CP^1}(2)$ has vanishing order $k-1$ at $p_l$. Hence, the lower left entry of 
$(f_k^*\wt\nabla)\vert_{U'_l}. (h_l\vert_{U'_l})$ with respect to \eqref{decomposeC2L}, which becomes 
\[y^{1-k}f^*_k\varphi\, ,\] extends smoothly and non-vanishingly to $p_l$. This proves the claim.

Since $(f_k^*\wt\nabla)\vert_{U'_l}. (h_l\vert_{U'_l})$ extends to a $C^\infty$ connection on ${\mathcal 
O}^{\oplus 2}_{U_l}$, the local monodromy of $f_k^*\wt\nabla$ at each $p_l$ is trivial.

Now fix a global $C^\infty$ automorphism
\begin{equation}\label{hl2}
h\, \in\, C^\infty(\Sigma'_{k}, \, \text{Aut}({\mathcal O}^{\oplus 2}_{\Sigma'_{k}}))
\end{equation}

such that $\det h\,=\, 1$ and, for each $1\, \leq\, l\, \leq\, 4$,
it coincides with $h_l$ (see \eqref{hl}) on a neighborhood 
of $p_l$; such a global gauge $h$ does exist. From the above observation that
$(f_k^*\wt\nabla)\vert_{U'_l}. (h_l\vert_{U'_l})$ extends to a
$C^\infty$ connection on ${\mathcal O}^{\oplus 2}_{U_l}$ it follows immediately that $(f_k^*\wt\nabla). h$ is
a $C^\infty$ flat connection on the trivial $C^\infty$ vector bundle
$$ \Sigma_k\times {\mathbb C}^2\, =:\, E^0\, .$$
The holomorphic structure on $E^0$
given by the flat connection $(f_k^*\wt\nabla). h$ is not the trivial holomorphic
structure on ${\mathcal O}^{\oplus 2}_{\Sigma_k}$, as $h$ in \eqref{hl2} is not holomorphic.
In fact, we claim that it is a uniformization bundle on $\Sigma_k.$

Let ${\mathcal E}^0$ denote the holomorphic vector bundle over $\Sigma_k$
given by the holomorphic structure of $(f_k^*\wt\nabla). h$. Since $\det h\,=\, 1$, it follows that
${\mathcal E}^0$ is a holomorphic $\text{SL}(2,{\mathbb C})$--bundle with $(f_k^*\wt\nabla). h$ being a
holomorphic $\text{SL}(2,{\mathbb C})$--connection on it.

Consider the pulled back line bundle
$$
f_k^*{\mathbb L}\, \subset\, f_k^*{\mathcal O}^{\oplus 2}_{\CP^1}\,=\, {\mathcal O}^{\oplus 2}_{\Sigma_k}\, ,
$$
where $\mathbb L$ is the tautological bundle constructed in \eqref{bl}.
Note that $h(f_k^*{\mathbb L})\, \subset\, {\mathcal E}^0\vert_{\Sigma'_k}$ is a holomorphic line subbundle
(recall that $h$ in \eqref{hl2} is defined only on $\Sigma'_k$). Since $h$ is meromorphic near each $p_l$
(as $h_l$ in \eqref{hl} is meromorphic around $p_l$ and $h$
coincides with $h_l$ around $p_l$), we conclude that $h(f_k^*{\mathbb L})=:\widetilde{\mathbb L}$ extends to a
holomorphic subbundle of ${\mathcal E}^0$ over the entire $\Sigma_k$.

For $1\,\leq\, l\, \leq\, 4$ fixed, 
let $s$ be a non-vanishing holomorphic section of ${\mathbb L}$
defined on an open subset $\wt U_l\, \subset\, \CP^1$ around  $f_k(p_l)$. Then the holomorphic section $f^*_k(s\vert_{\wt U_l\setminus\{f_k(p_l)\}})$
of $\widetilde{\mathbb L}\vert_{f^{-1}_k(\wt U_l\setminus\{f_k(p_l)\})}$ extends to a holomorphic section
of $\widetilde{\mathbb L}\vert_{f^{-1}_k(\wt U_l)}$ vanishing at $p_l$ with order $(k-1)/2$. Indeed, this
follows immediately from the expression of $h_l$ in \eqref{hl}. From this we conclude that
\begin{equation}\label{dtl}
\text{degree}(\widetilde{\mathbb L})\,=\, \text{degree}(f_k)\cdot \text{degree}({\mathbb L})+
4\frac{k-1}{2}\,=\, -k + 2k-2\,=\, k-2\,=\, \text{genus}(\Sigma_k)-1\,.
\end{equation}

Lemma \ref{lem:realSU4} then shows that the monodromy representation of  $(f_k^*\wt\nabla). h$ is conjugate to $\SL(2,\R)$ and its Euler class is maximal by \eqref{dtl}. 
More precisely, since the map $f_k$ in (\ref{fk}) is a k-fold covering of  ${\mathcal S}_4$ totally branched over the 4 marked 
points,  Lemma \ref{lem:realSU4} gives that the monodromy representation for 
$(f_k^*\wt\nabla). h$ coincides with the monodromy of the uniformizing hyperbolic structure for $\Sigma_{k}$. 
Therefore, the monodromy homomorphism of the connection $(f_k^*\wt\nabla). h$ coincides with the one given by the 
hyperbolic uniformization of $\Sigma_{k}$.

\subsection*{Case 2: $k$ is even}$\;$\\
Following the same desingularization procedure as in the previous case, consider the local gauge
transformation 
\begin{equation}\label{ke}
h_l\,=\,
\begin{pmatrix} y^{-\frac{k-1}{2}}&0\\0& y^{\frac{k-1}{2}}\end{pmatrix}
\end{equation}
with respect to the pull-back by $f_k$ of the $C^\infty$ decomposition of the rank 2 bundle in
\eqref{decomposeC2L}. As in the proof of point (2) of Proposition \ref{red},
the values of $y^{\frac{k-1}{2}}$ produce a ramified double covering of $\Sigma_k$
$$
\delta\, :\, \widetilde{\Sigma}\, \longrightarrow\, \Sigma_k
$$
which is ramified exactly over the subset $\{p_1,\, p_2,\, p_3,\, p_4\}$ in
\eqref{p4}. As before let
$$
\widetilde{\Sigma}'\,:=\, \delta^{-1}(\Sigma'_{k})\, \subset\, \widetilde{\Sigma}
$$
be the largest open subset such that $\delta\vert_{\widetilde{\Sigma}'}\, :\, \widetilde{\Sigma}'
\, \longrightarrow\, \Sigma'_{k}$ is an unramified double covering.
Let $q_l\,\in \, \widetilde{\Sigma}$, $1\, \leq\, l\, \leq\, 4$, be the points such that $\delta(q_l)\,=\,
p_l$. As in in the proof of part (1), fix a $C^\infty$ automorphism
$$
h\, \, :\, {\mathcal O}^{\oplus 2}_{\widetilde{\Sigma}'}\, \longrightarrow\,
{\mathcal O}^{\oplus 2}_{\widetilde{\Sigma}'}
$$
such that
\begin{itemize}
\item $\det h\,=\, 1$,

\item the action of $\text{Gal}(\delta)\,=\, {\mathbb Z}/2{\mathbb Z}$ on
$\delta^*{\mathcal O}^{\oplus 2}_{\Sigma'_k}\,=\,{\mathcal O}^{\oplus 2}_{\widetilde{\Sigma}'}$
takes $h$ to $-h$, and

\item the restriction of $h$ near each marked point $q_l$ coincides with
\[h_l\,=\,\begin{pmatrix} {\widetilde y}^{1-k}&0\\0& \widetilde{y}^{k-1}\end{pmatrix},\]
where ${\widetilde y}^{2k}\,=\, z\circ f_k\circ\delta$ with $z$ being a holomorphic coordinate
function around $f_k(p_l)\, \in\, \CP^1$ with $z(f_k(p_l))\,=\, 0$.
\end{itemize}

The $C^\infty$ connection $((\delta^*f^*_k \wt\nabla)\vert_{\widetilde{\Sigma}'}). h$ on
${\mathcal O}^{\oplus 2}_{\widetilde{\Sigma}'}$ (considered as  the trivial $C^\infty$ vector bundle)
extends to a flat connection on
$\delta^*{\mathcal O}^{\oplus 2}_{\Sigma_k}\,=\,{\mathcal O}^{\oplus 2}_{\widetilde{\Sigma}} $ preserved by the action of $\text{Gal}(\delta)$ on
$\delta^*{\mathcal O}^{\oplus 2}_{\Sigma_k}$. Hence it induces a $C^\infty$ flat connection on the trivial bundle ${\mathcal O}^{\oplus 2}_{\Sigma_k}$; this flat connection is $\widehat{\nabla}$ in the
statement of the proposition. As before we  emphasize that the  holomorphic structure given by
$\widehat{\nabla}$ does not coincide with the natural holomorphic structure of ${\mathcal O}^{\oplus 2}_{\Sigma_k}$
but gives a uniformization bundle.

In order to see how exactly $\widehat \nabla$ and $f_k^*\wt \nabla$ correspond to each other on $\Sigma_k$, we consider the holomorphic line bundle ${\mathcal L}\, \longrightarrow\, \Sigma_k$ equipped with the
logarithmic connection $\nabla^{\mathcal L}$ as in the proof of
point (2) in Proposition \ref{red}. It is straightforward to check that $\widehat{\nabla}$ and
$({\mathcal L},\, \nabla^{\mathcal L})$ satisfy all the properties stated in the proposition.
The homomorphism $\psi$ in the proposition is given by $h \otimes \mathbf 1$.
\end{proof}

\begin{remark}
The reason why we have to use a 2-valued gauge transformation for even $k$ (and hence the flat
line bundle $(\mathcal L,\, \nabla^{\mathcal L})$)
is that a hyperbolic isometric rotation
by an angle $\tfrac{2\pi}{\widetilde k}$
for $\widetilde k\,\in\, 2\Z$ cannot be represented by an $\SL(2,\R)$-matrix of order $\widetilde k$ but only by a
$\SL(2,\R)$-matrix of order $2\widetilde k$. See also
\cite[Section 4]{BoHS}  for the related case of symmetric minimal surfaces in 
${\mathbb S}^3.$
\end{remark}

\begin{proposition}\label{component-pull-back}
Let $k\,\in\, \N^{\geq3}$ and $\wt\rho\,=\, \frac{k-1}{2k}.$
Fix base points $p_0\, \in\, S_4$ and $p\,\in\, f^{-1}_k(p_0)\, \subset\, \Sigma_k\setminus\{p_1,\,p_2,\,p_3,\,p_4\}$.
Consider two logarithmic connections $D_1$ and $D_2$ on ${\mathcal O}^{\oplus 2}_{{\mathcal S}_4}$
such that
the two monodromy homomorphisms lie in the same connected component of ${\rm Hom}(\pi_1(S_4,\, p_0),
\, {\rm SL}(2,{\mathbb R}))$,  with the same prescribed local conjugacy classes determined by the parabolic weight $\wt\rho$.
Then the following hold:
\begin{enumerate}
\item If $k$ is odd, the pull-back through $f_k$ in (\ref{fk}) of the monodromy representations of $D_1$ and $D_2$ lie in the same connected
component of ${\rm Hom}(\pi_1(\Sigma_k,\,p),\, \SL(2,\R))$.

\item If $k$ is even, 
then the monodromy representations of $(f_k^*D_j)\otimes
\nabla^{\mathcal L}\,,$ $j\,=\,1,\,2$, lie in the same connected
component of  ${\rm Hom}(\pi_1(\Sigma_k,\,p),\, \SL(2,\R))$, where
 $\nabla^{\mathcal L}$ is the logarithmic connection defined in 
Proposition \ref{FRS}.
\end{enumerate}
\end{proposition}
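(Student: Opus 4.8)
The plan is to lift a path of representations on $S_4$ to a path of representations on $\Sigma_k$ by exploiting the covering structure of $f_k$. First I would record the group theory. Since $f_k$ in \eqref{fk} is totally branched over the four marked points (see \eqref{p4}), its restriction
$$f_k\colon\,\Sigma'_k\,\longrightarrow\, S_4$$
to the complement $\Sigma'_k$ of $\{p_1,p_2,p_3,p_4\}$ is an unramified covering of degree $k$. It therefore induces an injective homomorphism $\iota:=(f_k)_*\colon \pi_1(\Sigma'_k,\,p)\hookrightarrow\pi_1(S_4,\,p_0)$ onto a subgroup of index $k$, and pulling back a representation $\rho$ of $\pi_1(S_4,\,p_0)$ is simply the restriction $\rho\circ\iota$. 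By the fact that the monodromy of a pulled-back flat connection is the pull-back of the monodromy (already used in the proof of Lemma \ref{lem:realSU4}), the monodromy of $f^*_kD_j$ on $\Sigma'_k$ equals $\rho^{(j)}\circ\iota$, where $\rho^{(j)}$ denotes the monodromy of $D_j$.

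By hypothesis $\rho^{(1)}$ and $\rho^{(2)}$ lie in the same connected component of the space of representations of $\pi_1(S_4,\,p_0)$ that send each peripheral loop $\gamma_l$ into the fixed elliptic conjugacy class prescribed by $\wt\rho$ (eigenvalues $e^{\mp 2\pi\sqrt{-1}\wt\rho}$, compare \eqref{r1}). This space is a real semialgebraic set, so its connected components are path connected, and I would choose a continuous path $(\rho_s)_{s\in[0,1]}$ inside it with $\rho_0=\rho^{(1)}$ and $\rho_1=\rho^{(2)}$; in particular every $\rho_s(\gamma_l)$ is elliptic of rotation angle $2\pi\wt\rho=\pi\tfrac{k-1}{k}$. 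The decisive computation is the local monodromy of $f^*_k\rho_s:=\rho_s\circ\iota$ at the branch points: because $f_k$ is totally branched, a small loop $\delta_l$ around $p_l$ in $\Sigma'_k$ is carried by $\iota$ to a conjugate of $\gamma_l^{\,k}$, so
$$(f^*_k\rho_s)(\delta_l)\,=\,\rho_s(\gamma_l)^{k},$$
a rotation by $k\cdot 2\pi\wt\rho=\pi(k-1)$. Being a power of a semisimple element it is again semisimple, with both eigenvalues equal to $(-1)^{k-1}$; hence it equals $I$ for $k$ odd and $-I$ for $k$ even, and this holds for all $s$ at once.

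Now $\pi_1(\Sigma_k,\,p)$ is the quotient of $\pi_1(\Sigma'_k,\,p)$ by the normal subgroup generated by the $\delta_l$, and a representation of $\pi_1(\Sigma'_k,\,p)$ descends to $\pi_1(\Sigma_k,\,p)$ exactly when it is trivial on every $\delta_l$. For $k$ odd the computation shows that $f^*_k\rho_s$ is trivial on each $\delta_l$ for all $s$, hence descends to $\widehat\rho_s\in{\rm Hom}(\pi_1(\Sigma_k,\,p),\,\SL(2,\R))$; since $s\mapsto f^*_k\rho_s$ is continuous into ${\rm Hom}(\pi_1(\Sigma'_k,\,p),\,\SL(2,\R))$ and lands in the closed subset of representations trivial on the $\delta_l$ — which the quotient $\pi_1(\Sigma'_k,\,p)\to\pi_1(\Sigma_k,\,p)$ identifies with ${\rm Hom}(\pi_1(\Sigma_k,\,p),\,\SL(2,\R))$ — the family $\widehat\rho_s$ is a continuous path joining the monodromies of $f^*_kD_1$ and $f^*_kD_2$, which proves (1). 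For $k$ even the local monodromy is the central element $-I$, so I would instead twist by the fixed flat line bundle $(\mathcal L,\,\nabla^{\mathcal L})$ of Proposition \ref{FRS}, whose monodromy is a character $\sigma\colon\pi_1(\Sigma'_k,\,p)\to\{\pm1\}$ with $\sigma(\delta_l)=-1$ (its residue at each $p_l$ being $\tfrac12$). Then $(f^*_k\rho_s)\otimes\sigma$ still takes values in $\SL(2,\R)$ (scaling by $\pm1$ leaves the determinant unchanged), and it sends $\delta_l$ to $(-1)(-I)=I$, so it descends to a continuous family $\widehat\rho_s$ in ${\rm Hom}(\pi_1(\Sigma_k,\,p),\,\SL(2,\R))$ with endpoints the monodromies of $(f^*_kD_j)\otimes\nabla^{\mathcal L}$, proving (2).

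The point that needs care — and is the real content behind the bookkeeping — is that the descent must hold uniformly along the whole path, not only at its two endpoints. This is exactly what the fixed-conjugacy-class hypothesis provides: since each $\rho_s(\gamma_l)$ stays in the elliptic class of angle $\pi\tfrac{k-1}{k}$, its $k$-th power is the \emph{central} element $\pm I$ for every $s$, so $f^*_k\rho_s$ (respectively its $\sigma$-twist) lies in the subvariety cut out by triviality on all $\delta_l$ simultaneously, which is what turns $s\mapsto\widehat\rho_s$ into a genuine path in ${\rm Hom}(\pi_1(\Sigma_k,\,p),\,\SL(2,\R))$. A secondary, easier check is that the twisting character $\sigma$ in the even case is independent of $s$, so that tensoring preserves continuity and does not move the endpoints.
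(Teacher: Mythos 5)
Your proof is correct and follows essentially the same route as the paper's: both lift a continuous path of $\SL(2,\R)$-representations with fixed peripheral conjugacy classes through the unramified covering $f_k\colon\Sigma'_k\to S_4$ and observe that the pulled-back local monodromies are trivial (after twisting by $\nabla^{\mathcal L}$ when $k$ is even) uniformly along the path, so every representation in the family descends to $\pi_1(\Sigma_k,\,p)$. Your write-up is somewhat more explicit than the paper's --- you compute $\rho_s(\gamma_l)^k=(-1)^{k-1}I$ directly and carry out the even case, which the paper dismisses as working ``analogously'' --- but the underlying argument is the same.
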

\begin{proof}
We prove the statement only for odd k; the even case works analogously.
The (unbranched) covering $f_k\,\colon\, \Sigma'_k\,\longrightarrow\, S_4$
induces a covering-monodromy
$$\pi_1(S_4, \,p_0)\,\longrightarrow\,\mathcal S(k)$$ into the symmetric group 
$\mathcal S(k)\,\cong \,\mathcal S(f_k^{-1}(p_0)).$
Therefore, its first fundamental group $\pi_1(\Sigma_k\setminus\{p_1,\cdots ,p_4\},\,p)$ can be identified 
with the subgroup of $\pi_1(S_4,\, p_0)$ which is given by the kernel of the covering-monodromy. 
Moreover, the inclusion map $$\Sigma_k\setminus\{p_1,p_2,p_3,p_4\}\,\hookrightarrow\,\Sigma_k$$
induces a surjective homomorphism of fundamental groups  $$\pi_1(\Sigma_k\setminus\{p_1,p_2,p_3,p_4\},\,p)\,\longrightarrow\,
\pi_1(\Sigma_k,\,p).$$

The monodromy morphism commutes with the pull-back by $f_k$. Moreover, 
since $D_1$ and $D_2$ have real monodromy 
representation and parabolic weights $\tfrac{k-1}{2k}$,  the 
monodromy representation of the flat connection $f_k^*D_j$ on $\Sigma_k\setminus\{p_1,p_2,p_3,p_4\}$ factors through a representation 
of $\pi_1(\Sigma_k,\,p)$, for $j\,=\, 1,\,2$ (as the local monodromy at the marked points is trivial for both
the connections). By hypothesis, the monodromy homomorphisms for $D_1$ and $D_2$ are in the same connected component of
$\SL(2,\R)$-representations, and hence their monodromy representations
can be joined by a continuous path inside the space of  $\SL(2,\R)$-representations of  $\pi_1(S_4,\, p_0)$
with fixed local monodromies. The pull-back
of this path to the subspace of $\SL(2,\R)$-representations  
of  $\pi_1(\Sigma_k\setminus\{p_1,\cdots ,p_4\},\,p)$, lying in the kernel of the covering-monodromy,
is continuous as well. Moreover, by the same arguments as above, all these representations (determined by the path) factor through representations of
$\pi_1(\Sigma_k,\,p)$. Recall that $\pi_1(\Sigma_k\setminus\{p_1,p_2,p_3,p_4\},\,p)\,\longrightarrow\,
\pi_1(\Sigma_k,\,p)$ is surjective.
Therefore,  $f_k^* D_1$ and $f_k^* D_2$ are in the same connected  component of $\SL(2,\R)$-representations.
\end{proof}

\section{Logarithmic connections on the square torus with one marked point}

We consider the square torus
\begin{equation}\label{et}
T^2\,:= \,\C / \Gamma
\end{equation}
with lattice
$$\Gamma\,=\, {\mathbb Z}+\sqrt{-1}{\mathbb Z}\,\subset\,
\mathbb C$$ 
and one marked point $o\,=\,[0]\,\in\, T^2.$ The point $\frac{1+\sqrt{-1}}{4}\,\in\, T^2$ will be denoted by $p_0$.

Recall that the fundamental group $\pi_1(T^2\setminus\{o\},\, p_0)$ of the one-punctured torus $T^2 \setminus
\{o\}$ is a free group of two generators; it is generated by $\gamma_x,\,\gamma_y\,\in\,\pi_1(T^2\setminus\{o\}),$ 
where
\begin{equation}\label{gamma_x}
\gamma_x\,\colon\, [0,\, 1]\,\longrightarrow\, T^2\setminus\{o\};\ \, s\,\longmapsto\,
s+
\frac{1+\sqrt{-1}}{4}
\end{equation}
and
\[\gamma_y\,\colon\, [0,\,1]\,\longrightarrow\, T^2\setminus\{o\};\ \, s\,\longmapsto\,
\sqrt{-1}s+\frac{1+\sqrt{-1}}{4}.\]
The commutator $ \gamma_y^{-1}\gamma_x^{-1}  \gamma_y\gamma_x  \,\in\, \pi_1( T^2\setminus\{o\})$
 corresponds to a simple loop 
going around the marked point $o$. 

\subsection{The character variety of the one-punctured torus}\label{se6.1}$\;$\\
For $\rho\,\in\,]0,\, \tfrac{1}{2}[$, let $\mathcal M_{1,1}^\rho$ be the moduli space of flat
$\mathrm{SL}(2,\C)$-connections on the one-punctured torus $T^2 \setminus \{o\}$ (defined
in \eqref{et}) with local monodromy
around the puncture $o$ lying in the conjugacy class of the element 
\begin{equation}\label{locmon}
\dvector{ e^{-2\pi \sqrt{-1} \rho} &0 \\ 0& e^{2\pi\sqrt{-1} \rho}}\,\in\,
{\rm SL}(2,{\mathbb C})\, .
\end{equation}
The above de Rham moduli space $\mathcal M^\rho_{1,1}$ depends only on the topology 
of $T^2 \setminus \{o\}$; in particular, it does 
not depend on the complex structure of $T^2$. The conjugacy class of the element  in \eqref{locmon} is determined by its trace,
which is $2\cos(2\pi\rho)$; see \cite{Gold}.

For a flat $\mathrm{SL}(2,\C)$-connection $\nabla$ on $T^2 \setminus \{o\}$, let $X,\,Y$ denote its monodromies along 
$\gamma_x,\,\gamma_y\,\in \,\pi_1( T^2\setminus\{o\},\, p_0)$ (defined in \eqref{gamma_x}) respectively. Let
\begin{equation}\label{xyz}
x\,=\,\Tr(X),\quad y\,=\,\Tr(Y),\quad z\,=\,\Tr(YX).
\end{equation}
The moduli space $\mathcal M_{1,1}^\rho$ is diffeomorphic (via the monodromy mapping) to the character variety of 
the one-punctured torus for which the conjugacy class of the local monodromy at the puncture
is the one in \eqref{locmon}; this character variety is given by the equation
\begin{equation}\label{character-equation}
x^2+y^2+z^2-xyz-2-2\cos(2\pi\rho)\,=\,0\, ,
\end{equation}
where $x,\, y,\, z\, \in\, \mathbb C$.
Equivalently, for a fixed $\rho\,\in\, ]0,\, \tfrac{1}{2}[$, any triple $(x,\,y,\,z)\,\in\,\C^3$ satisfying 
\eqref{character-equation} determines, up to conjugacy, a unique representation of $\pi_1(T^2\setminus\{o\},\, p_0)$ 
into $\mathrm{SL}(2,\C)$ such that the local monodromy around the puncture is conjugate to (\ref{locmon}),
and $x$, $y$, $z$ are as in \eqref{xyz}; see \cite{Gold}. 
Note that the character variety is smooth for  $\rho\,\in\, ]0,\, \tfrac{1}{2}[$. The next lemma gives a characterization of the real points in this character variety.

\begin{lemma}\label{real}
Take $\Theta \, \in\, {\rm Hom}(\pi_1(T^2\setminus\{o\},\, p_0),\, {\rm SL}(2,{\mathbb C}))$, and denote
$X\,=\, \Theta(\gamma_x)$, $Y\,=\, \Theta(\gamma_y)$. Assume that 
$x\,=\,\Tr(X),$ $z_1\,=\,\Tr(YX)$ and $z_2\,=\,\Tr(Y^{-1}X)$ are real. Then either $y\,=\,\Tr(Y)\,\in
\,\mathbb R$ or $x\,=\,0.$ 
\end{lemma}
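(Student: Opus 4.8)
The plan is to reduce everything to a single trace identity for $\SL(2,\C)$. First I would recall that any $Y\,\in\,\SL(2,\C)$ satisfies its own characteristic equation $Y^2-\Tr(Y)\,Y+I\,=\,0$, so by Cayley--Hamilton
\begin{equation*}
Y+Y^{-1}\,=\,\Tr(Y)\,I\,=\,y\,I.
\end{equation*}
Multiplying this on the left by $X$ and taking traces then yields the linear relation
\begin{equation*}
\Tr(XY)+\Tr(XY^{-1})\,=\,y\,\Tr(X)\,=\,xy.
\end{equation*}

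Next I would use the cyclic invariance of the trace to match the two terms on the left-hand side with the given data, namely $\Tr(XY)\,=\,\Tr(YX)\,=\,z_1$ and $\Tr(XY^{-1})\,=\,\Tr(Y^{-1}X)\,=\,z_2$. This gives the key identity $xy\,=\,z_1+z_2$. The conclusion is then immediate: since $z_1$ and $z_2$ are assumed real, the product $xy$ is real, and $x$ is real by hypothesis; hence if $x\,\neq\,0$ we may divide to obtain $y\,=\,(z_1+z_2)/x\,\in\,\R$, whereas if $x\,=\,0$ the second alternative of the dichotomy holds. In either case the lemma follows.

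I expect no serious obstacle here: the whole content is the observation that the symmetric combination $z_1+z_2\,=\,\Tr(YX)+\Tr(Y^{-1}X)$ is \emph{linear} in $y$ with coefficient $x$, via the standard $\SL(2)$ trace identity. The only points requiring (minor) care are to use the relation $Y+Y^{-1}\,=\,y\,I$ rather than attempting any direct entrywise computation, and to invoke the cyclicity of the trace carefully so that $\Tr(XY)$ and $\Tr(XY^{-1})$ are correctly identified with $z_1$ and $z_2$ respectively.
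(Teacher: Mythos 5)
Your proof is correct and rests on exactly the same key identity as the paper's, namely $z_2\,=\,xy-z_1$ (equivalently $\Tr(YX)+\Tr(Y^{-1}X)\,=\,\Tr(X)\Tr(Y)$), from which the dichotomy follows by dividing by $x$ when $x\,\neq\,0$. The only difference is cosmetic: the paper obtains the identity by computing with the normalized matrices \eqref{XYnormailisation}, whereas you derive it intrinsically from Cayley--Hamilton ($Y+Y^{-1}\,=\,y\,I$) and cyclicity of the trace, which is marginally cleaner since it needs no conjugation into a normal form.
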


\begin{proof}
A short computation (see also \cite{Gold}) shows that up to conjugation we can choose
\begin{equation}\label{XYnormailisation}
X\,=\,\begin{pmatrix} x&1\\-1&0\end{pmatrix},\quad Y
\,=\,\begin{pmatrix} 0&-\zeta\\\zeta^{-1}&y\end{pmatrix}
\end{equation}
with
\[z_1\,=\,\zeta^{-1}+\zeta.\]
For given $x,\,y$ the traces $z_1$ and $z_2$ are solutions of the quadratic equation in \eqref{character-equation}.
Using \eqref{XYnormailisation} we compute that
\[ z_2\,=\,x y-z_1.\]
If $x,\,z_1,\,z_2 \,\in\,\R$, then either $y\,\in\, \R$ or $x\,=\,0.$ 
\end{proof}

The following theorem proved in \cite[Section 2.6 $\&$ Section 3.3]{Gold} describes the connected components of 
the real points in the character variety.

\begin{theorem}[Goldman \cite{Gold}]\label{goldman}
For $\rho\,\in\,]0,\,\tfrac{1}{2}[$, the set of real points of the character variety defined by \eqref{character-equation} has
5 connected components. There is one compact component which is characterized by $x,\,y,\,z\,\in\,[-2,\,2]$, and
there are 4 non-compact 
components which are all diffeomorphic to each other. The compact component consists of $\mathrm{SU}(2)$-representations and the non-compact 
components consist of $\mathrm{SL}(2,\R)$-representations.
\end{theorem}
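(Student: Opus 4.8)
The plan is to regard the character variety \eqref{character-equation} as the real affine cubic surface
$$S\,=\,\{(x,y,z)\in\R^3\,:\,\kappa(x,y,z)=c\},\qquad \kappa:=x^2+y^2+z^2-xyz,\quad c:=2+2\cos(2\pi\rho),$$
and to read off its components by an elementary Morse-type analysis. Since $\rho\in\,]0,\tfrac12[$ we have $c\in\,]0,4[$. First I would locate the critical points of $\kappa$ on $\R^3$: solving $\nabla\kappa=0$ yields the minimum $(0,0,0)$ with $\kappa=0$ and the four saddle points $(2,2,2),(2,-2,-2),(-2,2,-2),(-2,-2,2)$ with $\kappa=4$. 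As $c\in\,]0,4[$ is a regular value, $S$ is a smooth surface. I would also record the symmetry group of $S$ generated by permutations of $(x,y,z)$ and by the even sign changes $(x,y,z)\mapsto(-x,-y,z)$, etc.; the latter form a group $(\Z/2)^2$ and, in representation terms, are exactly the twists by the four homomorphisms $\pi_1(T^2\setminus\{o\})\to\{\pm1\}$, hence act on $S$ by diffeomorphisms.

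The key observation is that $S$ is disjoint from the six planes $x=\pm2$, $y=\pm2$, $z=\pm2$: setting $x=2$ in $\kappa=c$ gives $(y-z)^2=c-4<0$, and the other five cases are identical. Consequently, on each connected component of $S$ each of the quantities $\mathrm{sgn}(x^2-4),\mathrm{sgn}(y^2-4),\mathrm{sgn}(z^2-4)$ is constant, and the sign of any coordinate with absolute value $>2$ is constant as well. I would then show that the number of such ``large'' coordinates is never $1$ or $2$. For a single large coordinate, say $|x|>2$ and $|y|,|z|<2$, I view $\kappa=c$ as the upward quadratic $x^2-(yz)x+(y^2+z^2-c)=0$; its values at $\pm2$ are $(y\mp z)^2+4-c>0$ while the sum of its roots is $yz$ with $|yz|<4$, which forces any real root into $]-2,2[$, a contradiction. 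For two large coordinates, say $|x|,|y|>2$ and $|z|<2$, the analogous quadratic in $z$ again takes positive values at $\pm2$ while its axis $z=xy/2$ satisfies $|xy|/2>2$, so no root lies in $]-2,2[$, again a contradiction. Hence each component has either $0$ or $3$ large coordinates; in the latter case $x^2+y^2+z^2>12>c$ forces $xyz=x^2+y^2+z^2-c>0$, so the sign pattern has an even number of minus signs, i.e. it is one of $(+,+,+),(+,-,-),(-,+,-),(-,-,+)$.

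This already pins down the components. The locus with no large coordinate lies in the open cube $]-2,2[^3$; being a closed subset of the cube disjoint from its boundary, it is compact, and I would identify it with a single $2$-sphere by projecting to the $(x,y)$-plane and checking that the discriminant locus $x^2y^2-4x^2-4y^2+4c=0$ bounds a disk over which $z$ is a two-sheeted graph (both roots lying in $]-2,2[$, since there the quadratic in $z$ has its axis in $]-2,2[$ and is positive at $\pm2$). Each of the four even sign regions contains a point of $S$ (for instance the diagonal $x=y=z=t$ with $t>2$ a root of $t^3-3t^2+c=0$ for $(+,+,+)$), is non-compact, and the four are carried to one another by the $(\Z/2)^2$ of even sign changes, hence are mutually diffeomorphic; connectivity of each follows from the same projection argument. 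This gives exactly one compact and four non-compact, mutually diffeomorphic, components.

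Finally I would determine the real form. Since $c\neq4$, no point of $S$ is reducible: a representation of a free group is reducible iff $\Tr[X,Y]=2$, i.e. iff $\kappa=4$. As every irreducible representation with real character is conjugate into one of the two real forms $\mathrm{SU}(2)$, $\SL(2,\R)$ (compare the normal form of Lemma~\ref{real}), and these alternatives are distinguished on irreducibles, the real-form type is a locally constant function on the reducible-free surface $S$, hence constant on each component. The four non-compact components contain elements with $|x|>2$, impossible in the compact group $\mathrm{SU}(2)$, so they consist of $\SL(2,\R)$-representations; the compact component is contained in the classical image of the $\mathrm{SU}(2)$-character variety $\{|x|,|y|,|z|\le2,\ \kappa\le4\}$, so it consists of $\mathrm{SU}(2)$-representations. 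The main obstacle in this program is the connectivity/topology step, namely proving that each admissible region meets $S$ in a single connected component of the asserted type; the separation by the six planes reduces this to the explicit plane-projection (or Morse) computation, which is where the actual work lies.
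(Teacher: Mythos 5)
The paper does not prove this statement at all: it is quoted verbatim from Goldman and justified by the citation to \cite[Sections 2.6 and 3.3]{Gold}, so there is no internal proof to compare against. Your argument is a correct, self-contained verification, and it is in the same spirit as Goldman's own analysis of the level sets of $\kappa=x^2+y^2+z^2-xyz$: the computation of the critical values $0$ and $4$, the observation that the level surface for $c=2+2\cos(2\pi\rho)\in\,]0,4[$ misses the six planes $x,y,z=\pm 2$, and the quadratic-in-one-variable exclusion of exactly one or two ``large'' coordinates are all sound, as is the identification of the admissible sign patterns via $xyz=x^2+y^2+z^2-c>0$. Two steps are stated more tersely than they deserve. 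First, the connectivity of each region: for the compact piece your discriminant region is $\{(4-x^2)(4-y^2)\ge 16-4c\}$, which is convex because $\log(4-x^2)+\log(4-y^2)$ is concave on the open square, so the two-sheeted graph description does give a sphere; an analogous (non-compact, connected) hyperbola-type region handles each sign octant, and this should be said explicitly since it is where the count of \emph{exactly} five components is decided. Second, the local constancy of the real form on $S$: the clean justification is that both the set of $\mathrm{SU}(2)$-characters and the set of $\mathrm{SL}(2,\R)$-characters are closed in the irreducible locus (the former because $\mathrm{Hom}(F_2,\mathrm{SU}(2))$ is compact, the latter because an irreducible representation depends continuously on its character, e.g.\ via the normal form \eqref{XYnormailisation}), they are disjoint since $\mathrm{SU}(2)\cap\mathrm{SL}(2,\R)$ is abelian, and their union is all of $S$ because $c\neq 4$ excludes reducibles; hence each is open and closed. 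With those two points spelled out, your proof buys a completely elementary replacement for the citation, at the cost of invoking the classical description of the $\mathrm{SU}(2)$-character image $\{x,y,z\in[-2,2],\ \kappa\le 4\}$ (or, alternatively, the closedness argument just given makes even that unnecessary).
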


\begin{remark}\label{4realcomponents}
The four non-compact components of the character variety are interchanged by
the group of sign-change automorphisms \cite[Section 2.2.1 $\&$ Section 2.6]{Gold}. This means, that the
coordinates $(x,\,y,\,z)$ are mapped to $((-1)^{\epsilon_1}x,\,(-1)^{\epsilon_2}y,\,(-1)^{\epsilon_3}z)$
where $\epsilon_l\,\in\,\{0,1\}$ for $l\,=\,1,\,2,\,3,$ such that $\epsilon_1+\epsilon_2+\epsilon_3\in\{0,2\}.$
In terms of the Hitchin-Kobayashi correspondence, these four components
correspond to the four distinct spin structures on a torus.
\end{remark}

\subsection{The de Rham moduli space of the one-punctured torus}$\;$\\
Let $w$ be the global coordinate on the universal covering $\C$ of $T^2$ in \eqref{et}.
Since $T^2$ is a square torus, there exists an anti-holomorphic involution
\begin{equation}\label{et2}
\eta\,\colon\, T^2\,\longrightarrow\, T^2, \quad [w]\,\longmapsto\, [-\sqrt{-1}\overline{w}]
\end{equation}
on $T^2$ corresponding to the reflection along a diagonal of the square. 
Note that the marked point $o\in T^2$ is fixed by the map $\eta.$
The induced real involution of the de Rham moduli space
\[\mathcal M_{1,1}^\rho\,\longrightarrow\, \mathcal M_{1,1}^\rho ,\quad [\nabla]\,\longmapsto\,
[\eta^*\overline\nabla]\]
is well-defined as $\rho$ is real.

For notational convenience we denote by $L$ the trivial $C^\infty$ bundle $T^2\times {\mathbb C}\, \longrightarrow\, T^2$. Let $a,\,\chi \in \C$ be coordinates of $\mathcal M_{1,1}^\rho$ obtained from abelianization
(see \cite[(2.3)]{BDH}, or \cite{HeHe}). For this purpose recall from \cite[(2.3)]{BDH} that  any element in $\mathcal M_{1,1}^\rho$ (with  $\rho\,\in\,]0,\,\tfrac{1}{2}[$)  is represented by 
a logarithmic  flat connection on $L\oplus L^*$ with a unique pole at $o$  
\begin{equation}\label{abel-connection}
\nabla\,=\,\nabla^{a,\chi,\rho}\,=\,\dvector{\nabla^L &\gamma^-_\chi\\ \gamma^+_\chi &
\nabla^{L^*} }\,,
\end{equation}
where $\nabla^L$ is the flat connection on $L$ defined by
\begin{equation}\label{nablaL}
\nabla^L\,=\,d+adw+\chi d\overline{w};
\end{equation}
$w$ being the above global  holomorphic  coordinate of $T^2$ and $a,\,\chi \in \C$. Moreover $ \nabla^{L^*}$ is its dual connection on $L^*$, while 
$\gamma^+_\chi$ and $\gamma^-_\chi$ are meromorphic sections with respect to the holomorphic structure
given by the Dolbeault operators $\overline{\partial}^0 - 2 \chi d\overline{w}$ and  $\overline{\partial}^0 + 2 \chi d\overline{w}$ respectively,  with simple poles 
at $o\,\in\, T^2$ and residues  determined by $\rho$. Here $\overline{\partial}^0\,=\,d''$ is the $(0,1)$-part of
the de Rham differential operator  $d$; in particular, there is a  holomorphic structure induced by $\nabla$ in  (\ref{abel-connection}) on $L$, the one given  given by the Dolbeault operator $\overline{\partial}^0 +  \chi d\overline{w}$.

\begin{remark}
Note that the parabolic weight  at $o$  of the logarithmic connection   $\nabla^{a,\chi,\rho}$ 
is  $\rho.$ The parabolic line is determined,
up to a holomorphic automorphism of $L\oplus L^*$, by the
condition that it is neither the line $L_o$, nor the line $L^*_o.$
\end{remark}

\begin{lemma}\label{tausymcon}
Let $\Gamma^*\,:=\,\pi\Z+\sqrt{-1}\pi\Z.$ The gauge class of the connection $\nabla\,=\,\nabla^{a,\chi,\rho}$ as in (\ref{abel-connection})  on the one-punctured torus is fixed by the involution $\eta$ defined in (\ref{et2}) if
\[\chi\,\in\, (1-\sqrt{-1})\R\setminus\tfrac{1}{2}\Gamma^*\;\quad { and }\quad a\,\in\, (1+\sqrt{-1})\R ,\]
or
\[\chi\,\in\, (1+\sqrt{-1})\R\setminus\tfrac{1}{2}\Gamma^*\;\quad { and }\quad a\,\in\, (1-\sqrt{-1})\R.\]
\end{lemma}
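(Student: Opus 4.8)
The plan is to track how the real structure $[\nabla]\,\mapsto\,[\eta^*\overline\nabla]$ acts on the abelianization coordinates $(a,\,\chi)$ and then to exhibit, in each of the two listed cases, a gauge transformation realizing $\eta^*\overline\nabla\,\cong\,\nabla$. Since $\rho$ is real and $\eta$ fixes the marked point $o$, the operation $\eta^*\overline{(\cdot)}$ preserves both meromorphicity at $o$ and the residue data, hence maps $\mathcal M^\rho_{1,1}$ to itself; so it suffices to identify its effect on the pair $(a,\,\chi)$ and then verify that the off-diagonal data in \eqref{abel-connection} are compatible. First I would reduce to the diagonal line-bundle connection $\nabla^L\,=\,d+a\,dw+\chi\,d\overline w$ from \eqref{nablaL}, whose gauge class (equivalently, whose two holonomies) is determined by $(a,\,\chi)$.

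The core is a short computation on $1$-forms. From \eqref{et2} one reads off $\eta^* w\,=\,-\sqrt{-1}\,\overline w$, whence $\eta^*(dw)\,=\,-\sqrt{-1}\,d\overline w$ and $\eta^*(d\overline w)\,=\,\sqrt{-1}\,dw$. Conjugating the connection form of $\nabla^L$ and pulling back gives
\[
\eta^*\overline{\nabla^L}\,=\,d+\sqrt{-1}\,\overline a\,dw-\sqrt{-1}\,\overline\chi\,d\overline w,
\]
so that the induced map on coordinates is $(a,\,\chi)\,\longmapsto\,(\sqrt{-1}\,\overline a,\,-\sqrt{-1}\,\overline\chi)$. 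A direct substitution settles the two cases. If $a\,\in\,(1+\sqrt{-1})\R$ and $\chi\,\in\,(1-\sqrt{-1})\R$, then $\sqrt{-1}\,\overline a\,=\,a$ and $-\sqrt{-1}\,\overline\chi\,=\,\chi$, so $\eta^*\overline{\nabla^L}\,=\,\nabla^L$ and the diagonal connection is fixed on the nose. If instead $a\,\in\,(1-\sqrt{-1})\R$ and $\chi\,\in\,(1+\sqrt{-1})\R$, the same substitution yields $\sqrt{-1}\,\overline a\,=\,-a$ and $-\sqrt{-1}\,\overline\chi\,=\,-\chi$; that is, $\eta^*\overline{\nabla^L}\,=\,\nabla^{L^*}$, so the line-bundle connection is sent to its dual.

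It remains to upgrade these diagonal statements to the full connection \eqref{abel-connection} by accounting for the off-diagonal sections $\gamma^\pm_\chi$. In the first case the diagonal is preserved literally, and I would check that $\eta^*\overline{(\cdot)}$ sends $\gamma^\pm_\chi$ to meromorphic sections with the same pole order and residue at $o$; since these sections are uniquely determined up to the residual diagonal gauge $\mathrm{diag}(\lambda,\,\lambda^{-1})$, they must agree after such a gauge, giving $\eta^*\overline\nabla\,\cong\,\nabla$. In the second case the dualization is compensated by the off-diagonal swap $\mathrm{diag}(\nabla^L,\,\nabla^{L^*})\,\mapsto\,\mathrm{diag}(\nabla^{L^*},\,\nabla^L)$ effected by a constant $\mathrm{SL}(2,\C)$ permutation gauge, which interchanges $\gamma^+_\chi$ and $\gamma^-_\chi$; one then matches the result with $\nabla^{a,\chi,\rho}$ up to a further diagonal gauge. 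The hypothesis $\chi\,\notin\,\tfrac12\Gamma^*$ is exactly what keeps us in the locus where $(a,\,\chi)$ are genuine coordinates — equivalently, where the holomorphic line bundle $L^{\otimes 2}$ is nontrivial and $\gamma^\pm_\chi$ are determined up to scale — so that the uniqueness arguments above apply. I expect the bookkeeping of these off-diagonal sections (their precise pole normalization and the residual diagonal gauge needed to match them) to be the main technical obstacle; the diagonal computation itself is elementary.
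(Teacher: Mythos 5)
Your proposal is correct and follows essentially the same route as the paper: the same pullback computation $\eta^*dw=-\sqrt{-1}\,d\overline w$, $\eta^*d\overline w=\sqrt{-1}\,dw$ showing $\eta^*\overline{\nabla^L}=\nabla^L$ (resp.\ $(\nabla^L)^*$) in the two cases, followed by the uniqueness-up-to-scale of $\gamma^\pm_\chi$ (normalized by the quadratic residue of $\gamma^+_\chi\gamma^-_\chi(dw)^2$ being $\rho^2$, which is \cite[Proposition 2.5]{BDH} in the paper) to absorb the off-diagonal terms by a diagonal, respectively permutation-plus-diagonal, gauge. The only difference is that you spell out the second case, which the paper dismisses as analogous.
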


\begin{proof}
We have
\[\eta^* dw\,=\,-\sqrt{-1} d\overline{w}\quad \text{ and }\quad \eta^* d\overline{w}\,=\,\sqrt{-1} d w.\]
Hence, for $\chi\,\in\, (1-\sqrt{-1})\R$ and $a\,\in\, (1+\sqrt{-1})\R$,
\[\eta^*\overline{\nabla^L}=\nabla^L\]
with
$\nabla^L$ given by \eqref{nablaL}. By \cite[Proposition 2.5]{BDH} the meromorphic  sections 
$\gamma^\pm_\chi$ in (\ref{abel-connection}), described above, are unique, up to scaling,  under  the  given condition that
the quadratic residue at $o\,\in\, T^2$ of the meromorphic quadratic differential 
\[\gamma^+_\chi\gamma^-_\chi (dw)^2\]
 is $\rho^2$. Thus, we obtain constants $c^+,c^-\,\in\,\C^*$, with $c^+ c^-\,=\,1$,
such that
\[\eta^*\overline{\gamma^\pm_\chi dw}\,=\,c^\pm\gamma^\pm_\chi dw.\]
In particular,
$\nabla$ and $\eta^*\overline\nabla$ are gauge equivalent.
If $\chi\,\in\, -(1-\sqrt{-1})\R\setminus\tfrac{1}{2}\Gamma^*$ and  $a\,\in\, -(1+\sqrt{-1})\R $
then
$\eta^*\overline{\nabla^L}=(\nabla^L)^*,$
and the proof works analogously.
\end{proof}

\begin{lemma}\label{tausymcon2}
Let $\nabla\,=\,\nabla^{a,\chi,\rho}$ be a connection on $T^2\setminus\{o\}$ as in (\ref{abel-connection}) with $[\eta^*\overline{\nabla}]
\,=\,[\nabla].$ Then
$$z_1\,=\,\Tr(YX)\,\in\, \R \ \quad\text{ and } \quad\ z_2\,=\,\Tr(Y^{-1}X)\,\in\,\R.$$
\end{lemma}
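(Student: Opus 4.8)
The plan is to convert the hypothesis $[\eta^*\overline\nabla]=[\nabla]$ into a single relation between the monodromy representation $\Theta$ of $\nabla$ and its complex conjugate, and then to read off reality of the two traces from the $\SL(2,\C)$ trace identities alone. Writing $\Theta\,\in\,\Hom(\pi_1(T^2\setminus\{o\},\,p_0),\,\SL(2,\C))$ for the monodromy of $\nabla$, the operation $\nabla\mapsto\overline\nabla$ replaces $\Theta$ by the complex conjugate representation $\overline\Theta$, while pullback by $\eta$ precomposes with the map $\eta_*$ on $\pi_1(T^2\setminus\{o\},\,p_0)$ induced by $\eta$ (moving the base point from $\eta(p_0)$ back to $p_0$ along a fixed path contributes only an inner automorphism). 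Hence the gauge equivalence yields a matrix $C\,\in\,\SL(2,\C)$ with
\[
\Theta(\gamma)\,=\,C\,\overline{\Theta(\eta_*\gamma)}\,C^{-1}\qquad\text{for all }\gamma\in\pi_1(T^2\setminus\{o\},\,p_0),
\]
the inner-automorphism ambiguity of $\eta_*$ having been absorbed into $C$.

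Next I would compute $\eta_*$ on the generators $\gamma_x,\,\gamma_y$ of \eqref{gamma_x}. Lifting $\eta$ to $w\mapsto-\sqrt{-1}\,\overline w$ on $\C$ gives $\eta(w+1)=\eta(w)-\sqrt{-1}$ and $\eta(w+\sqrt{-1})=\eta(w)-1$, so $\eta$ conjugates the deck translation by $1$ to the one by $-\sqrt{-1}$ and the translation by $\sqrt{-1}$ to the one by $-1$ (equivalently $\eta^*dw=-\sqrt{-1}\,d\overline w$). Therefore
\[
\eta_*\gamma_x\,=\,\gamma_y^{-1},\qquad \eta_*\gamma_y\,=\,\gamma_x^{-1}
\]
up to conjugacy; this is an involution and sends the peripheral commutator class to its inverse up to conjugacy, consistent with $\eta$ being orientation reversing. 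Substituting into the previous relation gives
\[
X\,=\,C\,\overline{Y}^{-1}C^{-1},\qquad Y\,=\,C\,\overline{X}^{-1}C^{-1}.
\]

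Finally I would extract the two reality statements. Using $\overline{M}^{-1}=\overline{M^{-1}}$ and $(YX)^{-1}=X^{-1}Y^{-1}$ one computes
\[
YX\,=\,C\,\overline{(YX)^{-1}}\,C^{-1},\qquad Y^{-1}X\,=\,C\,\overline{XY^{-1}}\,C^{-1},
\]
so that, by the $\SL(2,\C)$ identity $\Tr(M)=\Tr(M^{-1})$ together with $\Tr(XY^{-1})=\Tr(Y^{-1}X)$,
\[
z_1\,=\,\Tr(YX)\,=\,\overline{\Tr(YX)},\qquad z_2\,=\,\Tr(Y^{-1}X)\,=\,\overline{\Tr(Y^{-1}X)},
\]
hence $z_1,\,z_2\,\in\,\R$. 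I expect the only delicate point to be the identification of $\eta_*$ on $\pi_1$ together with the base-point bookkeeping; but since every quantity is fed through a trace, which is invariant both under conjugation and under $M\mapsto M^{-1}$, neither the inner-automorphism ambiguity nor the choice of connecting path affects the conclusion.
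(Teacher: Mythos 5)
Your argument rests on the same two ingredients as the paper's proof --- the action of the anti-holomorphic involution $\eta$ on homotopy classes of loops, and the identity $\Tr(M)=\Tr(M^{-1})$ in $\SL(2,\C)$ --- and it reaches the correct conclusion. The paper organizes this differently: it works directly with the two product curves $\gamma^{z_1}=\gamma_y\gamma_x$ and $\gamma^{z_2}=\gamma_y^{-1}\gamma_x$ and checks, by inspecting the curves on the square torus, that $\eta(\gamma^{z_1})$ is freely homotopic to $(\gamma^{z_1})^{-1}$ and $\eta(\gamma^{z_2})$ is freely homotopic to $\gamma^{z_2}$, which is exactly the information the trace computation needs. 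Your version, which pins down $\eta_*$ on the generators and then computes algebraically, is more systematic and yields a bit more (for instance $x=\overline{y}$, which the lemma does not record), but it requires a strictly stronger input, and this is where the writeup has a gap.

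From $\eta^*dw=-\sqrt{-1}\,d\overline{w}$ you legitimately conclude that $\eta_*\gamma_x$ is \emph{conjugate} to $\gamma_y^{-1}$ and $\eta_*\gamma_y$ is \emph{conjugate} to $\gamma_x^{-1}$, but you then write $X=C\,\overline{Y}^{-1}C^{-1}$ and $Y=C\,\overline{X}^{-1}C^{-1}$ with a \emph{single} $C$. That amounts to claiming that $\eta_*$, after composing with one inner automorphism, is exactly $\gamma_x\mapsto\gamma_y^{-1}$, $\gamma_y\mapsto\gamma_x^{-1}$; knowing each generator's image only up to its own conjugacy class is genuinely weaker, and your closing remark that ``everything is fed through a trace'' does not repair this, because $\Tr(YX)$ is not a function of the conjugacy classes of $Y$ and $X$ separately --- it is precisely the conjugacy class of the \emph{product} $\eta_*(\gamma_y\gamma_x)$ that must be controlled. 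The claim is nevertheless true and can be closed in either of two ways: (i) take the connecting path from $p_0=\tfrac{1+\sqrt{-1}}{4}$ to $\eta(p_0)$ to be the straight diagonal segment and slide the image loops $\eta\circ\gamma_x$, $\eta\circ\gamma_y$ back along this diagonal; the homotopy never meets the puncture, so $\eta_*\gamma_x=\gamma_y^{-1}$ and $\eta_*\gamma_y=\gamma_x^{-1}$ hold on the nose; or (ii) invoke $\mathrm{Out}(F_2)\cong \mathrm{GL}(2,\Z)$, so that the outer class of $\eta_*$ is already determined by its action $\gamma_x\mapsto-\gamma_y$, $\gamma_y\mapsto-\gamma_x$ on homology and therefore agrees with that of the standard automorphism. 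With either justification inserted, your computation goes through; alternatively one can avoid the issue entirely by verifying the free homotopy classes of the two product curves directly, as the paper does.
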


\begin{proof} Consider the $p_0$-based loops  $\gamma^{z_1}$ and $\gamma^{z_2}$  on $T^2\setminus\{o\}$ which are the concatenations of the loops $\gamma_x$ and $\gamma_y$ (defined in \ref{gamma_x}) and 
of the loops $\gamma_x$ and ${\gamma_y}^{-1}$ respectively. Their corresponding  elements   in $\pi_1( T^2\setminus\{o\},\, p_0)$ (for which we use the same notation) satisfy $\gamma^{z_1}=\gamma_y \gamma_x$ and
$\gamma^{z_2} = {\gamma_y}^{-1} \gamma_x$.
By definition $z_1$ and $z_2$ are the traces of the monodromy of $\nabla$ along the loops $\gamma^{z_1}$ and $\gamma^{z_2}$ respectively.

Note that the  real involution $\eta$ in \eqref{et2} maps the 
closed curve $\gamma^{z_2}$ to a curve $\eta(\gamma^{z_2})$ which is free homotopic (i.e., without fixed base point) 
 to $\gamma^{z_2}$; see Figure \ref{gammaz}. Since by hypothesis $\eta^*\overline{\nabla}\,\cong\,\nabla$, we thus obtain that $z_2\,=\,\overline{z}_2.$ Similarly, the closed curve 
$\gamma^{z_1}$  is mapped by 
$\eta$ to a curve $\eta(\gamma^{z_1})$ which is free homotopic to $(\gamma^{z_1})^{-1}$; see Figure \ref{gammaz}.
As $\Tr(M)\,=\,\Tr(M^{-1})$ for every $M\,\in\,\mathrm{SL}(2,\C)$, we obtain
that $z_1\,=\,\overline{z}_1.$
\end{proof}
\begin{figure}[ht]
\centering
\includegraphics[width=0.45\textwidth]{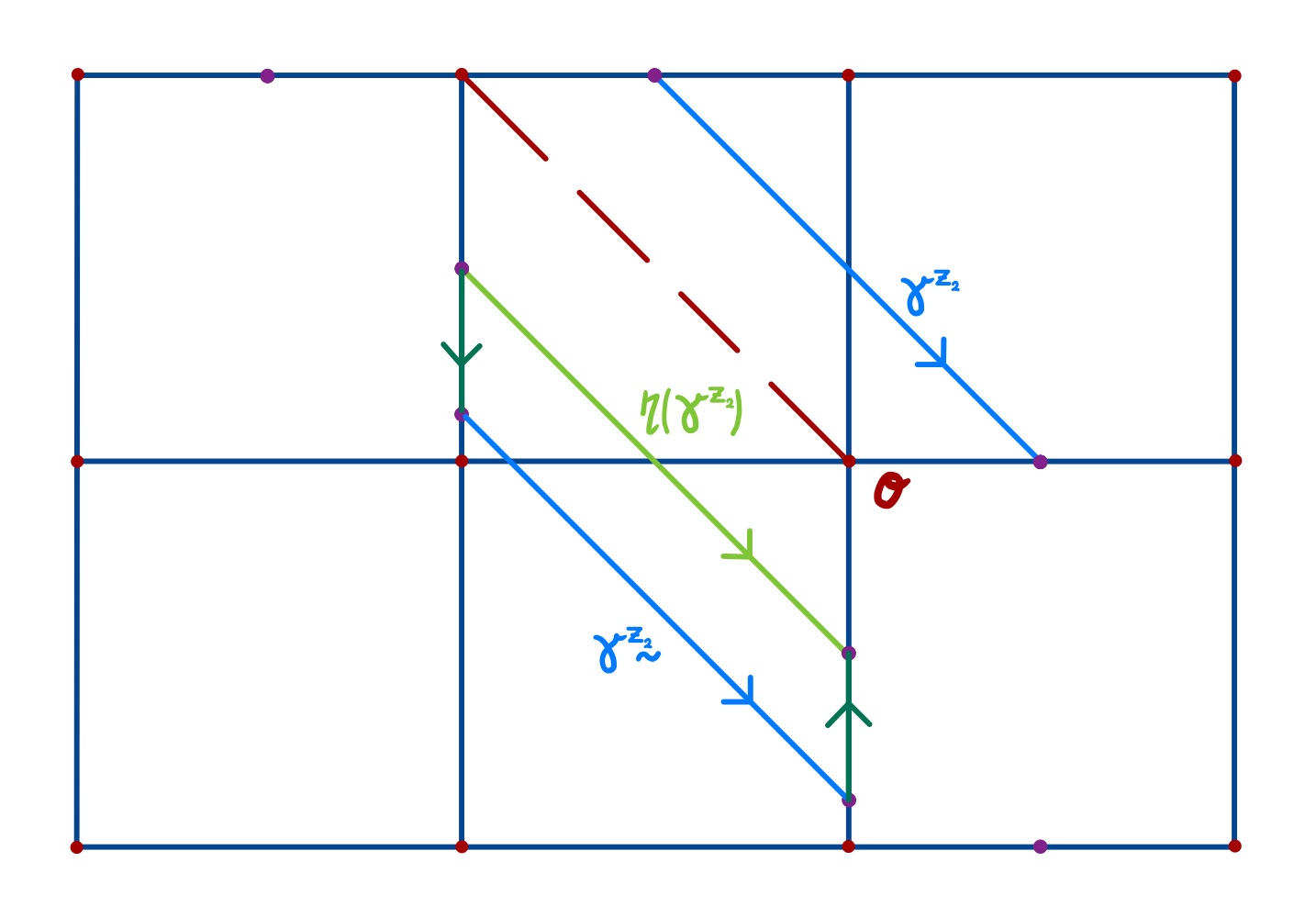}
\includegraphics[width=0.45\textwidth]{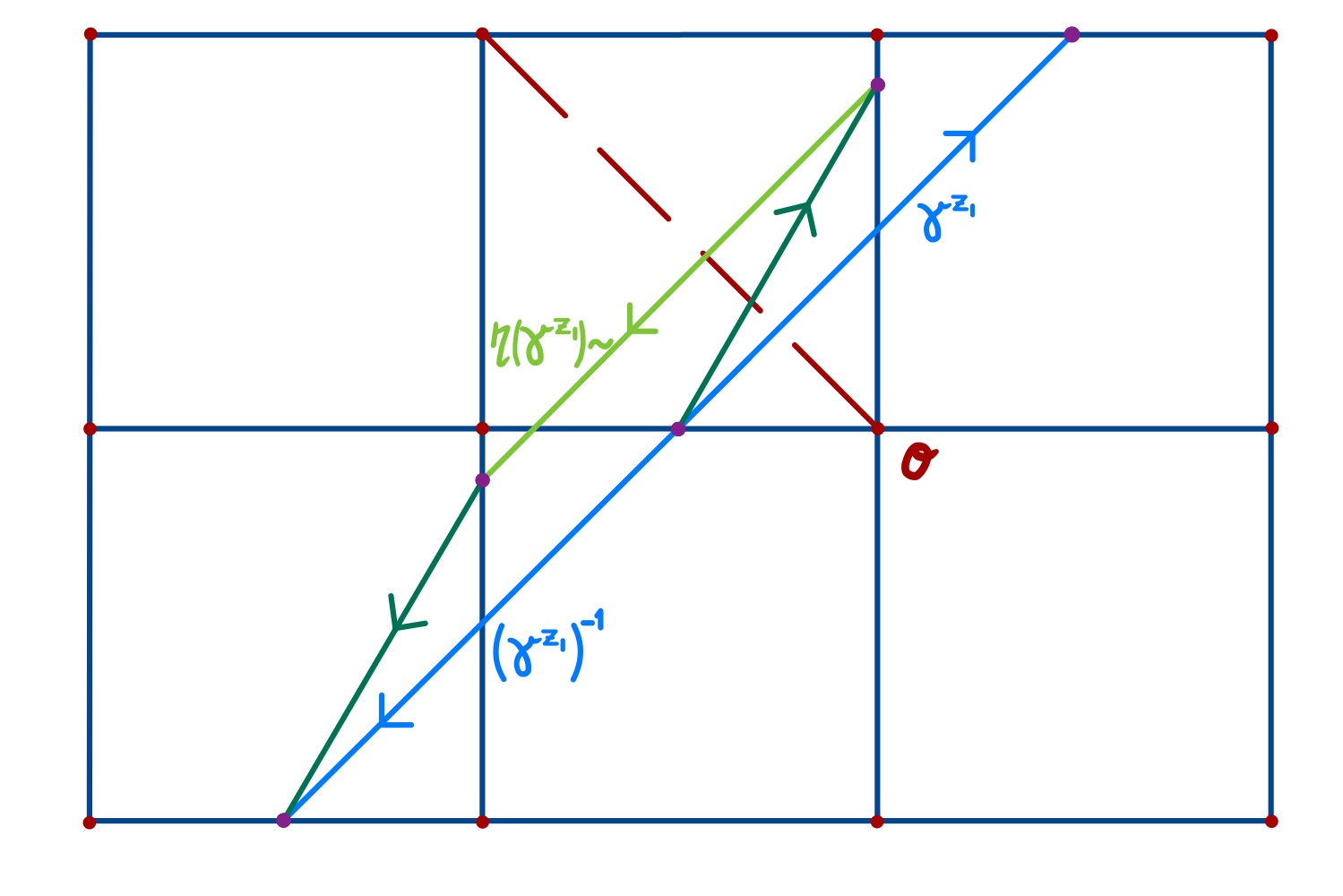}
\caption{The curves $\gamma^{z_2}$ and $\gamma^{z_1}$:  their  monodromy  traces are $z_2$ and $z_1$.}
\label{gammaz}
\end{figure}

\subsection{A consequence of WKB analysis}$\;$\\
Fix $\rho\,\in\,]0,\,\tfrac{1}{2}[$ and
\begin{equation}\label{eq:chi0}
\chi^0\,=\,\tfrac{\pi}{4}(1-\sqrt{-1}), \quad a^0\,=\,\tfrac{\pi}{4}(1+\sqrt{-1}).
\end{equation}
Consider the family of 
flat connections, parametrized by $t \,\in \,\R$ on $T^2\setminus\{o\}$ (defined in \eqref{et}) given by
\[\nabla^t\,:=\,\nabla^{ (1-t)\, a^0,\chi^0,\,\rho}\,=\,\nabla^{a^0,\chi^0,\,\rho}+ t\tfrac{\pi}{4}(1+\sqrt{-1})\begin{pmatrix} -dw&0
\\0&dw\end{pmatrix}.\]
In this section we study the behavior of \[x(t)\,:=\, \Tr(X(t)),\] where $X(t)$ is the monodromy of $\nabla^t$
along $\gamma_x\,\in\,\pi_1(T^2\setminus\{o\},\, p_0).$ By Lemma \ref{tausymcon},
the connection $\nabla^t$ is compatible with the involution $\eta$ (see \eqref{et2}),
in the sense that $[\nabla^t]\,= \,\eta^*[\overline{\nabla^t}]$ for all $t\,\in\,\R$.
In particular, the traces $z_1(t),\,z_2(t)$ defined in Lemma \ref{tausymcon2}
 are real for all  $\nabla^t$, with  $t\,\in\,\R.$

From the definition of $\gamma_x$ in \eqref{gamma_x} we have
\[\gamma_x'(s)\,=\,1 \quad \,\forall\,\ s\,\in\,[0,\,1].\]
For the vector $v\,=\,1\,\in\,\C$ we have
\[\text{Re}(-\tfrac{\pi}{4}(1+\sqrt{-1}) dw(v))\,=\,-1\,>\,0.\]
Hence, the curve $\gamma_x$
is a WKB curve (see Section \ref{subsection;21.3.7.30} in the Appendix)
for the 1-form
\[-\tfrac{\pi}{4}(1+\sqrt{-1}) dw.\] 
From Corollary \ref{cor;21.3.7.10} of the Appendix (compare also with \cite[Appendix 4]{GMN})
we get a non-zero constant $C\,\in\,\C^*$ such that
\begin{equation}\label{phaseanalysis}\lim_{t\in\R^{>0},\,t\to\infty}x(t)\exp(-t\pi\frac{1+\sqrt{-1}}{4})\,=\,C.
\end{equation}
From this the following corollary is obtained.

\begin{corollary}\label{tn}
There exist a sequence $(t_n)_{n\in \N}\,\subset\,\R$ such that $x(t_n)$ is real and non-zero for every $n
\,\in\, \N$, and ${\displaystyle \lim_{n\rightarrow \infty}}t_n\,=\,\infty$. In particular, the monodromy
representation of $\nabla^{t_n}$ is conjugate to a $\mathrm{SL}(2,\R)$-representation for all $n$.
\end{corollary}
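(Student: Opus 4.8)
The plan is to read off the sequence $(t_n)$ directly from the WKB asymptotics \eqref{phaseanalysis}, and then to upgrade the reality of $x(t_n)$ to an $\SL(2,\R)$-representation using the $\eta$-symmetry, Lemma \ref{real}, and Goldman's Theorem \ref{goldman}. The only analytically serious input, namely \eqref{phaseanalysis}, is provided by the Appendix, so the work here is to combine it with the trace identities already available.

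First I would rewrite \eqref{phaseanalysis} as $x(t)=\big(C+o(1)\big)\exp\!\big(t\tfrac{\pi}{4}(1+\sqrt{-1})\big)$ as $t\to+\infty$ and separate modulus from phase: since $\exp\!\big(t\tfrac{\pi}{4}(1+\sqrt{-1})\big)=e^{t\pi/4}\,e^{\sqrt{-1}\,t\pi/4}$ and the real positive factor $e^{t\pi/4}$ does not affect reality, it suffices to find real values of $g(t):=x(t)\,e^{-t\pi/4}=\big(C+o(1)\big)e^{\sqrt{-1}\,t\pi/4}$. Writing $C=|C|\,e^{\sqrt{-1}\theta_0}$ with $|C|>0$, the continuous function ${\rm Im}\,g(t)$ is close to $|C|\sin(t\tfrac{\pi}{4}+\theta_0)$, which crosses zero transversally with consecutive zeros spaced at distance $4$ in $t$. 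For $t$ large enough that the $o(1)$ error is smaller than the amplitude of $|C|\sin(t\tfrac{\pi}{4}+\theta_0)$ at the midpoints between its zeros, the intermediate value theorem forces ${\rm Im}\,g$ --- hence ${\rm Im}\,x$ --- to vanish in each such interval. This yields infinitely many $t_n\to\infty$ with $x(t_n)\in\R$; moreover $|x(t_n)|=|g(t_n)|\,e^{t_n\pi/4}\to\infty$ since $|g(t_n)|\to|C|>0$, so $x(t_n)\neq0$ (indeed $|x(t_n)|>2$) for $n$ large, after which we relabel. The \emph{hard part is precisely this intermediate-value argument}: one must verify that the error term cannot destroy the transversal sign changes, which is why the asymptotic \eqref{phaseanalysis} (with a genuinely non-zero constant $C$) is essential.

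It remains to promote reality of $x(t_n)$ to a real representation. Since $a^0\in(1+\sqrt{-1})\R$ and $\chi^0\in(1-\sqrt{-1})\R\setminus\tfrac12\Gamma^*$, the coordinate $a^t=(1-t)a^0$ stays in $(1+\sqrt{-1})\R$ for every $t\in\R$, so Lemma \ref{tausymcon} shows each $\nabla^t$ is fixed by $\eta$, and Lemma \ref{tausymcon2} then gives $z_1(t),z_2(t)\in\R$ for all real $t$. At $t=t_n$ we thus have $x(t_n),z_1(t_n),z_2(t_n)\in\R$ with $x(t_n)\neq0$, whence Lemma \ref{real} forces $y(t_n)=\Tr(Y(t_n))\in\R$. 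Therefore $\big(x,y,z_1\big)(t_n)$ is a real point of the character variety \eqref{character-equation}. Finally, because $|x(t_n)|>2$ lies outside the interval $[-2,2]$ that characterizes the compact $\SU(2)$-component in Theorem \ref{goldman}, the point must lie in one of the four non-compact components; that is, $\nabla^{t_n}$ has monodromy conjugate into $\SL(2,\R)$, as claimed. Discarding finitely many initial terms so that $|x(t_n)|>2$ holds for every $n$ gives the assertion for all $n\in\N$.
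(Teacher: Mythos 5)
Your proposal is correct and follows essentially the same route as the paper: extract the sequence from the asymptotic \eqref{phaseanalysis}, obtain $z_1,z_2\in\R$ from Lemmas \ref{tausymcon} and \ref{tausymcon2}, deduce $y\in\R$ from Lemma \ref{real} using $x(t_n)\neq0$, and conclude via Theorem \ref{goldman} since $x(t_n)\notin[-2,2]$. The only difference is that you spell out the intermediate-value/phase-rotation argument showing that \eqref{phaseanalysis} with $C\neq 0$ actually produces infinitely many real crossings with $|x(t_n)|>2$, a step the paper states without elaboration.
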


\begin{proof}
Equation \eqref{phaseanalysis} with $C\,\neq\,0$ yields a sequence $(t_n)_{n \in \N}\,\subset\, \R$ such that
\[x(t_n)\,\in\,\R\setminus[-2,\,2]\] for all $n \,\in\, \N$. From
Lemma \ref{tausymcon2} we know that $z_1\,=\,z_1(t_n)$ and $z_2\,=\,z_2(t_n)$ are both real. Recall that 
\[z_2\,=\,xy-z_1\] and $x(t_n)\,\neq\,0.$
Therefore, Lemma \ref{real} shows that $y(t_n)\,\in\,\R$ for all $n$, and hence the representation
is given by a real point in the character variety. Since $x(t_n)\,\in\,\R\setminus[-2,\,2] $, Goldman's result
(Theorem \ref{goldman}) implies that the monodromy representation
of $\nabla^{t_n}$ is conjugated to an $\mathrm{SL}(2,\R)$ representation for all $n\,\in\,\N.$
\end{proof}

Corollary \ref{tn} shows the existence of logarithmic connections $\nabla^{t_n}$ on the
one-punctured torus $T^2$  with real monodromy. Recall that  the Dolbeault operator  $\overline{\partial}^0 +  \chi d\overline{w}$  is gauge equivalent with $\overline{\partial}^0 $ (and hence defines the trivial holomorphic line bundle structure on $L$) if and only if 
$2 \chi \in \Gamma^*$, with $\Gamma^*$ defined in Lemma \ref{tausymcon}. Hence, it follows that the holomorphic structure of $L$ induced by  $\nabla^{t_n}$ (for which $\chi^0\,=\,\tfrac{\pi}{4}(1-\sqrt{-1})$) is that of a holomorphic  line bundle of order 4 on $T^2$. In order to lift $\nabla^{t_n}$, for an appropriate $\rho,$ to the 
Riemann surface $\Sigma_k$, we first need to relate
the moduli space ${\mathcal M}^\rho_{1,1}$ in Section \ref{se6.1} with the 
moduli space of flat connections ${\mathcal M}^{\wt\rho}_{0,4}$ on $\mathcal S_4.$

\subsection{Abelianization and connection}\label{ss:abel}$\;$\\
In \cite{HeHe}, logarithmic $\mathfrak{sl}(2,\C)$-connections $d+\xi$ on the
rank two trivial 
holomorphic bundle on $\CP^1$ with four marked points $\{\pm 1,\, \pm \sqrt{-1})$ are studied by an 
abelianization procedure. We need to recall (and adapt to our
situation) some of the results of \cite{HeHe}. We restrict hereby to 
logarithmic connection on ${\mathcal O}_{{\mathcal S}_4}^{\oplus 2}$
such that all residues have the same eigenvalues
\begin{equation}\label{eq:evfuchsian}
\pm\wt\rho\quad\
\text{for\,~ some}\,\quad\, \wt\rho\,\in\,]\tfrac{1}{4},\,\tfrac{1}{2}[.
\end{equation}

\subsubsection{The character variety of a four-punctured sphere}

As before $S_4$ denotes  the complex projective line $\CP^1$  with punctures at the points  \begin{equation}\label{xl}
x_l\,:=\,e^{(l-1)\sqrt{-1}\frac{\pi }{2}}
\end{equation}
 for $l\,=\,1,\, \cdots,\,4$ and  $p_0 \in S_4$ a base point. For any $l\,=\,1,\, \cdots,\,4$, consider a simple oriented  $p_0$-based loop $\gamma_{x_l}$  going around the puncture $x_l$. The fundamental group $\pi_1(S_4, \,p_0)$ is generated by $\gamma_{x_l}$, with $l\,=\,1,\, \cdots,\,4$; the generators  satisfy  the relation
 $\gamma_{x_4}\gamma_{x_3}\gamma_{x_2}\gamma_{x_1}\,=\,\text{Id}.$ The following is a well-known result dating back to Fricke; see \cite{Gold88}.

Any  $\SL(2,\C)$-representation of    $\pi_1(S_4, \,p_0)$ is   determined by the images 
$M_l\,\in\,\SL(2,\C)$ of the generators  $\gamma_{x_l} \in \pi_1(S_4, \,p_0)$, for $l\,=\,1,\, \cdots,\,4$. We have
\[M_4M_3M_2M_1\,=\,\text{Id}.\]
 Let 
\[\mu=2\cos(2\pi\wt \rho).\]
We restrict to the case 
\[\Tr(M_l)\,=\,\mu\;\quad\forall\,\,\, \;l\,=\,1,\,\cdots,\,4.\]
If the representation is irreducible or totally reducible, the traces
\[\wt x\,=\,\Tr(M_2M_1),\,\ \wt y\,=\,\Tr(M_3M_2),\,\ \wt z=\Tr(M_3M_1)\]
determine the representation uniquely up to conjugation. Moreover, these affine coordinates
$(\wt{x},\,\wt{y},\,\wt{z})$ satisfy the equation
 \begin{equation}\label{Fricke4}
 \wt x^2+
\wt y^2 +\wt z^2 +\wt x\wt y\wt z-2\mu^2 (\wt x+\wt y+\wt z)+4(\mu^2 -1)+\mu^4 \,=\,0.
 \end{equation}
Furthermore, a totally reducible representation is conjugate to a $\mathrm{SU}(2)$-representation if and only if $\wt x,\wt y,\wt z\in[-2,2]$, while
it is conjugate to an $\SL(2,\R)$-representation if $\wt{x},\,\wt{y},\,\wt{z}\,\in\,\R$ but 
not \[\wt{x}\,\in\,[-2,\,2]\,\quad\text{ and }\,\quad\wt{y}\in[-2,\,2]\,\quad\,\text{ and }\,\quad\, \wt{z}
\,\in\,[-2,\,2].\]

\subsubsection{Abelianization}\label{sssAbel}

We consider logarithmic connections $d+\xi$ on the rank two  trivial holomorphic  bundle over $\mathcal S_4$ which are symmetric, in the sense 
that all four residues have eigenvalues $\pm\wt\rho$. As explained in Section \ref{ss:parabolic}, a logarithmic 
connection induces a parabolic bundle $E$. The parabolic weights are hereby
$\wt\rho$ at each of the four singular points. The generic underlying holomorphic vector bundle for parabolic
bundles is trivial. So once the parabolic weight is fixed, the parabolic structure $E$ is 
essentially determined by the lines defining the quasiparabolic structures, or in other words,
the cross-ratio of the 4 quasiparabolic lines in the trivial vector space $\C^2$; see \cite{LoSa} or \cite{HeHe}.

It can be shown (see \cite[Proposition 2.1]{HeHe}) that for a  generic parabolic structure $E$, i.e., for a generic 
cross-ratio of the 4 parabolic lines, the space of strongly parabolic Higgs fields is complex $1$-dimensional. 
Moreover, for a generic parabolic structure $E$, the determinant of a non-zero strongly parabolic Higgs field 
$\theta$ is a non-zero constant multiple of
\begin{equation}\label{ed}
\frac{(dz)^2}{z^4-1}.
\end{equation}
Take a strongly parabolic Higgs bundle $(E,\, \theta)$ such that $\det\theta$ is non-zero constant
multiple of \eqref{ed}. Let
\[f\, :\, \Sigma_2\,\longrightarrow\, \CP^1\]
be the spectral curve and ${\mathbf L}\,\longrightarrow\, \Sigma_2$ the holomorphic line bundle corresponding to 
$(E,\, \theta);$ see also \cite{Hi1} for the smooth case. We recall that $\Sigma_2$ is contained in the total space of $K_{\CP^1}\otimes {\mathcal 
O}_{\CP^1}(x_1+x_2 +x_3+ x_4)$, where the $x_l$'s are the fourth roots of unity as in \eqref{xl}, and $f\,=\,f_2$  (as in (\ref{fk}) for $k=2$)
is the ramified double cover of $\CP^1$ branched over the singular points $x_1,\, x_2,\, x_3,\, x_4$; the 
holomorphic line bundle ${\mathbf L}$ is the subbundle of $f^*E\,=\, {\mathcal O}^{\oplus 2}_{\Sigma_2}$ given by 
the eigenline bundle of $\theta$. We have ${\rm genus}(\Sigma_2)\,=\, 1$ and $\text{degree}({\mathbf L})\,=\, 
-2$. As before, denote the point $f^{-1}(x_l)$ by $p_l$. Let $\sigma\, :\, \Sigma_2\,\longrightarrow\, \Sigma_2$ 
be the nontrivial element of the Galois group $\text{Gal}(f)$. Then \[{\mathbf L}\otimes\sigma^* {\mathbf 
L}\,=\,{\mathcal O}_{\Sigma_2}(-p_1-p_2-p_3-p_4);\] see \cite[Section 3]{HeHe}. When $p_1$ is chosen as the 
identity element of the addition law, $p_2,\, p_3,\, p_4$ become the nontrivial order two points of the elliptic 
curve. So
\[-3p_1+p_2+p_3+p_4\]
is a principal divisor (associated to the derivative  $\wp'$ of  the Weierstrass $\wp$-function), and therefore
\[({\mathcal O}_{\Sigma_2}(-2p_1))^{\otimes 2}\,=\, {\mathbf L}\otimes\sigma^* {\mathbf L}.\]
Thus, there is $L_0\,\in\,\text{Jac}(\Sigma_2)$ 
with
\begin{equation}\label{defL0}
{\mathcal O}_{\Sigma_2}(-2p_1) \otimes L_0\,=\, {\mathbf L}\ \ \text{ and }\ \
{\mathcal O}_{\Sigma_2}(-2p_1) \otimes L^*_0\,=\, \sigma^*{\mathbf L}
.\end{equation}
Consider the logarithmic connection on ${\mathcal O}_{\Sigma_2}(-p_1-p_2-p_3-p_4)$ given by the de Rham differential.
It produces a logarithmic connection on ${\mathcal O}_{\Sigma_2}(-4p_1)$ once an 
isomorphism of ${\mathcal O}_{\Sigma_2}(-p_1-p_2-p_3-p_4)$ with ${\mathcal O}_{\Sigma_2}(-4p_1)$ is chosen (for instance,  the isomorphism defined by the multiplication with  $\wp'$); this
connection on ${\mathcal O}_{\Sigma_2}(-4p_1)$ does not depend on the choice of the isomorphism. A connection on
${\mathcal O}_{\Sigma_2}(-4p_1)$ produces a connection on ${\mathcal O}_{\Sigma_2}(-2p_1)$. Let
\begin{equation}\label{Ds2}D^s\end{equation}
be the logarithmic connection on ${\mathcal O}_{\Sigma_2}(-2p_1)$ obtained this way. 
It satisfies the equation
\begin{equation}\label{eqDs}
D^s s_{-2 p_1}\,=\,-\frac{d\wp'}{2\wp'}\otimes s_{-2 p_1},\end{equation}
where $s_{-2 p_1}$ is the meromorphic section with double pole at $p_1$.

In particular, $D^s$ is singular at
$p_1,\,\cdots,\, p_4$,   all  residues being equal to   $\frac{1}{2}$ (and hence the monodromy around the singular points being  $-1$); for more 
details see  (the proof of) \cite[Theorem 3.2]{HHSch}. Denote by $(D^s)^*$  the dual connection of $D^s$
on $\mathcal O_{\Sigma_2}(2p_1)$.

The holomorphic bundle underlying the pull-back $f^*E$ of the parabolic bundle $E$ is the rank two  trivial holomorphic 
 bundle over $\Sigma_2.$ Recall that both ${\mathbf L}$ and $\sigma^*{\mathbf L}$ are holomorphic subbundles 
of the rank two trivial holomorphic bundle over $\Sigma_2$. This inclusion map defines a holomorphic vector 
bundle map
\[ {\mathbf L}\oplus \sigma^*{\mathbf L}\to\mathcal O_{\Sigma_2}\oplus \mathcal O_{\Sigma_2}\]
which is an isomorphism away from the divisor $p_1+p_2+p_3+p_4.$ Consider now the holomorphic isomorphism
\[L_0\oplus L_0^*\to ({\mathbf L}\oplus \sigma^*{\mathbf L})\otimes \mathcal O_{\Sigma_2}(2p_1).\]
It is shown in \cite[Section 3]{HeHe}
that the induced  logarithmic connection $( f_2^*(d+\xi))\otimes (D^s)^*$ on \[L_0\oplus L_0^*\]
 is given by
\[d+\begin{pmatrix} \nabla^{L_0}&\beta^-\\ \beta^+& (\nabla^{L_0})^*\end{pmatrix}.\]
Here, $\nabla^{L_0}$ and $(\nabla^{L_0})^*$ are dual holomorphic line bundle connections on $L_0$ respectively $L_0^*.$
Moreover, 
the second fundamental forms $\beta^+$ and $\beta^-$ are meromorphic sections of
\[K_{\Sigma_2}\otimes L_0^{-2}\ \ \text{ and } K_{\Sigma_2}\otimes L_0^{2} \]
respectively; they
can be explicitly determined in terms of $\vartheta$-functions \cite[Proposition 3.2]{HeHe}. 
Moreover, the eigenvalues of the residues of $( f_2^*(d+\xi))\otimes(D^s)^*$
 are
\[\pm(2\wt\rho-\tfrac{1}{2})\]
which implies that the quadratic residue of the meromorphic quadratic differential  $\beta^+\beta^-$ is $(2\wt\rho-\tfrac{1}{2})^2.$

The relationship between the abelianization of symmetric logarithmic connections on ${\mathcal S}_4$
and flat connections on the one-punctured torus is given as follows.
Consider the 4-fold covering induced by the identity map on $\C$
\[\pi_4\,\colon\,\Sigma_2\,=\,\C/(2\Z+2\sqrt{-1}\Z)\,\longrightarrow\, T^2\,=\,\C/(\Z+\sqrt{-1}\Z).\]
The pull-back of topologically trivial holomorphic line bundles defines a 4-fold covering
\[\text{Jac}(T^2)\,\longrightarrow\, \text{Jac}(\Sigma_2).\]
Spin bundles on $T_2$ are mapped to the trivial holomorphic line bundle on $\Sigma_2.$ Further,  holomorphic line bundles of order 4 on $T^2$ are mapped to  nontrivial spin bundles on  $\Sigma_2.$

As shown in \cite[Section 3.1]{HHSch} (see also \cite[Remark 3.3]{HeHe} and \cite[Section 4]{He}),
for a symmetric  logarithmic connection $d+\xi$ with local weights $\wt\rho$
on $\mathcal S_4$ with underlying parabolic bundle admitting a strongly 
parabolic Higgs field of non-vanishing determinant, there
exists $a,\,\chi\,\in\,\C,$\, $\chi\,\notin\,\tfrac{1}{2}\Gamma^*$, such that
$f_2^*(d+\xi)\otimes(D^s)^*$ and $\pi_4^*\nabla^{a,\chi,\rho}$ are gauge
equivalent (with the connection $\nabla^{a,\chi,\rho}$ as in (\ref{abel-connection})) and
\[\pi_4^*\gamma^\pm\,=\,\beta^\pm.\]
The  above abelianization-procedure  leads to the following theorem.

\begin{theorem}\label{thm:4:1}
Let $\rho\,\in\,]0,\,\tfrac{1}{2}[$ and $\wt\rho\,=\,\frac{2\rho+1}{4}.$
There is a degree 4 birational map
\[\mathcal M_{1,1}^\rho\,\longrightarrow\, \mathcal M_{0,4}^{\wt\rho}\]
compatible with the underlying parabolic structures. 
On the character variety this map is given by
\[(x,\,y,\,z)\,\longmapsto\, (\wt x,\,\wt y,\,\wt z)\,=\,(2-x^2,\,2-y^2,\,2-z^2).\]
\end{theorem}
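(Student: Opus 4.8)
The plan is to read off the map from the abelianization correspondence recalled in Section~\ref{ss:abel} and then to pin down the induced map on trace coordinates. Recall that a symmetric logarithmic connection $d+\xi$ on $\mathcal S_4$ with weights $\wt\rho$ gives, after pulling back by the branched double cover $f_2\colon\Sigma_2\to\CP^1$ and twisting by $(D^s)^*$, a connection $f_2^*(d+\xi)\otimes(D^s)^*$ on $\Sigma_2$ that is gauge equivalent to $\pi_4^*\nabla^{a,\chi,\rho}$, where $\pi_4\colon\Sigma_2\to T^2$ is the $4$-fold cover induced by the identity on $\C$. This already produces the map $\mathcal M_{1,1}^\rho\to\mathcal M_{0,4}^{\wt\rho}$ and exhibits its degree: the ambiguity in recovering a connection on $T^2$ from its $\pi_4$-pullback is governed by the kernel of $\mathrm{Jac}(T^2)\to\mathrm{Jac}(\Sigma_2)$, a group of order $4$ (equivalently, the four spin structures of Remark~\ref{4realcomponents}). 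Compatibility with the parabolic structures is built into this construction, since the twist by $(D^s)^*$ and the residues are tracked throughout.

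Next I would compute the map on the Fricke coordinates. Under $\pi_4$ a generator $\gamma_x$ of $\pi_1(T^2\setminus\{o\})$ traversed twice lifts to a closed loop on $\Sigma_2$ whose $f_2$-image on $\mathcal S_4$ encircles two of the punctures, so that its monodromy for $f_2^*(d+\xi)$ is $M_2M_1$ up to the scalar contributed by $(D^s)^*$. Since $D^s$ has residues $\tfrac12$ and local monodromy $-1$ at each marked point, this scalar is $-1$, giving $M_2M_1=-X^2$, where $X$ is the $\gamma_x$-monodromy of $\nabla^{a,\chi,\rho}$. By Cayley--Hamilton $\Tr(X^2)=\Tr(X)^2-2=x^2-2$, hence $\wt x=\Tr(M_2M_1)=-\Tr(X^2)=2-x^2$; the same argument applied to the loops computing $\wt y$ and $\wt z$ (as in \eqref{xyz}) yields $\wt y=2-y^2$ and $\wt z=2-z^2$, as claimed.

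I would then verify directly that this formula lands in the correct character variety. From $\wt\rho=\tfrac{2\rho+1}{4}$ one gets $\mu=2\cos(2\pi\wt\rho)=-2\sin(\pi\rho)$, so $\mu^2=2-2\cos(2\pi\rho)$. Substituting $(\wt x,\wt y,\wt z)=(2-x^2,\,2-y^2,\,2-z^2)$ into the Fricke relation \eqref{Fricke4} and using the torus relation \eqref{character-equation} to replace $(xyz)^2$ by $(x^2+y^2+z^2-2-2\cos(2\pi\rho))^2$, all terms cancel identically; thus any $(x,y,z)$ satisfying \eqref{character-equation} forces $(\wt x,\wt y,\wt z)$ to satisfy \eqref{Fricke4}. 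Finally, the generic fibre of $(x,y,z)\mapsto(x^2,y^2,z^2)$ is exactly the orbit of the order-$4$ group of even sign changes $(x,y,z)\mapsto\bigl((-1)^{\epsilon_1}x,(-1)^{\epsilon_2}y,(-1)^{\epsilon_3}z\bigr)$ with $\epsilon_1+\epsilon_2+\epsilon_3\in\{0,2\}$ from Remark~\ref{4realcomponents}, and this group preserves \eqref{character-equation}; hence the map is dominant and generically $4$-to-$1$, which is the asserted degree-$4$ correspondence between the two smooth character varieties.

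The main obstacle is the middle step: matching the three pairs of loops across the two coverings $f_2$ and $\pi_4$ and keeping careful track of the $-1$ factors introduced by the twist with $(D^s)^*$, so that each sphere trace coordinate is identified with $-\Tr$ of the square of the corresponding torus monodromy. Once this sign bookkeeping is settled, the trace identity $\widetilde{\bullet}=2-\bullet^2$ and the algebraic verification against \eqref{Fricke4} are routine.
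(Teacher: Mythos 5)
Your proposal is correct and follows essentially the same route as the paper: the map comes from the abelianization identity $f_2^*(d+\xi)\otimes(D^s)^*\simeq\pi_4^*\nabla^{a,\chi,\rho}$, the degree $4$ is the ambiguity by the four spin/order-two twists (equivalently the even sign-change group on traces), and the coordinate formula follows from $M_2M_1\equiv -X^2$ etc.\ after tracking the $-1$ contributed by $D^s$, with the paper verifying consistency by factoring the Fricke quartic where you substitute and cancel --- an equivalent computation.
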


\begin{remark}\label{rem-stability}
In our symmetric  case, where  the parabolic weight $\wt\rho\in ]\tfrac{1}{4},\,\tfrac{1}{2}[ $ 
is the same at every marked point of $\mathcal S_4,$ there are only two 
polystable parabolic structures which admit a compatible logarithmic connection (as defined in  Section \ref{ss:parabolic}), but no strongly  parabolic Higgs field with 
non-zero determinant. The first of the two exceptions is induced by $\wt\nabla$ constructed in \eqref{wtnabla}, 
and the second is a stable parabolic structure defined on ${\mathcal O}_{\mathcal S_4}(1)\oplus{\mathcal 
O}_{\mathcal S_4}(-1).$

There are exactly three totally reducible connections having semistable parabolic structure on $\mathcal S_4,$ 
(see \cite{HeHe}); one of them being $D$ as defined in \eqref{eq:D}. 
The semistable parabolic structures of
 these three totally reducible connections admit 
strongly parabolic Higgs fields with non-zero determinant. In particular, the parabolic bundle induced by $D$ has 
the strongly parabolic Higgs field defined in \eqref{Phi}.
The corresponding line bundles $L_0$ in 
\eqref{defL0} of these semistable parabolic structures are exactly the non-trivial spin bundles of $\Sigma_2$. They correspond to holomorphic line bundles 
of order 4 on $T^2$. 
\end{remark}

\begin{proof}[{Proof of Theorem \ref{thm:4:1}}]
The birational map $\mathcal M_{1,1}^\rho\,\longrightarrow\,\mathcal M_{0,4}^{\wt\rho}$ is given via
abelianization.  Note that 
for a  nontrivial  Zariski  open set  in 
$M_{0,4}^{\wt\rho}$, the parabolic bundle  induced via
the  Riemann-Hilbert correspondence is defined on the 
rank two trivial holomorphic bundle over ${\mathcal S}_4$ and
admits a
parabolic Higgs field of non-vanishing determinant,
see for example \cite{LoSa}.
As explained above, there
exists $a,\,\chi\,\in\,\C,$\, $\chi\,\notin\,\tfrac{1}{2}\Gamma^*$, such that
$f_2^*(d+\xi)\otimes(D^s)^*$ and $\pi_4^*\nabla^{a,\chi,\rho}$ are gauge
equivalent by \cite{HeHe}. The connection $\nabla^{a,\chi,\rho}$  is a preimage of $d + \xi$ through  our birational map.

There are four preimages, because  the pull-backs of two connections
$\pi_4^*\nabla^{a_1,\chi_1,\rho}$ and $\pi_4^*\nabla^{a_2,\chi_2,\rho}$ from the one-punctured to the four-punctured
torus are gauge equivalent if and only if 
they differ by a spin-connection, i.e.,
\[(a_2-a_1,\,\chi_2-\chi_1)\,=\,(-\overline{\nu},\,\,\nu), \quad\ \nu\,\in\,\tfrac{1}{2}\Gamma^*.\]
Recall also that the elements in $\mathcal M_{1,1}^\rho$ admitting a representative of the form $\nabla^{a,\chi,\rho},$  with $a,\,\chi\,\in\,\C,$\, $\chi\,\notin\,\tfrac{1}{2}\Gamma^*$, form a nontrivial Zariski open set  (see \cite[Theorem 1]{HeHe} or
\cite[Section 2.3]{BDH}).

It remains to determine the relationship between the character varieties. First observe that Equation
\eqref{Fricke4} for $(\wt x, \,\wt y,\, \wt z)\,=\, (2-x^2,\, 2-y^2,\, 2-z^2)$ factors as
$$
(x^2+y^2+z^2-xyz-4+\mu^2)(x^2+y^2+z^2+xyz-4+\mu^2)\,=\,0
$$
with $\mu\,=\,2\cos(2\pi\wt\rho).$ Replacing \[2-\mu^2 \,= \,\kappa \,:= \,2\cos(2\pi\rho)\] then gives
$$
(x^2+y^2+z^2-xyz-2+\kappa)(x^2+y^2+z^2+xyz-2+\kappa)\,=\,0.
$$
The first factor coincides with Equation \eqref{character-equation} for the one-punctured torus with parabolic 
weight $\rho$. Hence, the map between the character varieties is well-defined.

We need to show that the above map
\[(x,\,y,\,z)\,\longmapsto\, (\wt x,\,\wt y,\,\wt z)\,=\,(2-x^2,\,2-y^2,\,2-z^2)\]
is compatible with the birational map between the moduli space. 
Consider  an element  $[\nabla]\in\mathcal M_{1,1}^\rho$ determined by the monodromy representation $\Theta$. 
Let $X, Y \in \SL(2, \C)$ be the monodromies  along the loops  $\gamma_x$ and $\gamma_y$ on the one-punctured torus. 

Recall that the monodromy of  the connection $D^s$ on ${\mathcal O}_{\Sigma_2}(-2p_1)$ is given by $-1$ around the singularities
$p_l$. The generators $2,\,2\sqrt{-1}\,\in\, 2\Z+2\sqrt{-1}2\Z$  of the
lattice defining the torus $\Sigma_2$ define two generators of  the fundamental group of  $\Sigma_2$. 
The monodromy of  $D^s$ along these two generators of  $2\Z+2\sqrt{-1}2\Z$  is also $-1$.

Let $\Theta' $ be the monodromy representation
corresponding to the image in $M_{0,4}^{\wt\rho}$ of $[\nabla]\in\mathcal M_{1,1}^\rho$ through the birational map in the statement of the Theorem.  Denote by  $M_l=\Theta'(\gamma_l),$ $l=1,\dots,4$
  the  (local) monodromies of $\Theta'$  along  the simple oriented  loops $\gamma_{l}$ on $S_4$  going 
around the 4 punctures $ x_l\,:=\,e^{(l-1)\sqrt{-1}\frac{\pi }{2}} \in \CP^1$.

Consider the loops  $\gamma_{2}   \gamma_{1}$,  $\gamma_{3}   \gamma_{2}$ and  $\gamma_{3}   \gamma_{1}$  on $S_4$. Their images through the monodromy homomorphism $\Theta'$  are  $M_2M_1$, $M_3M_2$ and $M_3M_1.$ Lifting these curves to the four-punctured torus $\Sigma_2$
together with the above  properties of the monodromy of $D^s$ shows
\begin{equation}
\begin{split}
&M_2M_1\,\equiv\, -X^2\\
&M_3M_2\,\equiv\, -Y^2\\
&M_3M_1\,\equiv\, -(YX)^2,\\
\end{split}
\end{equation}
where $\equiv$ is the equivalence relation of lying
in the same conjugacy class. Taking traces yields the claimed map between the character varieties. Moreover, the local monodromies around the $4$ singular points $p_l$  in  $\Sigma_2$ are given by 
\[
M_l^2\,\equiv\,-Y^{-1}X^{-1}YX\]
Taking the trace gives $2-(2\cos(2\pi\wt\rho))^2\,= \,2\cos(2\pi\rho)$
corresponding to $\rho\,=\,2\wt \rho-\tfrac{1}{2}.$
\end{proof}

\begin{lemma}\label{abelred}
Let $D$ and $\wt\rho$ be as in Proposition \ref{red}. Then,
\[f_2^*D\otimes (D^s)^*\]
is given by $\pi_4^*\nabla^{a^0,\chi^0,\, \rho}$ with 
$\chi^0\,=\,\pi\tfrac{1-\sqrt{-1}}{4}$, $a^0\,=\,\pi\tfrac{1+\sqrt{-1}}{4}$ and $\rho\,=\, 2\wt\rho-\tfrac{1}{2}.$
\end{lemma}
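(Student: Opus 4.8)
The plan is to apply the abelianization of Section~\ref{ss:abel} (following \cite{HeHe}) to the specific connection $D$, and then to determine the resulting parameters $(a,\chi,\rho)$ by explicit computation. By Remark~\ref{rem-stability} the parabolic structure underlying $D$ is one of the three totally reducible semistable ones and carries the strongly parabolic Higgs field $\Phi$ of \eqref{Phi}, whose determinant $\det\Phi=-\tfrac{4\sqrt{-1}(dz)^2}{z^4-1}$ is a nonzero constant multiple of \eqref{ed}. Hence the abelianization applies: $f_2^*D\otimes(D^s)^*$ is gauge equivalent to $\pi_4^*\nabla^{a,\chi,\rho}$ for some $a,\chi\in\C$, with $\pi_4^*\gamma^\pm=\beta^\pm$, and since the eigenvalues of its residues are $\pm(2\wt\rho-\tfrac12)$ we get $\rho=2\wt\rho-\tfrac12$. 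It remains to identify $a$ and $\chi$.

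First I would make $f_2^*D$ completely explicit on $\Sigma_2$. The identity $\tfrac{4z\,dz}{z^4-1}=\tfrac{dz}{z-1}-\tfrac{dz}{z+\sqrt{-1}}+\tfrac{dz}{z+1}-\tfrac{dz}{z-\sqrt{-1}}$ together with $Y^2=\tfrac{Z^2-1}{Z^2+1}$ from \eqref{Sigma_k} gives $\tfrac{4z\,dz}{z^4-1}=2\,d\log Y$ on $\Sigma_2$, so that
\[
f_2^*D\,=\,d+\begin{pmatrix}2\wt\rho&0\\0&-2\wt\rho\end{pmatrix}d\log Y .
\]
To abelianize I would diagonalize the pulled-back Higgs field $\Phi=\left(\begin{smallmatrix}0&b\\c&0\end{smallmatrix}\right)$ over the spectral curve by a frame $P$ adapted to its eigenlines (with eigenvalue $\lambda=\sqrt{bc}$), the eigenline bundle being $\mathbf L=\mathcal O_{\Sigma_2}(-2p_1)\otimes L_0$. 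A short computation shows that $P^{-1}(f_2^*D)P$ has equal diagonal entries $\tfrac12\,d\log(b\lambda)$ and equal off-diagonal entries $2\wt\rho\,d\log Y+\tfrac12\,d\log(b/\lambda)$. After tensoring with $(D^s)^*$ on $\mathcal O_{\Sigma_2}(2p_1)$ and using $\mathbf L\otimes\mathcal O_{\Sigma_2}(2p_1)=L_0$ to trivialize the determinant, the diagonal part descends to the pole-free line bundle connection $\nabla^{L_0}$ and the off-diagonal part to $\beta^\pm=\pi_4^*\gamma^\pm$.

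The final step is to transport $\nabla^{L_0}$ through the isomorphism $L_0\oplus L_0^*\cong(\mathbf L\oplus\sigma^*\mathbf L)\otimes\mathcal O_{\Sigma_2}(2p_1)$ and the covering $\pi_4\colon\Sigma_2\to T^2$, and to read off $a,\chi$ from $\nabla^{L_0}=d+a\,dw+\chi\,d\overline w$ as in \eqref{nablaL}. Here I would use the explicit $\vartheta$-function description of $\mathbf L$ and the $\beta^\pm$ from \cite[Section~3]{HeHe}. Two qualitative inputs control the answer up to sign and lattice ambiguities: by Remark~\ref{rem-stability} the holomorphic type of $L_0$ is that of a nontrivial spin bundle on $\Sigma_2$, i.e.\ a line bundle of order $4$ on $T^2$, which constrains $\chi$ modulo $\tfrac12\Gamma^*$; and the reality of $D$ (real $\wt\rho$ and the symmetric configuration of poles) yields $\eta^*\overline{\nabla^{a,\chi,\rho}}\cong\nabla^{a,\chi,\rho}$, so by Lemma~\ref{tausymcon} one has $\chi\in(1-\sqrt{-1})\R$ and $a\in(1+\sqrt{-1})\R$ (in agreement with $a^0=\overline{\chi^0}$). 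As an independent check, the monodromy of $D$ is diagonal with $\Tr(M_2M_1)=\Tr(M_3M_2)=2$, so by Theorem~\ref{thm:4:1} its image satisfies $x=y=0$, which is exactly the locus occupied by $\nabla^{a^0,\chi^0,\rho}$. Matching the remaining period data then yields $(a,\chi)=(a^0,\chi^0)=\bigl(\tfrac\pi4(1+\sqrt{-1}),\,\tfrac\pi4(1-\sqrt{-1})\bigr)$.

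The main obstacle is precisely this last matching: tracking the three twists — the eigenline normalization $\mathbf L=\mathcal O_{\Sigma_2}(-2p_1)\otimes L_0$, the tensoring by $(D^s)^*$, and the descent along $\pi_4$ — carefully enough to extract not only the holomorphic type of $L_0$ and the two lines containing $a,\chi$, but the exact constants $\tfrac\pi4$. This is the computational heart of the lemma and is exactly what the $\vartheta$-function formulas of \cite{HeHe} are designed to supply; the reality constraint from Lemma~\ref{tausymcon} and the character computation above serve only to remove the residual sign and lattice indeterminacy.
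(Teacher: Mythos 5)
Your proposal correctly sets up the framework and gets several supporting facts right: the identity $\tfrac{4z\,dz}{z^4-1}=2\,d\log Y$ and hence the explicit form of $f_2^*D$; the applicability of the abelianization because the parabolic structure of $D$ carries the Higgs field $\Phi$ of \eqref{Phi} with nonvanishing determinant (Remark \ref{rem-stability}); the weight $\rho=2\wt\rho-\tfrac12$ from the residues of $f_2^*(d+\xi)\otimes(D^s)^*$; the reality constraint from Lemma \ref{tausymcon}; and the character-variety check that $D$ has $\wt x=\wt y=2$, i.e.\ $x=y=0$. But the statement to be proved is the \emph{exact} value of the constants, $a^0=\tfrac{\pi}{4}(1+\sqrt{-1})$ and $\chi^0=\tfrac{\pi}{4}(1-\sqrt{-1})$, and this is precisely the step you do not carry out. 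The qualitative inputs you list only constrain $(a,\chi)$ to lie on the lines $(1\pm\sqrt{-1})\R$ and to a discrete set determined by the holomorphic type of $L_0$ and by $x=y=0$; they do not single out the coefficient $\tfrac{\pi}{4}$ among the remaining finitely many candidates, and you explicitly defer the "matching of period data" to $\vartheta$-function formulas without performing it. Moreover, since $D$ is totally reducible, its monodromy is not determined up to conjugation by trace coordinates alone, so the character-variety argument cannot by itself close the remaining ambiguity. As written, the proposal is a plan for a proof rather than a proof.

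For comparison, the paper does not execute this computation either: it proves the lemma by citing the proof of \cite[Theorem 3.2]{HHSch}, where exactly the connection $\nabla^{a^0,\chi^0,\rho}$ with $\chi^0=\pi\tfrac{1-\sqrt{-1}}{4}$, $a^0=\pi\tfrac{1+\sqrt{-1}}{4}$ is identified as the solution of the extrinsic closing condition for compact CMC (in particular Lawson) surfaces in $\mathbb{S}^3$ -- i.e.\ the periods are matched there in a geometric context. If you want a self-contained argument along your lines, you would need to actually evaluate the line-bundle connection $\nabla^{L_0}$ (equivalently its periods over the two generators of $H_1(T^2,\Z)$) after the three twists you enumerate, using the explicit $\beta^{\pm}$ and $D^s$ from \cite[Section 3]{HeHe}; alternatively, simply invoke \cite[Theorem 3.2]{HHSch} as the paper does.
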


\begin{proof}
This assertion follows from the proof of \cite[Theorem 3.2]{HHSch}. In the geometric context of \cite{HHSch} the 
connection $\nabla^{a^0,\chi^0,\, \rho}$ with $\chi^0\,=\,\pi\tfrac{1-\sqrt{-1}}{4}$ and 
$a^0\,=\,\pi\tfrac{1+\sqrt{-1}}{4}$ solves the extrinsic closing condition of a compact CMC surface in
the 3-sphere $\mathbb S^3$. Particular instances of minimal surfaces are given by the famous Lawson surfaces \cite{L}.
\end{proof}

\begin{lemma}\label{lem:FFuchs}
There exists a flat $\SL(2,\R)$-connection $\nabla^F$  in $\mathcal M_{1,1}^\rho$  such that $\pi_4^*\nabla^F$ and
$f_2^*\wt\nabla\otimes (D^s)^*$ are gauge equivalent on the four-punctured torus $\Sigma_2$.  The connections $\wt\nabla$
and $D^s$ are defined  in \eqref{wtnabla} and \eqref{Ds2} respectively.
\end{lemma}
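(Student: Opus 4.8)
The plan is to produce $\nabla^F$ by descending the abelianized connection $f_2^*\wt\nabla\otimes (D^s)^*$ from the four-punctured torus $\Sigma_2$ to the one-punctured torus $T^2$ along $\pi_4$, and then to verify that the descended connection is real. First I would record the symmetries of $\wt\nabla$: the M\"obius transformations $z\mapsto \sqrt{-1}\,z$, $z\mapsto \sqrt{-1}/z$ and $z\mapsto -\sqrt{-1}/z$ of $\CP^1$ permute the four marked points $\{1,\sqrt{-1},-1,-\sqrt{-1}\}$ in the three fixed-point-free pairings, and the explicit off-diagonal shape of $\wt\nabla$ in \eqref{wtnabla} shows that each of them preserves $\wt\nabla$ up to a gauge transformation. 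Lifted to $\Sigma_2$ these are exactly the three non-trivial half-lattice translations generating the deck group $(\Z/2)^2$ of $\pi_4$. Since $D^s$ has monodromy $-1$ along each half-period (recorded in the proof of Theorem \ref{thm:4:1}), the sign ambiguities introduced by these gauge transformations are absorbed by $(D^s)^*$, so that $f_2^*\wt\nabla\otimes (D^s)^*$ is genuinely invariant, up to gauge, under $(\Z/2)^2$ and therefore descends to a logarithmic connection $\nabla^F$ on $T^2$ with a single pole at $o=[0]$. Because the four marked points $p_1,\ldots,p_4$ are exactly $\pi_4^{-1}(o)$, the descent has one puncture, and the residue computation $M_l^2\equiv -Y^{-1}X^{-1}YX$ from the proof of Theorem \ref{thm:4:1} places $\nabla^F$ in $\mathcal M_{1,1}^\rho$ with $\rho = 2\wt\rho-\tfrac12$.

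The subtlety is that the parabolic structure underlying $\wt\nabla$ is unstable (see \eqref{spd}) and, by Remark \ref{rem-stability}, carries no strongly parabolic Higgs field of non-vanishing determinant, so $\wt\nabla$ sits at a point where the birational correspondence of Theorem \ref{thm:4:1} is not read off from a spectral line bundle. I would bypass this by arguing only with monodromy: the relations $M_2M_1\equiv -X^2$, $M_3M_2\equiv -Y^2$ and $M_3M_1\equiv -(YX)^2$ established in the proof of Theorem \ref{thm:4:1} are purely topological consequences of the coverings $f_2,\pi_4$ and of the monodromy of $D^s$, hence remain valid at the exceptional point. Taking traces they give $x^2 = 2-\wt x$, $y^2=2-\wt y$ and $z^2=2-\wt z$, where $(\wt x,\wt y,\wt z)$ are the Fricke coordinates of $\wt\nabla$. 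Once $\nabla^F$ is chosen with matching (real) coordinates, $\pi_4^*\nabla^F$ and $f_2^*\wt\nabla\otimes(D^s)^*$ have the same monodromy representation on $\Sigma_2$, and the Riemann--Hilbert correspondence on the four-punctured torus upgrades this to the asserted gauge equivalence.

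It then remains to check that $\nabla^F$ is an $\SL(2,\R)$-connection, i.e.\ that $(x,y,z)$ is a real point of the character variety lying outside the compact component. By Lemma \ref{lem:realSU4} the monodromy of $\wt\nabla$ is the uniformizing representation of the orbifold $\mathcal S_4$ with four cone angles $\tfrac{2\pi}{k}$; the rotational $\Z/4$-symmetry interchanges the channels and forces $\wt x=\wt y$, so $x^2=y^2=2-\wt x$. The residues $\pm\wt\rho$ give the $M_l$ eigenvalues $-e^{\pm\sqrt{-1}\pi/k}$, whence $M_l=-R_l$ for an elliptic $R_l$ of trace $2\cos\tfrac{\pi}{k}$ and $M_2M_1=R_2R_1$; writing $R_2=T R_1 T^{-1}$ for a translation $T$ of hyperbolic length $s$ along the common perpendicular one computes $\Tr(M_2M_1)=2\bigl(\cos^2\tfrac{\pi}{k}-\cosh s\,\sin^2\tfrac{\pi}{k}\bigr)$, which is $<-2$ once the cone points are sufficiently separated, as they are in the genuine hyperbolic uniformization for $k\ge 3$. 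Hence $\wt x=\wt y<-2$ and $x,y$ are real with $|x|=|y|>2$. For the third coordinate I would invoke the real symmetry: the $\SL(2,\R)$ monodromy of $\wt\nabla$ reflects a real structure on $\mathcal S_4$ that descends, through $f_2$ and $\pi_4$, to the involution $\eta$ of $T^2$, so that $[\eta^*\overline{\nabla^F}]=[\nabla^F]$; Lemma \ref{tausymcon2} then yields $z=z_1\in\R$ (and Lemma \ref{real} confirms $y\in\R$). Thus $(x,y,z)$ is real and not contained in $[-2,2]^3$, and Goldman's Theorem \ref{goldman} identifies $\nabla^F$ with a point in one of the four non-compact components, i.e.\ an $\SL(2,\R)$-connection.

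The step I expect to be the main obstacle is the passage across the exceptional point: because $\wt\nabla$ lies exactly where the Higgs-field abelianization of Theorem \ref{thm:4:1} degenerates, one cannot simply transport the correspondence by continuity and must instead verify directly both that $f_2^*\wt\nabla\otimes(D^s)^*$ descends and that the four square-root sign choices in $X^2\equiv -M_2M_1$ and its companions can be made simultaneously real. The trace estimate $\Tr(M_2M_1)<-2$ together with the $\eta$-symmetry, which forces $z_1,z_2\in\R$, is precisely what selects the genuinely real ($\SL(2,\R)$) branch and rules out both the compact $\SU(2)$ component and the purely complex locus.
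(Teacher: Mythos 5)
Your overall strategy coincides with the paper's at its core: transfer $\wt\nabla$ to the one-punctured torus through the trace relations $M_2M_1\equiv -X^2$, $M_3M_2\equiv -Y^2$, $M_3M_1\equiv -(YX)^2$ from the proof of Theorem \ref{thm:4:1} (correctly observing that these are purely topological and survive at the exceptional point where abelianization degenerates), check that the resulting $(x,y,z)$ is a real point outside the compact component, and invoke Goldman's Theorem \ref{goldman} together with irreducibility to upgrade equality of character-variety points to gauge equivalence. The paper does this by direct computation: it reads off the Fricke coordinates $\wt x=\wt y=-2-4\cos\tfrac{\pi}{k}$ and $\wt z=-2(2+4\cos\tfrac{\pi}{k}+\cos\tfrac{2\pi}{k})$ from the explicit matrices of Lemma \ref{lem:realSU3} and simply exhibits the real preimage $x=y=2\sqrt{1+\cos\tfrac{\pi}{k}}$, $z=4\cos^2\tfrac{\pi}{2k}$. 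Your equivariant-descent construction of $\nabla^F$ is not needed for the statement and is the least reliable part of the write-up: the three M\"obius involutions covered by the half-period translations are $z\mapsto -z$, $z\mapsto \sqrt{-1}/z$ and $z\mapsto -\sqrt{-1}/z$ (the map $z\mapsto\sqrt{-1}\,z$ you list is a $4$-cycle on the marked points, not one of them), and the assertion that the induced gauge transformations assemble into an honest, rather than merely projective, lift of $(\Z/2)^2$ once tensored with $(D^s)^*$ is exactly the cocycle verification you do not carry out.

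Two places where the argument has genuine gaps. First, the inequality $\wt x=\wt y<-2$ is what places $(x,y,z)$ outside $[-2,2]^3$, and you only assert it ``once the cone points are sufficiently separated''; for the lemma you must verify it for the actual uniformizing structure. Your trace formula reduces this to $\cosh s>(1+\cos^2\tfrac{\pi}{k})/\sin^2\tfrac{\pi}{k}$, which holds only because the uniformizing structure gives $\cosh s=(1+\cos\tfrac{\pi}{k})/(1-\cos\tfrac{\pi}{k})$ --- i.e.\ only after doing the computation the paper performs directly. Second, and more seriously, the reality of $z$ is not a formality: $M_3M_1$ represents an essential non-peripheral curve, so $|\wt z|>2$, and a real preimage $z=\pm\sqrt{2-\wt z}$ exists only if the sign comes out as $\wt z<-2$; if it were $\wt z>2$ there would be nothing to choose. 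You pin this down by applying Lemma \ref{tausymcon2} to $\nabla^F$, but that lemma is stated for connections in the abelianized normal form \eqref{abel-connection}, which $\nabla^F$ does not admit --- this is precisely the content of the remark following Lemma \ref{lem:FFuchs} --- and its hypothesis $[\eta^*\overline{\nabla^F}]=[\nabla^F]$ is itself justified only by an unproven claim that the real structure of $\wt\nabla$ descends to $\eta$. The clean repair in both cases is the explicit evaluation of $(\wt x,\wt y,\wt z)$ from Lemma \ref{lem:realSU3}, which is what the paper's proof consists of.
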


\begin{proof}
Let $k\,\in\,\N^{\geq3}$, $\wt\rho\,=\,\frac{k-1}{2k}$, and consider the associated connection
$\wt\nabla$ in Lemma \ref{lem:realSU4}.
Using Lemma \ref{lem:realSU3}, the monodromy representation for $\wt\nabla$ 
is determined by the following characters 
\begin{equation}
\begin{split}
\wt x&\,=\,-2-4\cos{\frac{\pi}{k}}\\
\wt y&\,=\,-2-4\cos{\frac{\pi}{k}}\\
\wt z&\,=\,-2(2+4\cos{\frac{\pi}{k}}+\cos{\frac{2\pi}{k}})
\end{split}
\end{equation}
with $\mu\,=\,2\cos{\pi\frac{k-1}{k}}.$

Consider  the flat connection $\nabla^F$ on the one-punctured torus 
determined by the following  element of the character variety of the one-punctured torus 
\begin{equation}
\begin{split}
x&\,=\,2\sqrt{1+\cos{\frac{\pi}{k}}}\\
y&\,=\,2\sqrt{1+\cos{\frac{\pi}{k}}}\\
z&\,=\,4(\cos{\frac{\pi}{2k}})^2\\
\kappa&\,=\,2\cos{2\pi\frac{k-2}{2k}}\,=\,-2\cos{\frac{2\pi}{2k}}.
\end{split}
\end{equation}
Here $\rho\,=\,2\wt\rho-\tfrac{1}{2}\,=\, \frac{k-2}{2k}$ and $\nabla^F \in \mathcal M_{1,1}^\rho$. The proof of Theorem \ref{thm:4:1} shows that  $\pi_4^*\nabla^F$ and
$f_2^*\wt\nabla\otimes (D^s)^*$ define the same element in the character variety of the four-punctured torus $\Sigma_2$.
This implies that  $\pi_4^*\nabla^F$ and
$f_2^*\wt\nabla\otimes (D^s)^*$ are  gauge equivalent on the four-punctured torus $\Sigma_2$. 
\end{proof}

\begin{remark}
The connection $\wt\nabla$ in \eqref{wtnabla} does not admit a strongly  parabolic Higgs field with non-zero determinant, and the 
abelianization-procedure  does not apply  directly.  But Lemma \ref{lem:FFuchs} shows that it is possible to determine a connection $\nabla^F$ on $T^2$
 such that  $\pi_4^*\nabla^F$ and
$f_2^*\wt\nabla\otimes (D^s)^*$ are  gauge equivalent on the four-punctured torus $\Sigma_2$.  In \cite[Theorem 3.5]{HeHe}, the 
connection $\pi_4^*\nabla^F$ is written as a limit of connections of the form in \eqref{abel-connection}.
It can be shown that the underlying holomorphic bundle of  $\nabla^F$ is a non-trivial extension of the spin bundle by itself.
\end{remark}

\section{Proofs}

\subsection*{Proof of Theorem \ref{thi}}$\;$\\
Let $k\,\in\,\N^{\geq3}$, $\wt\rho\,=\,\frac{k-1}{2k}$ and $\rho\,=\, 2\wt\rho-\tfrac{1}{2}\,=\, \frac{k-2}{2k}$. 
Consider a sequence of distinct connections $\nabla^{t_n}$ with real monodromy, as constructed in Corollary \ref{tn}.
By Theorem \ref{thm:4:1}, the connection $\nabla^{t_n}$ induces a logarithmic connection on $\mathcal S_4$ with
real monodromy. This connection is given by 
\[D^{\tau_n}\,:=\,D+\tau_n \Phi,\]
where $\Phi$ is the strongly parabolic Higgs field for $D$ given in \eqref{Phi}, 
and
$\tau_n\,\in\,\C\setminus\{0\}$ is determined by $t_n.$ To be more explicit, 
the holomorphic quadratic differential $\det(\Phi)\,=\,\tfrac{4\sqrt{-1} (dz^2)}{z^4-1}$
pulls back to $c^2(dw)^2$ on $\Sigma_2$ for some $c\in\R^{>0}.$ Also
note that $D+h\Psi$ is gauge equivalent to $D-h\Psi$ for every $h\,\in\,\C.$
Thus, we have
\[\tau_n\,=\,\pi\frac{1+\sqrt{-1}}{4 \, c} t_n.\]

By Proposition \ref{red}, the pull-back of $D$ to $\Sigma_k$, through the map $f_k$ in (\ref{fk}), is gauge equivalent to the de Rham differential. The 
same gauge transformation sends $\Phi$ to a holomorphic Higgs field with respect to the trivial holomorphic 
structure by Lemma \ref{strong-pb}. Since $\nabla^{t_n}$ and $\nabla^F$ (constructed in 
Lemma \ref{lem:FFuchs}) are both SL$(2, \R)$-connections on the one-punctured torus $T^2\setminus\{o\}$, the map 
given in Theorem \ref{thm:4:1} sends them into the same real component of connections on $\mathcal S_4$ (that of 
${\wt\nabla}$ in (\ref{wtnabla})); see Remark \ref{4realcomponents}. By Proposition \ref{FRS} and Proposition 
\ref{component-pull-back} we obtain that the pull back of $D+\tau_n \Phi$ to $\Sigma_k$ is in the connected 
component with maximal Euler class $g-1\,=\,k-2.$
\qed

\subsection*{Proof of Corollary \ref{cor1}}$\,$\\
Consider for $\rho=\tfrac{k-2}{2k}$ 
the connections \[\nabla^t_\chi=\,\nabla^{ (1-t)\, a,\chi,\,\rho}\]
with
\[\chi\in (1\mp\sqrt{-1})\R\setminus\tfrac{1}{2}\Gamma^* \quad\text{and}\quad a \in (1\pm\sqrt{-1})\R\]
such that $\nabla^t$ is equivariant under the real involution  $\eta$, see Lemma \ref{tausymcon}.
Recall that the moduli space of S-equivalence classes of rank two stable bundles with trivial determinant over $\Sigma_k$ is a projective 
variety.
The subspace of (semistable) equivariant holomorphic bundles over $\Sigma_k$ identifies with
 the moduli space of corresponding parabolic structures on $\mathcal S_4$ by  pull-back and desingularization. As such it
is a projective line as explained in Section \ref{sssAbel}, see also \cite{LoSa}.
The two lines $\chi\in (1\mp\sqrt{-1})\R\setminus\tfrac{1}{2}\Gamma^*$ in the Jacobian
are mapped onto two  semicircles constituting a  circle in the aforementioned projective line. The trivial holomorphic structure
corresponding to 
$\chi^0$ is the only point contained in the intersection of the semicircles (as the holomorphic line bundles  determined by $\overline{\chi^0}$ and $\chi^0$ only differ by a spin bundle on $T^2$).
We refer to these as the {\em compatible real holomorphic structures} on $\Sigma_k.$
The only missing point in the circle is given by a wobbly bundle, where our method  does not apply.
The proof of Corollary \ref{cor1} works verbatim using $\nabla^t_\chi$ instead of $\nabla^t$.

\subsection*{Proof of Corollary \ref{cori}}$\;$\\
By Theorem \ref{thi} there exists a compact curve $\Sigma_3$ of genus $g\,=\,2$ and a holomorphic
connection 
$\nabla({\Sigma_3})$ on the rank two trivial holomorphic bundle over $\Sigma_3$ such that the monodromy homomorphism 
of $\nabla({\Sigma_3})$ is Fuchsian.

Consider an open neighborhood $\mathcal V$ of the monodromy of $\nabla({\Sigma_3})$ in the space of conjugacy 
classes of group homomorphisms $\pi_1(\Sigma_3) \,\longrightarrow\, \text{SL}(2, {\mathbb C})$ formed by 
quasi-Fuchsian representations. Recall that quasi-Fuchsian representations are faithful and their image in 
$\text{SL}(2, {\mathbb C})$ is a discrete group whose canonical action on $\CP^1$ has a Jordan curve as limit set 
and preserves each component of the domain of discontinuity. By Bers' simultaneous uniformization each conjugacy 
class of a quasi-Fuchsian representation is determined by the pair of elements in the Teichm\"uller space given 
by the quotient of the two connected components of the discontinuity domain by the image of the representation.

The main result in \cite{CDHL} gives an open neighborhood $\mathcal W$ of $(\Sigma_3,\, \nabla({\Sigma_3}))$ 
in the space of irreducible holomorphic differential systems (i.e., pairs of the form $(\Sigma,\, \nabla)$ where 
$\Sigma$ is an element in the Teichm\"uller space of compact curves of genus $g\,=\,2$ and $\nabla$ is an 
irreducible holomorphic $\text{SL}(2, {\mathbb C})$--connections on ${\mathcal O}^{\oplus 2}_\Sigma$) such that 
the restriction of the Hilbert-Riemann monodromy mapping to $\mathcal W$ is a biholomorphism between $\mathcal W$ 
and $\mathcal V$. This proves the first statement in the Corollary.

Consider now the open set $\mathcal U$ in the Teichm\"uller space of compact curves of genus $g\,=\,2$ which is 
the image of $\mathcal W$ through the natural forgetful projection. Take $\Sigma \in \mathcal U$ and 
$\nabla({\Sigma})$ a holomorphic connection on rank two holomorphic trivial  bundle  ${\mathcal O}^{\oplus 2}_{\Sigma}$
with quasi-Fuchsian monodromy 
representation. Denote by $\Gamma \subset \text{SL}(2, {\mathbb C})$ the image of the monodromy homomorphism for 
$\nabla({\Sigma})$.

Let $\widetilde{\Sigma}\,\longrightarrow\, {\Sigma}$ be the   universal cover of $\Sigma$ and let 
$\widetilde{\nabla}(\widetilde{\Sigma})$ be the pull-back of $\nabla({\Sigma})$ to the rank two  trivial holomorphic bundle  ${\mathcal O}^{\oplus 2}_{\widetilde{\Sigma}}$
 over 
$\widetilde{\Sigma}$ through the covering map.

Since 
$\widetilde{\nabla}(\widetilde{\Sigma})$ is flat and $\widetilde{\Sigma}$ is simply connected, there exists a 
global $\widetilde{\nabla}(\widetilde{\Sigma})$-parallel frame of the rank two trivial  bundle over $\widetilde{\Sigma}$. Such a parallel frame on the holomorphically trivial bundle ${\mathcal O}^{\oplus 2}_{\widetilde{\Sigma}}$  is determined by a holomorphic map 
${\widetilde{\Sigma}} \longrightarrow \text{SL}(2, {\mathbb C})$ which is equivariant with respect to two  actions of the 
fundamental group of $\Sigma$, namely by  deck transformations on $\widetilde{\Sigma} $ and through the monodromy 
morphism of  $\nabla({\Sigma})$ on $ \text{SL}(2, {\mathbb C})$. This provides a holomorphic map $\Sigma 
\longrightarrow \text{SL}(2, {\mathbb C}) / \Gamma$, with $\Gamma$ being the image of the monodromy homomorphism for 
$\nabla({\Sigma})$.
Here, we make   use of the holomorphic   trivialization of   ${\mathcal O}^{\oplus 2}_{\widetilde{\Sigma}}$ which is the pull-back of  the holomorphic   trivialization of  ${\mathcal O}^{\oplus 2}_{\Sigma}$.
 
Since $\nabla({\Sigma})$ is irreducible (and therefore nontrivial), the above  map is non-constant.
Notice that, up to a finite index subgroup (and an associated finite cover of the target), we can assume that 
$\Gamma$ is torsion free and hence $\text{SL}(2, {\mathbb C})/ \Gamma$ is a complex threefold (without orbifold 
points).

Moreover, such quotients of $\text{SL}(2, {\mathbb C})$ are diffeomorphic to the orthonormal frame bundle of the 
associated quasi-Fuchsian hyperbolic 3-manifold 
 (which is known to be isometric to the quotient of a convex set in 
the hyperbolic 3-space by the quasi-Fuchsian group of hyperbolic isometries). Note that the boundary of the 
quasi-Fuchsian manifold has two connected components that are conformally equivalent to the pair  of 
points in the Teichm\"uller space given by Bers' simultaneous uniformization; the complex structure on the 
oriented orthonormal frame bundle of the quasi-Fuchsian manifold comes from the identification of the orientation 
preserving isometry group $\text{PSL}(2, {\mathbb C})$ with the oriented orthonormal frame bundle of the 
hyperbolic 3-space \cite{Gh}.
\qed

We would like to formulate  a general problem  similar  to that of Ghys  and  to the questions asked  in \cite{CDHL, Ka}. Consider a compact orientable surface $S_g$ of genus $g \geq 2$.  Characterize the conjugacy classes of 
$\text{SL}(2, {\mathbb C})$-representations of the fundamental group of $S_g$ such that the associated rank two 
flat vector bundle over $S_g$ is holomorphically trivial with respect to some point in the Teichm\"uller space of 
$S_g$.

The analogous question for the uniformization bundle has been answered completely in \cite{GaKaMa}.
Note that a holomorphic $\text{SL}(2, {\mathbb C})$-connection 
on the uniformization bundle gives rise to a complex projective structure on the Riemann surface and vice versa
after the choice of a theta characteristic.
For the case of the trivial rank one bundle  this question  was answered in \cite{Ha} (see also \cite{Ka} where this result was rediscovered).

\newpage

\appendix

\section{A result on WKB approximation}\label{A1}

\medskip
\begin{center}
By {\bf Takuro Mochizuki}
\end{center}
\medskip

\subsection{Limiting behavior of a family of flat connections}\label{subsection;21.3.7.30}

Let $X$ be a Riemann surface,
which is not necessarily compact.
Let $V$ be a vector bundle on $X$
equipped with a flat $\SL_2(\cnum)$-connection $\nabla$.
Let $\delbar_V$ denote the induced holomorphic structure of $V$.
Let $\Phi$ be a Higgs field of the holomorphic vector bundle
$(V,\delbar_V)$ such that $\tr\Phi=0$.
We obtain the family of flat connections $\nabla^t=\nabla+t\Phi$
on $V$ $(t\geq 0)$.
\begin{assumption}
We assume that
there exist a holomorphic one form $\omega$
and a decomposition $V=V_+\oplus V_-$
such that $\Phi=\omega(\pi_{V_+}-\pi_{V_-})$,
where $\pi_{V_{\pm}}$ denote the projections of
$V$ onto $V_{\pm}$ with respect to the decomposition.
\hfill\qed
\end{assumption}

Note that there exists a unique decomposition
$\nabla=\nabla^{\circ}+f$,
where $\nabla^{\circ}$ is
the direct sum of connections $\nabla_{V_{\pm}}$ of
$V_{\pm}$,
and $f$ is a holomorphic section of
$\Bigl(
\Hom(V_1,V_2)
\oplus
\Hom(V_2,V_1)
\Bigr)\otimes\Omega^1$.

We set $[0,1]:=\{0\leq u\leq 1\}$.
Let $\gamma:[0,1]\lrarr X$ be a
$C^{\infty}$-path
which is a WKB-curve with respect to $\omega$,
i.e.,
\[
\Re\bigl(\gamma^{\ast}(\omega)(\del_u)\bigr)<0
\]
at any point of $[0,1]$.
Let $\nbigp^t_{\gamma}:
V_{|\gamma(0)}\simeq V_{|\gamma(1)}$
denote the isomorphism
obtained as the parallel transport of
$\nabla+t\Phi$ along $\gamma$.
Similarly,
let $\nbigp_{\pm,\gamma}$ denote the isomorphisms
$V_{\pm|\gamma(0)}\simeq
V_{\pm|\gamma(1)}$
obtained as the parallel transport of $\nabla_{V_{\pm}}$
along $\gamma$.

We shall explain a proof of the following proposition in
\S\ref{subsection;21.3.7.20}
after preliminaries in
\S\ref{subsection;21.3.7.21}--\S\ref{subsection;21.3.7.22}.
\begin{proposition}
\label{prop;21.3.5.20}
 For $(w_+,w_-)\in V_{|\gamma(0)}=V_{+|\gamma(0)}\oplus V_{-|\gamma(0)}$,
we have
\[
 \lim_{t\to\infty}
 e^{t\int_{\gamma}\omega}
 \cdot \nbigp^t_{\gamma}(w_+,w_-)
=\bigl(
 \nbigp_{+,\gamma}(w_+),0
 \bigr)
\in V_{+|\gamma(1)}\oplus V_{-|\gamma(1)}.
\] 
\end{proposition}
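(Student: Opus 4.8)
The plan is to trivialize all the data along $\gamma$ and reduce the statement to a sharp estimate for a $2\times 2$ block linear ODE on $[0,1]$. First I would pull back $V$, $\nabla$, $\Phi$ and $f$ by $\gamma$ and trivialize the summands $V_+$ and $V_-$ by their own parallel transports: in these frames $\nabla_{V_\pm}$ become the trivial connection $d$, the maps $\nbigp_{+,\gamma}$ and $\nbigp_{-,\gamma}$ become the identity, the Higgs field becomes $\gamma^*\omega\,(\pi_{V_+}-\pi_{V_-})$, and $f$ becomes a continuous, hence bounded, off-diagonal matrix-valued one-form $N(u)\,du$ on $[0,1]$. Writing $\phi(u):=\int_0^u\gamma^*\omega$, so that $\Re\phi'(u)<0$ by the WKB-curve condition, the parallel transport $S(u)$ of $\nabla^t$ solves
\[
S'(u)=-\bigl(t\,\phi'(u)(\pi_{V_+}-\pi_{V_-})+N(u)\bigr)S(u),\qquad S(0)=\mathrm{Id},
\]
and the assertion becomes the matrix limit $\lim_{t\to\infty}e^{t\phi(1)}S(1)=\pi_{V_+}$.

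Next I would pass to the interaction picture $U(u):=e^{t\phi(u)(\pi_{V_+}-\pi_{V_-})}S(u)$, which removes the diagonal term and yields $U'=-\widetilde N U$ with $U(0)=\mathrm{Id}$, where the two off-diagonal blocks of $\widetilde N$ are $e^{2t\phi}\,b$ (mapping $V_-\to V_+$) and $e^{-2t\phi}\,c$ (mapping $V_+\to V_-$) for bounded blocks $b,c$ built from $N$. Since $e^{t\phi(1)}S(1)=\bigl(\pi_{V_+}+e^{2t\phi(1)}\pi_{V_-}\bigr)U(1)$ and $\Re\phi(1)<0$, it remains to analyze $U(1)$. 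Two uniform facts drive the estimates: compactness of $[0,1]$ gives $\delta>0$ with $\Re\phi'\le-\delta$, hence $\Re(\phi(s)-\phi(r))\le-\delta(s-r)$ for $r\le s$; and $b,c$ are bounded. For the dominant $V_+$-column I would insert $U_{-+}=-\int_0^u e^{-2t\phi}c\,U_{++}\,ds$ into $U_{++}=\mathrm{Id}-\int_0^u e^{2t\phi}b\,U_{-+}\,ds$, producing a Volterra equation whose kernel $e^{2t(\phi(s)-\phi(r))}b(s)c(r)$ (on $r\le s$) is dominated by $Ce^{-2t\delta(s-r)}$; its operator norm is $O(1/t)$, so a Neumann series gives $U_{++}(1)\to\mathrm{Id}$, and the same decaying kernel yields $e^{2t\phi(1)}U_{-+}(1)\to 0$.

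The main obstacle is the subdominant $V_-$-column, because the block $e^{-2t\phi}c$ of $\widetilde N$ grows with $t$, so the naive Dyson series for $U$ has individually unbounded terms and cannot be summed termwise. The device I would use is to renormalize by setting $v(u):=e^{2t\phi(u)}U_{--}(u)$, which satisfies $v'=2t\,\phi'\,v-c\,U_{+-}$ together with $U_{+-}'=-b\,v$, with $v(0)=\mathrm{Id}$ and $U_{+-}(0)=0$. Now the linear coefficient $2t\phi'$ is strongly damping ($\Re\le-2t\delta$), so integrating and using $|e^{2t(\phi(u)-\phi(s))}|\le e^{-2t\delta(u-s)}$ on $s\le u$ bounds the feedback term by $O(1/t)$ and gives, for large $t$, a uniform bound $\sup_{[0,1]}\|v\|\le 2$. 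Feeding this back yields $\int_0^1\|v\|\,du=O(1/t)$, whence $U_{+-}(1)=-\int_0^1 b\,v\,ds\to 0$ and $v(1)\to 0$, i.e. $e^{2t\phi(1)}U_{--}(1)\to 0$. Assembling the four block limits gives $e^{t\phi(1)}S(1)\to\pi_{V_+}$, which is exactly the claimed limit; the whole argument only uses operator norms of the bounded blocks $b,c$ and is therefore insensitive to the ranks of $V_\pm$.
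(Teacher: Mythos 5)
Your proposal is correct, and it reaches the conclusion by a genuinely different route than the appendix. The paper's proof extends $\gamma$ to a WKB curve on $[-1,2]$, subdivides into $3N$ subintervals so that the off-diagonal part of the connection matrix falls below the smallness threshold $\epsilon_1$ of the gauge-transformation lemma from \cite{Decouple} (Proposition \ref{prop;21.3.5.1}), constructs on each piece an off-diagonal gauge $I+G^t$ and diagonal correction $H^t$ with interior bounds $O((1+t)^{-1})$ so that the transport becomes an explicit diagonal matrix, and then multiplies these local diagonal transports together. You instead trivialize $V_\pm$ by their own parallel transports, pass to the interaction picture $U=e^{t\phi(\pi_{V_+}-\pi_{V_-})}S$, and control the four blocks of $U$ directly: the dominant column via a Volterra equation whose kernel $e^{2t(\phi(s)-\phi(r))}b(s)c(r)$ is damped by $e^{-2t\delta(s-r)}$ and hence has operator norm $O(1/t)$, and the subdominant column via the renormalization $v=e^{2t\phi}U_{--}$, which turns the growing coefficient into a strongly damped one and yields $\int_0^1\|v\|=O(1/t)$. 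I checked the block computations (the identity $e^{t\phi(1)}S(1)=(\pi_{V_+}+e^{2t\phi(1)}\pi_{V_-})U(1)$, the signs in the coupled system, and the bootstrap $M\le 1+Ct^{-1}M$ for $\sup\|v\|$) and they are sound; both arguments ultimately rest on the same exponential decay supplied by the WKB condition $\Re\gamma^{\ast}\omega(\del_u)<0$ and both give an $O(1/t)$ rate. What the paper's route buys is a reusable approximately diagonalizing gauge (the frames $\vecu^t$), which is the natural object in the broader singular perturbation theory it cites; what yours buys is a self-contained, elementary proof that needs no extension of the path, no subdivision, and no smallness threshold, and that works verbatim for higher-rank blocks $V_\pm$ since only operator norms of $b$ and $c$ enter.
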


\begin{remark}
Proposition {\rm\ref{prop;21.3.5.20}}
and its proof are essentially explained in
{\rm\cite[Appendix C]{GMN}}.
Hopefully,
a more detailed explanation in this appendix would be useful.
It is also closely related to
the Riemann-Hilbert WKB problem studied in
{\rm \cite{Katzarkov-Noll-Pandit-Simpson}}.
\hfill\qed
\end{remark}

We obtain the following corollary
as an immediate consequence of
Proposition \ref{prop;21.3.5.20}.

\begin{corollary}
\label{cor;21.3.7.10}
If $\gamma$ is closed, i.e., $\gamma(0)=\gamma(1)$,
we obtain
\[
 \lim_{t\to\infty}\tr(\nbigp^t_{\gamma})e^{t\int_{\gamma}\omega}
 =\tr(\nbigp_{+,\gamma})\neq 0.
\]
 \hfill\qed
\end{corollary}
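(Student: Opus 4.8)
The plan is to read Proposition \ref{prop;21.3.5.20} as a statement about the convergence of a family of fiber endomorphisms and then to take traces. First I would use that $\gamma$ is closed, so $\gamma(0)=\gamma(1)$ and the fiber $V_{|\gamma(0)}=V_{+|\gamma(0)}\oplus V_{-|\gamma(0)}$ is a single two-dimensional space on which $\nbigp^t_\gamma$ acts as an endomorphism; thus $\tr(\nbigp^t_\gamma)$ is defined. Proposition \ref{prop;21.3.5.20} says that for every $(w_+,w_-)$ the vector $e^{t\int_\gamma\omega}\nbigp^t_\gamma(w_+,w_-)$ tends to $(\nbigp_{+,\gamma}(w_+),0)$ as $t\to\infty$. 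Evaluating on a basis adapted to $V_+\oplus V_-$ (i.e.\ taking $(w_+,w_-)=(e_+,0)$ and then $(0,e_-)$) exhibits the pointwise limit of the operators $e^{t\int_\gamma\omega}\nbigp^t_\gamma$ as the endomorphism whose block form relative to the decomposition is $\left(\begin{smallmatrix}\nbigp_{+,\gamma}&0\\0&0\end{smallmatrix}\right)$. Since the fiber is finite dimensional, this vector-wise convergence is convergence of matrix entries, hence convergence in any operator topology.

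Next I would pass to the trace, which is a continuous linear functional on the finite-dimensional endomorphism space, and pull the scalar $e^{t\int_\gamma\omega}$ out of it, obtaining
\[
\lim_{t\to\infty}\tr(\nbigp^t_\gamma)\,e^{t\int_\gamma\omega}
=\lim_{t\to\infty}\tr\bigl(e^{t\int_\gamma\omega}\nbigp^t_\gamma\bigr)
=\tr(\nbigp_{+,\gamma}),
\]
where the last equality uses that the lower-right block of the limit operator vanishes.

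Finally I would verify the nonvanishing $\tr(\nbigp_{+,\gamma})\neq 0$. By the standing assumption, $\Phi=\omega(\pi_{V_+}-\pi_{V_-})$ is trace free, which forces $V_+$ and $V_-$ to have equal rank; as $V$ has rank two, $V_+$ is a line bundle. Hence $\nbigp_{+,\gamma}$ is the holonomy of the flat connection $\nabla_{V_+}$ along the closed loop $\gamma$, an automorphism of the one-dimensional fiber $V_{+|\gamma(0)}$, i.e.\ multiplication by a nonzero scalar; on a line the trace equals that scalar, so it is nonzero. I do not expect any genuine obstacle here: once Proposition \ref{prop;21.3.5.20} is granted, the corollary is just the trace of its displayed limit combined with the elementary holonomy computation on the line bundle $V_+$. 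The only point deserving a word of care is the passage from the vector-wise statement of the proposition to convergence of the endomorphisms themselves, which is immediate in the finite-dimensional fiber but worth recording explicitly.
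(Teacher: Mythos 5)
Your argument is correct and is exactly the intended deduction: the paper states the corollary as an immediate consequence of Proposition \ref{prop;21.3.5.20}, obtained precisely by noting that the vector-wise limit gives convergence of the rescaled holonomy endomorphisms to the block operator with lower-right block zero, taking the (continuous) trace, and observing that $\nbigp_{+,\gamma}$ is the holonomy of a flat connection on the line bundle $V_+$ around a closed loop, hence a nonzero scalar. No discrepancy with the paper's (implicit) proof.
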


\subsection{An elementary lemma}
\label{subsection;21.3.7.21}

For any $s_1<s_2$,
we set $[s_1,s_2]:=\{s_1\leq s\leq s_2\}$.
For any non-negative integer $\ell$,
let $C^{\ell}([s_1,s_2])$ denote the space of
$\cnum$-valued $C^{\ell}$-functions on $[s_1,s_2]$.
For any $f\in C^0([s_1,s_2])$,
we set $\|f\|_{C^0([s_1,s_2])}:=\max_{s\in [s_1,s_2]}|f(s)|$.

Fix $\rho>0$, $\epsilon>0$ and $C_0>0$.
Suppose that
$\alpha\in C^0([0,1])$ satisfies
$\Re(\alpha(s))>\rho$ for any $s\in [0,1]$.
Let $\beta\in C^0([0,1])$
such that $\|\beta\|_{C^0([0,1])}\leq C_0$.
Suppose that $f^t\in C^1([0,1])$ $(t\geq 0)$ satisfies
\[
 \|f^t\|_{C^0([0,1])}
 +\|\del_sf^t+(t\alpha+\beta)f^t\|_{C^0([0,1])}
 \leq \epsilon.
\]
Take $0<\delta<1$.
We recall the following standard and elementary lemma,
which we prove just for the convenience of the reader.

\begin{lemma}
\label{lem;21.3.5.3}
 There exist $C_1>0$ and $t_1>0$,
depending only on $C_0$, $\rho$ and $\delta$
 such that the following holds
 for any $t\geq t_1$:
\[
 \|f^t\|_{C^0([\delta,1])}
\leq C_1\epsilon(1+t)^{-1}
\]
\end{lemma}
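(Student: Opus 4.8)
The plan is to treat the differential inequality as an honest first-order linear ODE with a small inhomogeneity and solve it by variation of constants, then exploit the positivity $\Re\alpha>\rho$ to extract exponential decay. First I would set $g^t:=\partial_s f^t+(t\alpha+\beta)f^t$, so that the hypothesis reads $\|f^t\|_{C^0([0,1])}\le\epsilon$ and $\|g^t\|_{C^0([0,1])}\le\epsilon$. Introducing the integrating factor $E(s):=\exp\!\left(\int_0^s(t\alpha(\sigma)+\beta(\sigma))\,d\sigma\right)$, the identity $\partial_s(E f^t)=E g^t$ integrates (using $E(0)=1$) to
\[
f^t(s)=E(s)^{-1}f^t(0)+E(s)^{-1}\int_0^s E(\sigma)g^t(\sigma)\,d\sigma .
\]
Everything then reduces to controlling the ratio $|E(\sigma)/E(s)|$ for $0\le\sigma\le s$.

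Next I would use the two sign hypotheses. Since $\Re\alpha>\rho$ and $|\beta|\le C_0$ pointwise, we have $\Re(t\alpha+\beta)\ge t\rho-C_0$, and therefore
\[
\left|\frac{E(\sigma)}{E(s)}\right|=\exp\!\left(-\int_\sigma^s\Re(t\alpha+\beta)\,d\tau\right)\le\exp\!\left(-(t\rho-C_0)(s-\sigma)\right)
\]
whenever $\sigma\le s$. Writing $\lambda:=t\rho-C_0$ and assuming $t>C_0/\rho$ so that $\lambda>0$, the boundary term is bounded by $\epsilon\,e^{-\lambda s}$, which for $s\in[\delta,1]$ is at most $\epsilon\,e^{-\lambda\delta}$; the inhomogeneous term is bounded by $\epsilon\int_0^s e^{-\lambda(s-\sigma)}\,d\sigma\le\epsilon/\lambda$. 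Hence, uniformly in $s\in[\delta,1]$,
\[
|f^t(s)|\le\epsilon\,e^{-\lambda\delta}+\frac{\epsilon}{\lambda}.
\]

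Finally I would fix the constants. Taking $t_1:=\max\{1,\,2C_0/\rho\}$ forces $\lambda\ge t\rho/2$ for $t\ge t_1$, so the inhomogeneous term satisfies $\epsilon/\lambda\le(2/\rho)\,\epsilon/t\le(4/\rho)\,\epsilon(1+t)^{-1}$. For the boundary term, the function $t\mapsto(1+t)\,e^{-(t\rho-C_0)\delta}$ is continuous and tends to $0$ as $t\to\infty$ (an exponential beats the linear factor), hence is bounded on $[t_1,\infty)$ by a constant $C_2$ depending only on $\rho,C_0,\delta$; thus $\epsilon\,e^{-\lambda\delta}\le C_2\,\epsilon(1+t)^{-1}$. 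Setting $C_1:=4/\rho+C_2$ gives the claim, with $C_1$ and $t_1$ depending only on $C_0,\rho,\delta$. The one point that genuinely uses the restriction to $[\delta,1]$ — and is therefore the crux — is the boundary term $E(s)^{-1}f^t(0)$: at $s=0$ it is only $O(\epsilon)$ with no decay, and solely the uniform gap $s\ge\delta$ lets the factor $e^{-\lambda\delta}$ suppress it faster than any power of $t$. The inhomogeneous term, by contrast, already carries the sharp $1/t$ rate appearing in the conclusion, so no finer analysis is needed there.
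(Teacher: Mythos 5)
Your proof is correct and follows essentially the same route as the paper: both solve the ODE by the integrating factor $\exp\bigl(\int_0^s(t\alpha+\beta)\bigr)$, bound the boundary term by $\epsilon e^{-c t\delta}$ and the Duhamel term by $O(\epsilon/t)$ using $\Re\alpha>\rho$. The only cosmetic difference is that the paper bounds the factors $e^{\pm\widetilde\beta}$ by $e^{C_0}$ separately while you absorb $\Re\beta\geq -C_0$ into the decay rate $t\rho-C_0$; this changes nothing of substance.
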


\begin{proof}
We set $\alphatilde(s):=\int_0^s\alpha(u)\,du$
and $\betatilde(s):=\int_0^s\beta(u)\,du$.
We have
$\|\betatilde\|_{C^0([0,1])}\leq C_0$.
For any $0\leq s_1\leq s_2$,
we have
\[
\Re\alphatilde(s_2)
-\Re\alphatilde(s_1)
=\int_{s_1}^{s_2}\Re\alpha(u)\,du
>
\rho (s_2-s_1).
\]

We set
$g^t:=\del_sf^t+(t \alpha+\beta)f^t$.
Because
$\del_s\bigl(
e^{t\alphatilde+\betatilde}f^t
\bigr)
=e^{t\alphatilde+\betatilde}g^t$,
we obtain
\[
f^t=e^{-t\alphatilde(s)-\betatilde(s)}
\int_0^s
e^{t\alphatilde(u)+\betatilde(u)}g^t(u)\,du
+e^{-t\alphatilde(s)-\betatilde(s)}f^t(0).
\]
We have
$\|
e^{-t\alphatilde(s)-\betatilde(s)}f^t(0)
\|_{C^{0}([\delta,1])}
\leq \epsilon e^{-t\rho\delta+C_0}$.
We also have the following inequalities
for $s\in [0,1]$:
\[
\left|
e^{-t\alphatilde(s)-\betatilde(s)}
\int_0^s
e^{t\alphatilde(u)+\betatilde(u)}g^t(u)\,du
\right|
\leq
 \int_0^s
 e^{-t\rho(s-u)+2C_0}\epsilon\,du
\leq \frac{\epsilon e^{2C_0}}{\rho t}.
\]
Then, we obtain the claim of the lemma.
\end{proof}

\vspace{.1in}
Let us state a variant.
Suppose that
$\alpha_1\in C^0([0,1])$ satisfies
$\Re(\alpha_1(s))<-\rho$
for any $s\in[0,1]$.
Let $\beta_1\in C^0([0,1])$
such that $\|\beta_1\|_{C^0([0,1])}\leq C_0$.
Suppose that
$f^t_1\in C^1([0,1])$ $(t\geq 0)$ satisfies
\[
 \|f^t_1\|_{C^0([0,1])}
 +\|\del_sf_1^t+(t\alpha_1+\beta_1)f^t_1\|_{C^0([0,1])}
 \leq \epsilon.
\]

\begin{lemma}
\label{lem;21.3.5.4}
The following inequality holds for any $t\geq t_1$:
\[
 \|f^t_1\|_{C^0([0,1-\delta])}
 \leq
 C_1\epsilon(1+t)^{-1}.
\]
Here, $C_1$ and $t_1$ are positive constants
in Lemma {\rm\ref{lem;21.3.5.3}}.
\end{lemma}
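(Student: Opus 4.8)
The plan is to reduce Lemma~\ref{lem;21.3.5.4} directly to the already-established Lemma~\ref{lem;21.3.5.3} by reversing the orientation of the interval, i.e.\ substituting $s \mapsto 1-s$. This substitution flips the sign of the real-part condition (turning $\Re(\alpha_1) < -\rho$ into a condition of the type $\Re > \rho$) and exchanges the two endpoints, thereby converting the desired estimate on $[0,\,1-\delta]$ into precisely the estimate on $[\delta,\,1]$ that Lemma~\ref{lem;21.3.5.3} supplies.

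First I would set $g^t(s) := f^t_1(1-s)$ together with $\alphatilde(s) := -\alpha_1(1-s)$ and $\betatilde(s) := -\beta_1(1-s)$. The chain rule gives $\del_s g^t(s) = -(\del_s f^t_1)(1-s)$, and a short computation then shows
\[
\del_s g^t + (t\alphatilde + \betatilde)\, g^t
= -\bigl(\del_s f^t_1 + (t\alpha_1 + \beta_1) f^t_1\bigr)(1-s).
\]
Since $C^0$-norms on $[0,1]$ are invariant under $s \mapsto 1-s$, we have $\|g^t\|_{C^0([0,1])} = \|f^t_1\|_{C^0([0,1])}$, and the displayed expression has the same $C^0$-norm as $\del_s f^t_1 + (t\alpha_1 + \beta_1) f^t_1$. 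Hence $g^t$ satisfies the differential inequality of Lemma~\ref{lem;21.3.5.3} with the same $\epsilon$.

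Next I would verify the coefficient hypotheses. Because $\Re(\alpha_1(s)) < -\rho$ for all $s \in [0,1]$, one obtains $\Re(\alphatilde(s)) = -\Re(\alpha_1(1-s)) > \rho$ for all $s \in [0,1]$, as required; likewise $\|\betatilde\|_{C^0([0,1])} = \|\beta_1\|_{C^0([0,1])} \leq C_0$. Thus $\alphatilde$, $\betatilde$, $g^t$ meet all the hypotheses of Lemma~\ref{lem;21.3.5.3} with the very same constants $\rho$, $\epsilon$, $C_0$, $\delta$, so the conclusion of that lemma applies with the identical $C_1$ and $t_1$.

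Applying Lemma~\ref{lem;21.3.5.3} yields $\|g^t\|_{C^0([\delta,1])} \leq C_1 \epsilon (1+t)^{-1}$ for all $t \geq t_1$. Translating back, as $s$ runs over $[\delta,\,1]$ the point $1-s$ runs over $[0,\,1-\delta]$, whence $\|g^t\|_{C^0([\delta,1])} = \|f^t_1\|_{C^0([0,1-\delta])}$, which is exactly the asserted bound. I expect no genuine obstacle: the whole content is the symmetry $s \mapsto 1-s$, and the only point demanding care is the sign bookkeeping, so that the reversed real-part inequality lands on the correct side to invoke Lemma~\ref{lem;21.3.5.3}.
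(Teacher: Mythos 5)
Your proof is correct and is exactly the paper's argument: the paper's entire proof of Lemma \ref{lem;21.3.5.4} is the one-line remark that it suffices to apply Lemma \ref{lem;21.3.5.3} to $f^t_1(1-s)$, and you have simply carried out that substitution with the sign bookkeeping made explicit. The verification that the transformed coefficients satisfy the hypotheses with the same constants $\rho$, $C_0$, $\delta$ (hence the same $C_1$, $t_1$) is accurate.
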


\begin{proof}
It is enough to apply Lemma \ref{lem;21.3.5.3}
to the function $f^t_1(1-s)$.
\end{proof}

\subsection{A singular perturbation theory}
\label{subsection;21.3.7.22}

We recall some results from \cite[\S2.4]{Decouple}
with a complementary estimate
for the convenience of the reader.

\subsubsection{Notation}

Let $r$ be a positive integer.
Let $M_r(\cnum)$ denote the space of
$r\times r$ complex matrices.
Let $M_r(\cnum)_0\subset M_r(\cnum)$ denote
the subspace of diagonal matrices,
and let $M_r(\cnum)_1\subset M_r(\cnum)$
denote the subspace of off-diagonal matrices,
i.e.,
\[
 M_r(\cnum)_0=
 \bigl\{(a_{i,j})\in M_r(\cnum)\,\big|\,
a_{i,j}=0\,\,(i\neq j)
\bigr\},
\]
\[
M_r(\cnum)_1=
 \bigl\{(a_{i,j})\in M_r(\cnum)\,\big|\,
 a_{i,j}=0\,\,(i=j)
\bigr\}. 
\]

For any non-negative integer $\ell$,
let $C^{\ell}([s_1,s_2],M_r(\cnum))$ denote the space of
$M_r(\cnum)$-valued $C^{\ell}$-functions on $[s_1,s_2]$.
Similarly,
let $C^{\ell}([s_1,s_2],M_r(\cnum)_{\kappa})$ $(\kappa=0,1)$
denote the spaces of
$M_r(\cnum)_{\kappa}$-valued $C^{\ell}$-functions on $[s_1,s_2]$.
We set
$\|Y\|_{C^0([s_1,s_2])}:=\max_{i,j}\|Y_{i,j}\|_{C^0([s_1,s_2])}$
for any $Y\in C^0([s_1,s_2],M_r(\cnum))$.

\subsubsection{Gauge transformations}
\label{subsection;21.3.7.1}

Fix $C_0>0$.
Suppose that $a_j,b_j\in C^0([0,1])$ $(j=1,\ldots,r)$
satisfy the following conditions.
\begin{itemize}
 \item $\Re a_1(s)<\Re a_2(s)<\cdots<\Re a_r(s)$ for any $s\in[0,1]$.
 \item $\|b_j\|_{C^0([0,1])}\leq C_0$. 
\end{itemize}

For $t\geq 0$,
let $A^t(s)$ denote the $M_r(\cnum)_0$-valued function
whose $(i,i)$-entries are $ta_i(s)+b_i(s)$.
The following proposition is proved in
\cite[Proposition 2.18]{Decouple}.

\begin{proposition}
\label{prop;21.3.5.1}
There exist $C_1>0$ and $\epsilon_1>0$,
depending only on $C_0$,
 such that the following holds:
\begin{itemize}
 \item For any $t\geq 0$ and any $B\in C^0([0,1],M_r(\cnum)_1)$ satisfying
       $\|B\|_{C^0([0,1])}\leq \epsilon_1$,
       there exist $G^t\in C^1([0,1],M_r(\cnum)_1)$
       and $H^t\in C^0([0,1],M_r(\cnum)_0)$
       satisfying
\begin{equation}
       \|G^t\|_{C^0([0,1])}+\|\del_sG^t+[A^t,G^t]\|_{C^0([0,1])}
       +\|H^t\|_{C^0([0,1])}\leq C_1\|B\|_{C^0([0,1])},
\end{equation}
\begin{equation}
\label{eq;21.3.7.2}
       A^t+B=(I+G^t)^{-1}(A^t+H^t)(I+G_t)
       +(I+G^t)^{-1}\del_sG^t. 
\end{equation}
Here, $I\in M_r(\cnum)$ denote the identity matrix.
\hfill\qed
\end{itemize} 
\end{proposition}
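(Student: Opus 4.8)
The plan is to rewrite the prescribed identity \eqref{eq;21.3.7.2} as a fixed-point problem for the off-diagonal matrix $G^t$ and then solve it by a contraction argument whose only $t$-uniform ingredient is supplied by Lemmas \ref{lem;21.3.5.3} and \ref{lem;21.3.5.4}. Multiplying \eqref{eq;21.3.7.2} on the left by $I+G^t$, subtracting $A^t$, and using that $A^t$ is diagonal while $G^t$ is off-diagonal, one finds that both $\del_sG^t$ and $[A^t,G^t]$ are off-diagonal. Splitting the resulting identity into its $M_r(\cnum)_0$- and $M_r(\cnum)_1$-parts, the diagonal part forces
\[
H^t=(G^tB)_0,
\]
the diagonal part of $G^tB$, while the off-diagonal part becomes
\[
\del_sG^t+[A^t,G^t]=B+(G^tB)_1-(G^tB)_0\,G^t,
\]
where $(\,\cdot\,)_0$ and $(\,\cdot\,)_1$ denote the projections onto $M_r(\cnum)_0$ and $M_r(\cnum)_1$. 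Writing $\mathcal{R}(G):=B+(GB)_1-(GB)_0\,G$, it therefore suffices to solve $\del_sG^t+[A^t,G^t]=\mathcal{R}(G^t)$ for $G^t$ and then to \emph{define} $H^t:=(G^tB)_0$.

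First I would construct a right inverse $\mathcal{K}^t$ of the linear operator $\mathcal{L}^t(G):=\del_sG+[A^t,G]$ on off-diagonal matrix functions, with operator norm bounded uniformly in $t\geq0$. This is done entrywise: for $i\neq j$ the $(i,j)$-entry of $\mathcal{L}^t(G)$ is the scalar operator $\del_s+c^t_{ij}$ with $c^t_{ij}=t(a_i-a_j)+(b_i-b_j)$. The ordering hypothesis $\Re a_1<\cdots<\Re a_r$ gives $\Re(a_i-a_j)>0$ for $i>j$ and $\Re(a_i-a_j)<0$ for $i<j$, with a uniform gap on the compact interval $[0,1]$. Accordingly I would invert $\del_s+c^t_{ij}$ by forward integration from $s=0$ when $i>j$ and by backward integration from $s=1$ when $i<j$; in either case the integrating-factor kernel $\exp(-\int_u^sc^t_{ij})$ has modulus bounded by a constant depending only on $C_0$, because the $t$-dependent part of the exponent has the favorable sign (so contributes a factor $\leq1$ for every $t\geq0$, in particular with no degeneration as $t\to\infty$) while the $(b_i-b_j)$-part is controlled by $C_0$ (covering $t=0$). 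This is exactly the computation packaged in Lemmas \ref{lem;21.3.5.3} and \ref{lem;21.3.5.4}, which additionally give the interior decay $\|G^t\|_{C^0}=O\bigl((1+t)^{-1}\bigr)$ used later for Proposition \ref{prop;21.3.5.20}. One thus obtains $\|\mathcal{K}^t(R)\|_{C^0}\leq C(C_0)\,\|R\|_{C^0}$ uniformly in $t$, together with $C^1$-regularity of $\mathcal{K}^t(R)$.

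With $\mathcal{K}^t$ in hand, a solution of the off-diagonal equation is a fixed point of $\mathcal{F}(G):=\mathcal{K}^t(\mathcal{R}(G))$ on the Banach space $C^0([0,1],M_r(\cnum)_1)$, since $\mathcal{L}^t\circ\mathcal{K}^t=\mathrm{id}$. As $\mathcal{R}(G)=B+(GB)_1-(GB)_0\,G$ consists of a constant, a linear, and a quadratic term in $G$ whose coefficients are controlled by $\|B\|_{C^0}$, the uniform bound on $\mathcal{K}^t$ yields $\|\mathcal{F}(G)\|_{C^0}\leq C(C_0)\|B\|_{C^0}\bigl(1+O(\|G\|_{C^0})\bigr)$ and a Lipschitz estimate $\|\mathcal{F}(G_1)-\mathcal{F}(G_2)\|_{C^0}\leq C(C_0)\|B\|_{C^0}\,O\bigl(1+\|G_1\|_{C^0}+\|G_2\|_{C^0}\bigr)\,\|G_1-G_2\|_{C^0}$. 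Choosing $\epsilon_1>0$ small enough (depending only on $C_0$, with $r$ fixed throughout) so that $\mathcal{F}$ maps the ball $\{\|G\|_{C^0}\leq 2C(C_0)\|B\|_{C^0}\}$ into itself and contracts it, the Banach fixed-point theorem produces a unique $G^t\in C^1$ with $\|G^t\|_{C^0}=O(\|B\|_{C^0})$. Then $\|\del_sG^t+[A^t,G^t]\|_{C^0}=\|\mathcal{R}(G^t)\|_{C^0}=O(\|B\|_{C^0})$ and $\|H^t\|_{C^0}=\|(G^tB)_0\|_{C^0}=O(\|B\|_{C^0}^2)$, which combine to the asserted estimate with $C_1=C_1(C_0)$; smallness of $\|G^t\|_{C^0}$ also makes $I+G^t$ invertible, so \eqref{eq;21.3.7.2} holds.

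The main obstacle is precisely the $t$-uniform construction of $\mathcal{K}^t$: one must select the direction of integration entry by entry so that the exponentially growing mode is always suppressed, and then check that the kernel bound degenerates neither as $t\to\infty$ (handled by the favorable sign of the $t$-term, which is where the strict ordering of the $\Re a_i$ enters) nor at $t=0$ (handled by the $C_0$-bound on the $b_j$). Once this uniform right inverse and the estimates of Lemmas \ref{lem;21.3.5.3}--\ref{lem;21.3.5.4} are in place, the remaining contraction-mapping step and the bookkeeping of the diagonal/off-diagonal split are routine.
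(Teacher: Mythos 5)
The paper itself contains no proof of this proposition: it is quoted from \cite[Proposition 2.18]{Decouple}, so there is no in-text argument to compare against. Your proof is correct and self-contained. The algebraic reduction of \eqref{eq;21.3.7.2} to the pair $H^t=(G^tB)_0$ and $\del_sG^t+[A^t,G^t]=B+(G^tB)_1-(G^tB)_0G^t$ is exactly right, and you handle the one genuinely delicate point correctly: the entrywise right inverse of $\del_s+c^t_{ij}$ must be built by integrating forward from $s=0$ when $\Re(a_i-a_j)>0$ and backward from $s=1$ when $\Re(a_i-a_j)<0$, so that the $t$-dependent part of the integrating factor always has modulus at most $1$ and only the $b$-part, bounded by $C_0$, survives; in particular you rightly note that only the \emph{sign} of $\Re(a_i-a_j)$, and not a quantitative gap, is needed at this stage, which is why $C_1$ and $\epsilon_1$ depend only on $C_0$ (the gap $C_2$ enters only in the later decay estimate of Lemma \ref{lem;21.3.5.11}). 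The concluding contraction on a ball of radius comparable to $\|B\|_{C^0([0,1])}$, the resulting $C^1$-regularity of $G^t$ from the ODE, and the bound $\|H^t\|_{C^0}=O(\|B\|_{C^0}^2)$ are all routine and correct. This is essentially the standard singular-perturbation argument of the cited reference.
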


\begin{remark}
In Proposition {\rm\ref{prop;21.3.5.1}},
we assume that $C_1\epsilon_1$ is sufficiently small
so that $I+G^t$ is invertible.
\hfill\qed 
\end{remark}

Let us add a complementary estimate to Proposition \ref{prop;21.3.5.1}.
There exist $C_2>0$ and $C_3>0$ such that
(i) $\Re(a_{i+1}(s)-a_i(s))>C_2$
for any $s\in[0,1]$ and $i=1,\ldots,r-1$,
(ii) $\|a_i\|_{C^0([0,1])}\leq C_3$ for any $i$.
Take $0\,<\,\delta\,<\,\tfrac{1}{2}$.

\begin{lemma}
\label{lem;21.3.5.11}
 There exist  $C_4\,>\,0$ and $t_4\,>\,0$,
depending only on $C_0$, $C_2$, $C_3$ and $\delta$,
such that the following holds on $[\delta,1-\delta]$
for $t\geq t_4$:
\begin{itemize}
 \item Let $G^t$ and $H^t$ be as in Proposition {\rm\ref{prop;21.3.5.1}}.
       Then, we have
\[
       \bigl\|G^t\bigr\|_{C^0([\delta,1-\delta])}
       +\bigl\|H^t\bigr\|_{C^0([\delta,1-\delta])}
       \leq C_4 (1+t)^{-1}\|B\|_{C^0([0,1])}.
\]
\end{itemize} 
\end{lemma}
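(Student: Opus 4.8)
The plan is to extract scalar ordinary differential equations for the individual entries of $G^t$ from the gauge relation \eqref{eq;21.3.7.2} and then apply the interior estimates of Lemma \ref{lem;21.3.5.3} and Lemma \ref{lem;21.3.5.4} entry by entry. First I would clear the inverse in \eqref{eq;21.3.7.2}: multiplying on the left by $I+G^t$ (invertible by the remark following Proposition \ref{prop;21.3.5.1}) and cancelling the common term $A^t$ yields the matrix identity
$$\partial_s G^t + [A^t,G^t] = B + G^t B - H^t - H^t G^t.$$
Since $A^t$ and $H^t$ are diagonal while $G^t$ and $B$ are off-diagonal, the left-hand side together with $B$ and $H^t G^t$ is purely off-diagonal, whereas $G^t B$ contributes to both parts. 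Comparing diagonal parts gives $H^t=(G^tB)_{\mathrm{diag}}$, i.e. $H^t_{ii}=\sum_{k\neq i}G^t_{ik}B_{ki}$, while comparing the $(i,j)$ off-diagonal entries ($i\neq j$) and using $[A^t,G^t]_{ij}=(A^t_{ii}-A^t_{jj})G^t_{ij}$ gives
$$\partial_s G^t_{ij} + \bigl(t(a_i-a_j)+(b_i-b_j)+H^t_{ii}\bigr)G^t_{ij} = B_{ij}+\sum_{k\neq i,j}G^t_{ik}B_{kj}.$$

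Next I would verify that each of these scalar equations is exactly of the form treated in Lemma \ref{lem;21.3.5.3}, with $\alpha=a_i-a_j$, $\beta=(b_i-b_j)+H^t_{ii}$, $f^t=G^t_{ij}$, and forcing term $R_{ij}:=B_{ij}+\sum_{k\neq i,j}G^t_{ik}B_{kj}$. Here the gap hypothesis is decisive: for $i>j$ one has $\Re(a_i-a_j)=\sum_{l=j}^{i-1}\Re(a_{l+1}-a_l)>C_2$, so $\alpha$ satisfies the hypothesis of Lemma \ref{lem;21.3.5.3} with $\rho=C_2$, whereas for $i<j$ one has $\Re(a_i-a_j)<-C_2$ and Lemma \ref{lem;21.3.5.4} applies with $\rho=C_2$. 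The coefficient $\beta$ and the forcing $R_{ij}$ are uniformly bounded in $t$: Proposition \ref{prop;21.3.5.1} already furnishes the crude global bounds $\|G^t\|_{C^0([0,1])}+\|H^t\|_{C^0([0,1])}\leq C_1\|B\|_{C^0([0,1])}\leq C_1\epsilon_1$, whence $\|\beta\|_{C^0([0,1])}\leq 2C_0+C_1\epsilon_1$ and $\|R_{ij}\|_{C^0([0,1])}\leq (1+(r-2)C_1\epsilon_1)\|B\|_{C^0([0,1])}$. Consequently $\|G^t_{ij}\|_{C^0([0,1])}+\|\partial_s G^t_{ij}+(t\alpha+\beta)G^t_{ij}\|_{C^0([0,1])}=\|G^t_{ij}\|_{C^0([0,1])}+\|R_{ij}\|_{C^0([0,1])}\leq \epsilon$ with $\epsilon:=C^*\|B\|_{C^0([0,1])}$ for a constant $C^*$ depending only on $C_0$ and $r$.

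Then I would apply Lemma \ref{lem;21.3.5.3} to the entries with $i>j$ (giving decay on $[\delta,1]\supset[\delta,1-\delta]$) and Lemma \ref{lem;21.3.5.4} to those with $i<j$ (decay on $[0,1-\delta]\supset[\delta,1-\delta]$). For $t$ beyond the common threshold supplied by those lemmas this produces $\|G^t_{ij}\|_{C^0([\delta,1-\delta])}\leq C_4'(1+t)^{-1}\|B\|_{C^0([0,1])}$, with $C_4'$ and the threshold depending only on $C_0,C_2,\delta$ and $r$. Summing over the finitely many off-diagonal entries controls $\|G^t\|_{C^0([\delta,1-\delta])}$, and substituting this interior bound into $H^t_{ii}=\sum_{k\neq i}G^t_{ik}B_{ki}$ gives $\|H^t\|_{C^0([\delta,1-\delta])}\leq (r-1)\|G^t\|_{C^0([\delta,1-\delta])}\|B\|_{C^0([0,1])}$, which is again $O\bigl((1+t)^{-1}\|B\|_{C^0([0,1])}\bigr)$; adding the two yields the asserted estimate.

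The one genuinely delicate point is the apparent circularity: the equation for $G^t_{ij}$ has both a coefficient $\beta$ and a forcing $R_{ij}$ that themselves involve $H^t$ and the remaining entries of $G^t$. I expect this to be the thing to get right, but it is not a true obstacle, since I never need the \emph{improved} interior bound on the right-hand side. Proposition \ref{prop;21.3.5.1} already supplies the crude $O(\|B\|)$ bounds globally, and these enter only as uniformly bounded data (bounded $\beta$, and $R_{ij}=O(\|B\|)$); a single pass through Lemma \ref{lem;21.3.5.3} and Lemma \ref{lem;21.3.5.4} then upgrades the $O(1)$ bound to $O(1/t)$ on the interior, so no bootstrap is required. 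I note in passing that the upper bound $C_3$ on the $\|a_i\|_{C^0([0,1])}$ plays no role in this argument: only the spectral gap $C_2$ and the bound $C_0$ are used, the interior decay being governed entirely by the lower bound on $\pm\Re(a_i-a_j)$.
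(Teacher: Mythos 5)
Your proof is correct. For the interior decay of $G^t$ you and the paper do essentially the same thing: use the gap $\Re(a_i-a_j)$ of definite sign bounded away from $0$ by $C_2$ to feed each off-diagonal entry $G^t_{ij}$ into Lemma \ref{lem;21.3.5.3} (for $i>j$) or Lemma \ref{lem;21.3.5.4} (for $i<j$) and intersect the two interior intervals; the only cosmetic difference is that the paper reads the required bound $\|\del_s G^t_{ij}+(t(a_i-a_j)+b_i-b_j)G^t_{ij}\|\leq C_1\|B\|$ directly off the estimate supplied by Proposition \ref{prop;21.3.5.1}, whereas you re-derive the exact scalar ODE from \eqref{eq;21.3.7.2} and absorb $H^t_{ii}$ into $\beta$ and the quadratic terms into the forcing --- both are legitimate, and your worry about circularity is correctly resolved since only the crude global $O(\|B\|)$ bounds enter the hypotheses of the scalar lemmas. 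Where you genuinely diverge is the estimate of $H^t$: the paper first proves an interior bound $\|\del_s G^t\|_{C^0([\delta,1-\delta])}\leq C_{11}\|B\|$ (this is the step that uses $C_3$, since one must control $t(a_i-a_j)G^t_{ij}$ by $tC_3\cdot(1+t)^{-1}$) and then extracts the decay of $H^t$ from the conjugated identity \eqref{eq;21.3.5.2}, exploiting that $B$, $G^t$ and $\del_s G^t$ are off-diagonal so the diagonal part only sees products of two small/bounded off-diagonal factors; you instead take the diagonal part of the cleared gauge relation once and for all to get the exact formula $H^t=(G^tB)_{\mathrm{diag}}$, from which the $(1+t)^{-1}$ decay of $H^t$ on $[\delta,1-\delta]$ is immediate from that of $G^t$. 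Your route is shorter, avoids the interior derivative estimate altogether, and --- as you observe --- shows the constants need not depend on $C_3$ (a strictly stronger conclusion than stated, which is harmless); the paper's route has the mild advantage of not requiring one to manipulate \eqref{eq;21.3.7.2} into the explicit coupled system, at the cost of the extra intermediate bound.
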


\begin{proof}
Note that $G^t_{i,i}=0$ for any $i$.
For $i\neq j$,
we have
\begin{equation}
\label{eq;21.3.5.10}
\bigl\|G^t_{i,j}\bigr\|_{C^0([0,1])}+
\bigl\|
 \del_sG^t_{i,j}
+\bigl(t(a_i-a_j)+b_i-b_j\bigr)G^t_{i,j}
\bigr\|_{C^0([0,1])}
\leq C_1\|B\|_{C^0([0,1])}.
\end{equation}
By Lemma \ref{lem;21.3.5.3} and Lemma \ref{lem;21.3.5.4},
there exist $C_{10}>0$ and $t_{10}>0$,
depending only on $C_0$, $C_2$ and $\delta$
such that the following holds for any $t\geq t_{10}$:
\begin{equation}
\label{eq;21.3.5.11}
 \|G^t\|_{C^0([\delta,1-\delta])}
\leq \frac{C_{10}}{1+t}\|B\|_{C^0([0,1])}.
\end{equation}
By (\ref{eq;21.3.5.10}) and (\ref{eq;21.3.5.11}),
there exist $C_{11}>0$,
depending only on $C_0$, $C_2$, $C_3$ and $\delta$
such that the following holds for any $t\geq t_{10}$:
\begin{equation}
\label{eq;21.3.5.12}
 \|\del_sG^t\|_{C^0([\delta,1-\delta])}
\leq C_{11}\|B\|_{C^0([0,1])}.
\end{equation}
By (\ref{eq;21.3.7.2}),
we have 
\begin{equation}
\label{eq;21.3.5.2}
 (I+G^t)(A^t+B)(I+G^t)^{-1}
=A^t+H^t+\del_s(G^t)\cdot (I+G_t)^{-1}.
\end{equation}
Note that the diagonal entries of 
$B$, $G$ and $\del_sG^t$ are $0$.
By (\ref{eq;21.3.5.11}), (\ref{eq;21.3.5.12})
and (\ref{eq;21.3.5.2}),
there exist $C_{12}>0$,
depending only on $C_0$, $C_2$, $C_3$ and $\delta$
such that the following holds for any $t\geq t_{10}$:
\[
 \|H^t\|_{C^0([\delta,1-\delta])}
 \leq C_{12}(1+t)^{-1}\|B\|_{C^0([0,1])}.
\]
Thus, we obtain the claim of the lemma.
\end{proof}

\subsubsection{Reformulation}

Let us recall the reformulation of
Proposition \ref{prop;21.3.5.1}
with a complementary estimate,
as in \cite[Corollary 2.19]{Decouple}.
Let $A^t$, $C_i$ $(i=0,1,2,3)$ and $\epsilon_1$ be as in
\S\ref{subsection;21.3.7.1}.
Let $E$ be a $C^1$-vector bundle on $[0,1]$
with a frame $\vecv=(v_1,\ldots,v_r)$.
 Let $B\in C^0([0,1],M_r(\cnum)_1)$
 satisfying
 $\|B\|_{C^0([0,1])}\leq \epsilon_1$.
For $t\geq 0$,
let $\nabla^t$ denote the connection of $E$
determined by
$\nabla^t\vecv=\vecv\cdot (A^t+B)\,ds$.
We obtain the following corollary
from Proposition \ref{prop;21.3.5.1}
and Lemma \ref{lem;21.3.5.11}.

\begin{corollary}
\label{cor;21.3.5.12}
There exist matrix valued functions
$G^t\in C^1([0,1],M_r(\cnum)_1)$
and $H^t\in C^0([0,1],M_r(\cnum)_0)$
such that the following holds.
\begin{itemize}
 \item
      $\|G^t\|_{C^0([0,1])}+\|\del_sG^t+[A^t,G^t]\|_{C^0([0,1])}
   +\|H^t\|_{C^0([0,1])}\leq C_1\|B\|_{C^0([0,1])}$.
 \item
       For the frame
 $\vecu^t=\vecv\cdot (I+G^t)^{-1}$,
 we have
$\nabla^t\vecu^t=\vecu^t\cdot (A^t+H^t)\,ds$.
\end{itemize}
Moreover, there exist $C_{20}>0$ and $t_{20}>0$
depending only on $C_i$ $(i=0,2,3)$
such that the following holds for any $t\geq t_{20}$:
\[
 \|G^t\|_{C^0([1/4,3/4])}
 +\|H^t\|_{C^0([1/4,3/4])}
 \leq \frac{C_{20}}{1+t}\|B\|_{C^0([0,1])}
\] 
\hfill\qed
\end{corollary}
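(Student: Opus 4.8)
The plan is to deduce Corollary~\ref{cor;21.3.5.12} directly from Proposition~\ref{prop;21.3.5.1} and Lemma~\ref{lem;21.3.5.11} by a single change-of-frame computation, with no new analytic input required. First I would apply Proposition~\ref{prop;21.3.5.1} to the given $B\in C^0([0,1],M_r(\cnum)_1)$ with $\|B\|_{C^0([0,1])}\leq\epsilon_1$, producing $G^t\in C^1([0,1],M_r(\cnum)_1)$ and $H^t\in C^0([0,1],M_r(\cnum)_0)$ that satisfy both the algebraic identity~\eqref{eq;21.3.7.2} and the bound
\[
\|G^t\|_{C^0([0,1])}+\|\del_sG^t+[A^t,G^t]\|_{C^0([0,1])}+\|H^t\|_{C^0([0,1])}\leq C_1\|B\|_{C^0([0,1])}.
\]
These are precisely the functions asserted to exist, and the displayed estimate is the first bullet verbatim.

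For the second bullet I would use the standard transformation law for connection matrices: if $\nabla^t\vecv=\vecv\cdot\Omega$ with $\Omega=(A^t+B)\,ds$ and $\vecu^t=\vecv\cdot g$ for an invertible $g\in C^1$, then the Leibniz rule gives $\nabla^t\vecu^t=\vecu^t\cdot\bigl(g^{-1}\Omega g+g^{-1}\,dg\bigr)$. Here $g=(I+G^t)^{-1}$, and differentiating $(I+G^t)^{-1}(I+G^t)=I$ yields $d\bigl((I+G^t)^{-1}\bigr)=-(I+G^t)^{-1}(\del_sG^t)(I+G^t)^{-1}\,ds$, so that $g^{-1}\,dg=-(\del_sG^t)(I+G^t)^{-1}\,ds$. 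The new connection matrix is therefore
\[
\Bigl[(I+G^t)(A^t+B)(I+G^t)^{-1}-(\del_sG^t)(I+G^t)^{-1}\Bigr]ds.
\]
Substituting the rearranged gauge identity~\eqref{eq;21.3.5.2}, namely $(I+G^t)(A^t+B)(I+G^t)^{-1}=A^t+H^t+(\del_sG^t)(I+G^t)^{-1}$, makes the two $(\del_sG^t)(I+G^t)^{-1}$ terms cancel exactly, leaving $(A^t+H^t)\,ds$. Hence $\nabla^t\vecu^t=\vecu^t\cdot(A^t+H^t)\,ds$, as claimed.

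Finally, the complementary estimate is nothing other than Lemma~\ref{lem;21.3.5.11} specialized to $\delta=\tfrac14$, for then $[\delta,1-\delta]=[1/4,3/4]$ and the constants $C_4,t_4$ supplied there serve as the desired $C_{20},t_{20}$; since $\delta$ is now fixed, these depend only on $C_0,C_2,C_3$, as required. The only place calling for care in the entire argument is the bookkeeping in the change-of-frame step---getting the direction of conjugation right (it is $(I+G^t)(\,\cdot\,)(I+G^t)^{-1}$, not its inverse, because $g=(I+G^t)^{-1}$) and the sign of the term $g^{-1}\,dg$---but once~\eqref{eq;21.3.5.2} is invoked the cancellation is exact, so there is no genuine obstacle here and the corollary is a clean reformulation.
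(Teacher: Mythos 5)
Your proposal is correct and matches the paper's intent exactly: the paper offers no written proof beyond asserting that the corollary follows from Proposition~\ref{prop;21.3.5.1} and Lemma~\ref{lem;21.3.5.11}, and your argument supplies precisely that derivation --- the first bullet is the estimate of the proposition verbatim, the second bullet is the standard gauge-transformation computation in which the rearranged identity~\eqref{eq;21.3.5.2} cancels the $(\del_sG^t)(I+G^t)^{-1}$ term, and the final estimate is Lemma~\ref{lem;21.3.5.11} with $\delta=\tfrac14$ fixed so that $C_4,t_4$ depend only on $C_0,C_2,C_3$.
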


For each $t$,
$\nabla^t$ induces an isomorphism
$\Psi^t:E_{|s=1/4}\simeq E_{|s=3/4}$.
It is represented by
the diagonal matrix with respect to
the bases
$\vecu^t_{|s=1/4}$ and $\vecu^t_{|s=3/4}$,
whose $(j,j)$-entries are
\[
 \exp\Bigl(
 -\int_{1/4}^{3/4}\bigl(ta_j(s)+b_j(s)+H_{jj}^t(s)\bigr)\,ds
 \Bigr).
\]

\subsection{Proof of Proposition \ref{prop;21.3.5.20}}
\label{subsection;21.3.7.20}

Let us return to the setting in \S\ref{subsection;21.3.7.30}.
We extend $\gamma$ to a $C^{\infty}$-map
$\gammatilde:[-1,2]\lrarr X$
such that
$\Re\gammatilde^{\ast}\omega(\del_u)<0$
at any point of $[-1,2]$.
There exists a $C^{\infty}$-frame $v_{\pm}$
of $\gammatilde^{\ast}V_{\pm}$.
We have
$\gammatilde^{\ast}(\Phi)(v_{\pm})
=\pm\gammatilde^{\ast}(\omega) v_{\pm}$.
We obtain a $C^{\infty}$-map
$\nbigb:[-1,2]\lrarr M_2(\cnum)$
determined by
\[
\gammatilde^{\ast}(\nabla)(v_+,v_-)
=(v_+,v_-)\cdot\nbigb\,du.
\]
We have
$\gammatilde^{\ast}(\nabla_{V_{+}})v_+
=\nbigb_{11}v_+\,du$
and
$\gammatilde^{\ast}(\nabla_{V_{-}})v_-
=\nbigb_{22}v_-\,du$.

We obtain $\alpha\in C^{\infty}([-1,2])$ by
$\gammatilde^{\ast}\omega=\alpha\,du$.
We have $\Re(\alpha)<0$
at any point of $[-1,2]$.
Let 
$\nbiga:[-1,2]\lrarr M_2(\cnum)_0$
be the $C^{\infty}$-map determined by
$\nbiga_{11}=\alpha$ and $\nbiga_{22}=-\alpha$.
We have
\[
 \gammatilde^{\ast}(\nabla^t)(v_+,v_-)
 =(v_+,v_-)\cdot\Bigl(
  t\nbiga+\nbigb
 \Bigr)\,du.
\]
There exists $C_0>0$ such that
$\|\nbigb_{j,j}\|_{C^0([-1,2])}\leq C_0$ for $j=1,2$.
Let $C_1$ and $\epsilon_1$ be positive constants
as in Proposition \ref{prop;21.3.5.1}, depending on $C_0$.
There exists a positive integer $N>10$
such that
\[
 \|\nbigb_{1,2}\|_{C^0([-1,2])}
 +\|\nbigb_{2,1}\|_{C^0([-1,2])}
 \leq \frac{N}{10}\epsilon_1.
\]
We set $u(i):=\frac{i}{N}$ for $i=-N,\ldots,2N$.
We obtain the decomposition
$[-1,2]=\bigcup_{i=-N}^{2N-1}[u(i),u(i+1)]$.
Let 
$\Pi^t_i:\gammatilde^{\ast}(V)_{|u(i)}
\simeq \gammatilde^{\ast}(V)_{|u(i+1)}$
denote the isomorphisms
obtained as the parallel transport of $\gammatilde^{\ast}\nabla^t$.

\begin{lemma}
\label{lem;21.3.5.12}
 There exist constants $C_{30}>0$ and $t_{30}>0$,
a family of
$2\times 2$-matrices
$G^{t}_{i,0},G^t_{i,1}
 \in M_2(\cnum)_1$
 for $t\geq 0$ and $-N\leq i\leq 2N-1$,
 and families of continuous functions
 $H^t_{i,+},H^t_{i,-}\in
  C^0([u(i),u(i+1)])$ for $-N\leq i\leq 2N-1$,
 such that the following holds.
\begin{itemize}
 \item $C_{30}(1+t_{30})^{-1}\leq 1/10$.
 \item $|G^t_{i,0}|+|G^t_{i,1}|\leq C_{30}(1+t)^{-1}$
       for any $t\geq t_{30}$.
       Note that we obtain the bases
       $(v_{+},v_-)_{|u(i)}(I+G^t_{i,0})^{-1}$
       and $(v_+,v_-)_{|u(i+1)}(I+G^t_{i,1})^{-1}$
       of
       $\gammatilde^{\ast}(V)_{|u(i)}$
       and
       $\gammatilde^{\ast}(V)_{|u(i+1)}$,
       respectively.
 \item $\|H^t_{i,\pm}\|_{C^0([u(i),u(i+1)])}\leq C_{30}(1+t)^{-1}$
       for any $t\geq t_{30}$.
 \item For each $(i,t)$, $\Pi^t_i$ is represented by
       a diagonal matrix $\nbigc^t_i$
       with respect to the bases
       $(v_{+},v_-)_{|u(i)}(I+G^t_{i,0})^{-1}$
       and $(v_+,v_-)_{|u(i+1)}(I+G^t_{i,1})^{-1}$.
       Moreover, we obtain
\[
       (\nbigc^t_i)_{1,1}
       =\exp\left(-\int_{u(i)}^{u(i+1)}
       (t\alpha
       +\nbigb_{1,1}+H^t_{i,+})\,du
       \right),
\]
\[
       (\nbigc^t_i)_{2,2}
       =\exp\left(-\int_{u(i)}^{u(i+1)}
       (-t\alpha
       +\nbigb_{2,2}+H^t_{i,-})\,du
       \right).       
\]       
\end{itemize}
\end{lemma}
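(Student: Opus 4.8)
The plan is to produce the matrices $G^t_{i,0},G^t_{i,1}$ and the functions $H^t_{i,\pm}$ by applying Corollary~\ref{cor;21.3.5.12} on each subinterval, after rescaling it to $[0,1]$. The one delicate point is that Corollary~\ref{cor;21.3.5.12} controls $G^t$ and $H^t$ by $C(1+t)^{-1}$ only on the central part $[1/4,3/4]$, whereas the lemma demands such a bound precisely at the two endpoints $u(i),u(i+1)$. To reach the endpoints I would not rescale $[u(i),u(i+1)]$ itself, but the thrice longer interval $[u(i-1),u(i+2)]$, which lies inside the domain $[-1,2]$ of $\gammatilde$ for every index relevant to the concatenation along $\gamma$ (enlarging the extension of $\gamma$ by one more mesh length on each side covers the remaining extreme indices). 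Under the affine rescaling of $[u(i-1),u(i+2)]$ onto $[0,1]$ the points $u(i)$ and $u(i+1)$ are sent to $s=1/3$ and $s=2/3$, both of which lie in $[1/4,3/4]$; this is exactly the purpose of lengthening the path.

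Writing $u=u(i-1)+\tfrac{3}{N}s$, the connection $\gammatilde^\ast\nabla^t$ becomes $\vecv\cdot(A^t+B)\,ds$ with diagonal part $A^t=\tfrac{3}{N}(t\nbiga+\nbigb^{(0)})$ and off-diagonal part $B=\tfrac{3}{N}\nbigb^{(1)}$, where $\nbigb^{(0)},\nbigb^{(1)}$ are the diagonal and off-diagonal parts of $\nbigb$. In the notation of \S\ref{subsection;21.3.7.1} this means $a_1=\tfrac{3}{N}\alpha$, $a_2=-\tfrac{3}{N}\alpha$ and $b_j=\tfrac{3}{N}\nbigb_{jj}$. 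Since $\Re(\alpha)<0$ everywhere on the compact set $[-1,2]$, there is a uniform $\rho_0>0$ with $|\Re\alpha|\geq\rho_0$, whence $\Re a_1<0<\Re a_2$ and $\Re(a_2-a_1)\geq\tfrac{6\rho_0}{N}=:C_2$, while $\|a_j\|_{C^0}\leq C_3$ and $\|b_j\|_{C^0}\leq C_0$ with $C_3,C_0$ independent of $i$. The off-diagonal part obeys $\|B\|_{C^0([0,1])}\leq\tfrac{3}{N}\|\nbigb^{(1)}\|_{C^0([-1,2])}\leq\tfrac{3}{10}\epsilon_1<\epsilon_1$ by the choice of $N$, so the hypotheses of Corollary~\ref{cor;21.3.5.12} hold with constants independent of $i$ and $t$.

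Corollary~\ref{cor;21.3.5.12} then supplies $G^t\in C^1([0,1],M_2(\cnum)_1)$ and $H^t\in C^0([0,1],M_2(\cnum)_0)$, the frame $\vecu^t=\vecv\,(I+G^t)^{-1}$ in which $\nabla^t\vecu^t=\vecu^t(A^t+H^t)\,ds$, and the interior estimate $\|G^t\|_{C^0([1/4,3/4])}+\|H^t\|_{C^0([1/4,3/4])}\leq C_{20}(1+t)^{-1}\|B\|$ for $t\geq t_{20}$. I would set $G^t_{i,0}:=G^t(1/3)$, $G^t_{i,1}:=G^t(2/3)$, and let $H^t_{i,+},H^t_{i,-}$ be the functions on $[u(i),u(i+1)]$ obtained from $H^t_{11},H^t_{22}$ by the change of variables, including the Jacobian factor $N/3$. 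Because $1/3,2/3\in[1/4,3/4]$, the interior estimate yields $|G^t_{i,0}|+|G^t_{i,1}|\leq C_{30}(1+t)^{-1}$ and $\|H^t_{i,\pm}\|_{C^0}\leq C_{30}(1+t)^{-1}$ for a uniform $C_{30}$, whereupon $t_{30}$ is chosen so large that $C_{30}(1+t_{30})^{-1}\leq 1/10$. The transport $\Pi^t_i$ from $u(i)$ to $u(i+1)$ is the transport from $s=1/3$ to $s=2/3$ of the diagonal connection $A^t+H^t$ in the frame $\vecu^t$, so it is diagonal in the asserted bases; the elementary substitution $\int_{1/3}^{2/3}(\cdots)\,ds=\int_{u(i)}^{u(i+1)}(\cdots)\,du$ (the factor $\tfrac3N$ cancelling the Jacobian) converts the entries $\exp\!\big(-\int_{1/3}^{2/3}(ta_j+b_j+H^t_{jj})\,ds\big)$ into exactly the claimed formulas for $(\nbigc^t_i)_{1,1}$ and $(\nbigc^t_i)_{2,2}$.

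The genuine obstacle is the one isolated above: the gauge $G^t$ decouples the system only away from the endpoints, because near $s=0$ the growing exponential mode and near $s=1$ the decaying one preclude any $t$-uniform decay there, which is intrinsic to the singular perturbation and is why Lemma~\ref{lem;21.3.5.11} is stated on $[\delta,1-\delta]$. Embedding each subinterval as the middle third of a longer one, made possible by the extension of $\gamma$ to $\gammatilde$ on $[-1,2]$, is precisely the device that upgrades the interior estimate to the required endpoint estimate.
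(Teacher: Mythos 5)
Your proposal is correct and follows essentially the same route as the paper: the paper's proof also rescales an enlarged interval onto $[0,1]$ via the affine map $F_i(s)=\tfrac{1}{2N}(2i-1+4s)$, chosen so that $[u(i),u(i+1)]$ lands exactly on $[1/4,3/4]$, and then invokes Corollary~\ref{cor;21.3.5.12} together with the interior estimate of Lemma~\ref{lem;21.3.5.11}. Your only deviation is cosmetic --- you enlarge by a factor of $3$ so that $[u(i),u(i+1)]$ maps to $[1/3,2/3]\subset[1/4,3/4]$ instead of onto $[1/4,3/4]$ itself --- and your identification of the endpoint difficulty as the reason for the enlargement, as well as the Jacobian factor $N/3$ in the definition of $H^t_{i,\pm}$, are exactly right.
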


\begin{proof}
Let $F_i:[0,1]\simeq [\frac{2i-1}{2N},\frac{2i+3}{2N}]$
be the affine isomorphism given by
$F_i(s)=\frac{1}{2N}(2i-1+4s)$.
Note that $F_i$ induces
$[\frac{1}{4},\frac{3}{4}]\simeq [u(i),u(i+1)]$.
We obtain the bundle
$F_i^{\ast}\bigl(\gammatilde^{\ast}V\bigr)$,
equipped with the frame $F_i^{\ast}(v_+,v_-)$
and the family of connections
$F_i^{\ast}\bigl(
\gammatilde^{\ast}(\nabla^t)
\bigr)$.
Because
$F_i^{\ast}(du)=\frac{2}{N}ds$,
we obtain
$\|F_i^{\ast}\nbigb_{j,j}\cdot \del_sF_i^{\ast}(u)
\|_{C^0([0,1])}\leq C_0$
and
\[
\bigl\|
 F_i^{\ast}\nbigb_{1,2}
 \cdot \del_sF_i^{\ast}(u)
 \bigr\|_{C^0([0,1])}
+ \bigl\|
  F_i^{\ast}\nbigb_{2,1}
  \cdot \del_sF_i^{\ast}(u)
 \bigr\|_{C^0([0,1])}
 \leq \epsilon_1.
\]
By applying Corollary \ref{cor;21.3.5.12}
to 
$F_i^{\ast}\bigl(\gammatilde^{\ast}V\bigr)$
with $F_i^{\ast}(v_+,v_-)$
and $F_i^{\ast}\bigl(
\gammatilde^{\ast}(\nabla^t)
\bigr)$,
we obtain Lemma \ref{lem;21.3.5.12}.
\end{proof}

\vspace{.1in}
We set
$\nbigctilde^t_i:=
(I+G^t_{i,1})^{-1}\nbigc^t_i
\cdot (I+G^t_{i,0})$.
Note that
$\Pi^t_i$ is represented by
$\nbigctilde^t_i$
with respect to the bases
$(v_+,v_-)_{|u(i)}$
and
$(v_+,v_-)_{|u(i+1)}$.
We set
\[
 \nbigd^t:=
 \nbigctilde^t_{N-1}\cdot
 \nbigctilde^t_{N-2}\cdot
 \cdots
 \cdot\nbigctilde^t_{1}\cdot
 \nbigctilde^t_0.
\]
Let $\Pi^t$ be the isomorphism
$\gammatilde^{\ast}(V)_{|0}
\simeq
\gammatilde^{\ast}(V)_{|1}$
obtained as the parallel transport of
$\gammatilde^{\ast}\nabla^t$.
Because
$\Pi^t=\Pi^t_{N-1}\circ \Pi^t_{N-2}\circ\cdots
\circ\Pi^t_1\circ\Pi^t_0$,
the isomorphism $\Pi^t$ is represented by
$\nbigd^t$
with respect to the bases
$(v_+,v_-)_{|0}$ and $(v_+,v_-)_{|1}$.
For any $1\leq k,\ell\leq 2$, we have
\[
 \lim_{t\to\infty}
 e^{t\int_{u(i)}^{u(i+1)}\gammatilde^{\ast}\omega}
 \cdot\nbigctilde^{t}_{k,\ell}
 =\left\{
\begin{array}{ll}
 \exp\bigl(-\int_{u(i)}^{u(i+1)}\nbigb_{1,1}\,du\bigr)
  & (\mbox{$(k,\ell)=(1,1)$})\\
 0 & (\mbox{otherwise}).
\end{array}
 \right.
\]
We obtain
\[
 \lim_{t\to\infty}
 e^{t\int_0^1\gammatilde^{\ast}\omega}
 \nbigd^{t}_{k,\ell}
 =\left\{
\begin{array}{ll}
 \exp\bigl(-\int_{0}^{1}\nbigb_{1,1}\,du\bigr)
  & (\mbox{$(k,\ell)=(1,1)$})\\
 0 & (\mbox{otherwise}).
\end{array}
 \right.
\]
Thus, we obtain the claim of Proposition \ref{prop;21.3.5.20}.
\hfill\qed

\section*{Acknowledgements}

IB is partially supported by a J. C. Bose Fellowship. SD was partially supported by the French government through the UCAJEDI Investments in the 
Future project managed by the National Research Agency (ANR) with the reference number 
ANR2152IDEX201. LH is supported by  the DFG grant HE
7914/2-1
of the DFG priority program SPP 2026 Geometry at Infinity.
SH is supported by the DFG grant HE 6829/3-1 of the DFG priority program SPP 2026 Geometry at Infinity.
TM is partially supported by
the Grant-in-Aid for Scientific Research (S) (No. 17H06127),
the Grant-in-Aid for Scientific Research (S) (No. 16H06335),
and the Grant-in-Aid for Scientific Research (C) (No. 20K03609),
Japan Society for the Promotion of Science. 
We would also like to thank Andrew Neitzke, Sebastian Schulz and Carlos Simpson for very helpful comments on WKB analysis. 

\end{document}